\documentclass[preprint,12pt]{elsarticle}

\usepackage{amsmath, amscd, amssymb, mathrsfs, esint}  
\usepackage{amsthm}                                    
\usepackage{color}                                     
\usepackage[colorlinks=true, linkcolor=blue, citecolor=blue, urlcolor=blue]{hyperref}   
\usepackage{graphicx}        

\usepackage[margin=1 in]{geometry} 
\usepackage{enumitem}              
\setlist[enumerate]{
    label=(\roman*),
    leftmargin=*,
    itemsep=1pt,
    topsep=3pt,
}                                  

\newtheorem{thm}{Theorem}[section]     
\newtheorem{lem}[thm]{Lemma}
\newtheorem{prop}[thm]{Proposition}
\newtheorem{coro}[thm]{Corollary}

\newtheorem{rem}[thm]{Remark}

\numberwithin{equation}{section}      

\def\R{\mathbb R}  

\begin{document}

\begin{frontmatter}



\title{Some Reverse Hardy--Littlewood--Sobolev Type Inequalities}


\author[nwpu]{Qianqiao Guo\corref{cor1}}
\ead{gqianqiao@nwpu.edu.cn}

\author[nwpu]{Zhe Pu}
\ead{puzhe@mail.nwpu.edu.cn}

\author[nwpu]{Jiankang Xia}
\ead{jiankangxia@nwpu.edu.cn}

\cortext[cor1]{Corresponding author}

\affiliation[nwpu]{organization={School of Mathematics and Statistics, Northwestern Polytechnical University},
   city={Xi'an},
    postcode={710129},
    state={Shaanxi},
    country={China}
}

\begin{abstract}

\noindent
	
We establish some sharp reverse Hardy--Littlewood--Sobolev (HLS) type inequalities on \(\mathbb{R}^n\) and \(\mathbb{R}_+^n\). Using an operator representation, we overcome the difficulty that the symmetric double-integral structure is unavailable in the half-space setting. 

On \(\mathbb{R}^n\), for \(1 \le n < \alpha\), \(\frac{n}{\alpha} < t < 1\), and \(0 < q < 1\), there holds for nonnegative \(f \) that
\[
\| E_\alpha f \|_{L^{t^\prime}(\mathbb{R}^n)}
\ge \mathscr{C}(n,\alpha,q,t)
\| f \|_{L^1(\mathbb{R}^n)}^{\gamma}
\| f \|_{L^q(\mathbb{R}^n)}^{1-\gamma},
\quad
\gamma := \frac{n - q\alpha - \frac{n}{t^\prime}q}{n(1-q)}
\]
for some $\mathscr{C}(n,\alpha,q,t)>0$
iff \(q > \frac{n}{\alpha}\), where \(E_\alpha\) is the extension operator with Riesz kernel and \(t^\prime\) is the conjugate of \(t\). The sharp constant is achieved when \(\frac{n t^\prime}{n + \alpha t^\prime} \le q < 1\). 

On \(\mathbb{R}_+^n\),  with \(2 \le n < \alpha\), \(\frac{n}{\alpha} < t < 1\), and \(0 < q < 1\), we show for nonnegative \(f \) that
\[
\| \widetilde{E}_\alpha f \|_{L^{t^\prime}(\mathbb{R}_+^n)}
\ge \widetilde{\mathscr{C}}(n,\alpha,q,t)
\| f \|_{L^1(\partial \mathbb{R}_+^n)}^{\widetilde{\gamma}}
\| f \|_{L^q(\partial \mathbb{R}_+^n)}^{1-\widetilde{\gamma}},
\quad
\widetilde{\gamma} := \frac{(n-1) - q(\alpha-1) - \frac{n}{t^\prime}q}{(n-1)(1-q)},
\]
for some $\widetilde{\mathscr{C}}(n,\alpha,q,t)>0$ iff \(q > \frac{n-1}{\alpha-1}\), where \(\widetilde{E}_\alpha\) is the extension operator with Poisson-type kernel. The sharp constant is achieved when \(\frac{t^\prime(n-1)}{n + t^\prime(\alpha-1)} \le q < 1\).

We further extend results to \(q\ge1\).

The proofs use rearrangement inequalities, the sharp Carlson--Levin inequality, and refined pointwise lower bounds for the Riesz and Poisson-type potentials. Our results unify and extend the classical reverse HLS inequalities, especially on \(\mathbb{R}_+^n\).
\end{abstract}



\begin{keyword}
Hardy--Littlewood--Sobolev type inequality \sep 
Logarithmic inequality \sep
Extension operator \sep Extremal function
 \sep Sharp constant

\MSC[2020] 35A23 \sep 26D15 \sep 42B37 \sep 47G10


\end{keyword}

\end{frontmatter}


\section{Introduction}

The classical sharp Hardy--Littlewood--Sobolev (HLS for short) inequality (\cite{1928-HaLi, 1930-HaLi, 1983-Lieb-A, 1963-Sobolev}) states that
\begin{equation}\label{ineq-class-HLS}
\left|\int_{\R^n} \int_{\R^n} f(x) |x-y|^{\alpha-n} g(y) dy dx \right|\le \mathscr{N}(n,\alpha,p)\|f\|_{L^p(\R^n)}\|g\|_{L^t(\R^n)}
\end{equation}
for all $f\in L^p(\R^n), \, g\in L^t(\R^n),$  where $\mathscr{N}(n,\alpha, p)>0, \ 1<p, \, t<\infty$, \ $0<\alpha <n$ and 
$$
\frac{1}{p} + \frac{1}{t}+ \frac{n-\alpha}{n}=2.
$$
Lieb \cite{1983-Lieb-A} proved the existence of extremal functions for the inequality. 
He also classified the extremal functions and computed the best constant in the conformal case $p=t$, as well as in the case $p=2$ or $t=2$, which can be reduced to the conformal case. 
Using the competing symmetry method, Carlen and Loss \cite{1990-CaLo-JF} provided a different proof to obtain the sharp constant and extremal functions in the conformal case.
Later, Frank and Lieb \cite{2010-FrLi-CV} gave another proof based on the reflection positivity of inversions in spheres.
In \cite{2012-FrLi-B}, Frank and Lieb further developed a rearrangement-free approach introduced in \cite{2012-FrLi-A} to derive the sharp constant in \eqref{ineq-class-HLS}.

The HLS inequality \eqref{ineq-class-HLS} plays a fundamental role in geometry and analysis. 
For instance, the sharp HLS inequality implies the Moser--Trudinger--Onofri inequality and Beckner inequalities \cite{1993-Beckner-A}, which are closely related to the prescribing curvature problems. 
It also implies Sobolev inequality and Gross's logarithmic Sobolev inequality \cite{1976-Gross}.

Over the past decades, the sharp HLS inequality has been extended in several directions.
We briefly review several developments that are most closely related to the present work.

One direction concerns the extension of the classical exponent range to $\alpha>n$, leading to the so-called reverse HLS inequality. This inequality was established independently by Beckner \cite{2015-Beckner}, Dou and Zhu \cite{2015-DouZhu-IM-R}.
More precisely, let $1 \le n < \alpha$ and $0<p,t<1$ satisfy $\frac{1}{p} + \frac{1}{t} + \frac{n-\alpha}{n} = 2$. Then
\begin{equation}\label{ineq-Re-HLS}
\int_{\R^n} \int_{\R^n}f(x) |x-y|^{\alpha-n}g(y) dy dx 
\ge 
\mathscr{N}(n,\alpha, p)\|f\|_{L^p(\R^n)}\|g\|_{L^t(\R^n)}
\end{equation}
for all nonnegative functions $f\in L^p(\R^n), \, g\in L^t(\R^n)$, where $\mathscr{N}(n,\alpha, p)>0$. 
Here by convention, for $p\neq 0$, we write $f\in L^p(\mathbb{R}^n)$ if $\int_{\mathbb{R}^n}|f(x)|^p\,dx<\infty$, and define $\|f\|_{L^p(\mathbb{R}^n)} := \left(\int_{\mathbb{R}^n}|f(x)|^p\,dx\right)^{1/p}$, which is only a quasi-norm when $0<p<1$, and fails even to be positive definite when $p<0$. 
The above inequality \eqref{ineq-Re-HLS} is also equivalent to 
\begin{equation} \label{ineq-Re-HLS-operator}
\| E_\alpha f \|_{L^{t^\prime}(\mathbb{R}^n)} 
\ge \mathscr{N}(n,\alpha,p)\|f\|_{L^p(\mathbb{R}^n)},
\end{equation}
where $E_\alpha$ denotes the extension operator with Riesz kernel
\[
E_\alpha f(x) := \int_{\mathbb{R}^n} |x-y|^{\alpha-n}f(y) dy, \ \ x \in \mathbb{R}^n.
\]
Throughout this paper, $t$ and $t^\prime$ denote conjugate exponents satisfying $1/t+1/t^\prime=1$. 

Another line of research concerns extending the sharp inequality to more general geometric settings.
For example, HLS-type inequalities have been established on the Heisenberg group by Frank and Lieb \cite{2012-FrLi-A}, on compact Riemannian manifolds by Han and Zhu \cite{2016-Han-Zhu-JDE}, and on the upper half-space $\mathbb{R}^n_+:=\{ x=(x_1,\dots,x_n)\in\mathbb{R}^n : x_n>0 \}$ by Dou and Zhu \cite{2015-DouZhu-IM-H}.  
Moreover, the reverse HLS inequality on the upper half-space  was indicated in \cite{2015-DouZhu-IM-R, 2015-DouZhu-IM-H} and also proved by Ngô and Nguyen \cite{2017-NgoNgu-RH} via a different method, which states that
\begin{equation}\label{ineq-Re-HLS-operator-half-space} 
\| \widetilde E_\alpha f \|_{L^{t^\prime}(\mathbb{R}^n_+)} \ge \widetilde{\mathscr{N}}(n,\alpha,p) \| f \|_{L^p(\partial \mathbb{R}^n_+)},
\end{equation}
for any nonnegative $f \in L^p(\partial \mathbb{R}^n_+)$, where $\widetilde{\mathscr{N}}(n,\alpha,p)>0$, $2\le n<\alpha,$ $0<p,t<1$ satisfying
\[
\frac{n-1}{n} \cdot \frac{1}{p} + \frac{1}{t} + \frac{n-\alpha+1}{n} = 2,
\]
 $\widetilde E_\alpha$ denotes the extension operator with Poisson-type kernel
\[
\widetilde{E}_\alpha f(x) := \int_{\partial \mathbb{R}^n_+} |x-y|^{\alpha-n}f(y) \, dy, \ \  x = (x', x_n) \in \mathbb{R}^n_+.
\]

The double weighted HLS inequality, known as the Stein--Weiss inequality, was established by Stein and Weiss \cite{1958-Stein-Weiss} on $\mathbb R^n$.  
The existence of extremal functions for the Stein--Weiss inequality was obtained by Lieb \cite{1983-Lieb-A}. 
We also refer to \cite{2008-Chen-Li-PAMS, 2008-Beckner, 2012-Han-Lu-Zhu-NA, 2016-DouJB-CCM, 2019-Chen-Lu-Tao-ANS, 2019-Chen-Lu-Tao-JFA} for further developments. 
Another type of extension replaces the Riesz kernel by the Poisson-type kernel. 
On the upper half-space $\mathbb R^n_+$, one can consider HLS-type inequalities involving Poisson-type kernel 
$$
K_{\alpha,\beta}(x',x_{n})
:=\frac{x_{n}^{\beta}}{(|x'|^{2}+x_{n}^{2})^{\frac{n-\alpha}{2}}},
\quad x=(x',x_{n})\in\mathbb{R}^n_+,
$$
for which we refer to
\cite{2008-HangFB-WangXD-YanXD-CPAM, 2014-ChenSB-IMRN, 2017-DouJB-GuoQQ-ZhuMJ-AD, 2019-Chen-Lu-Tao-AMSES, 2020-Gluck-JFA, 2023-LiuZ, 2023-Dai-Hu-Liu}
and the references therein.

Recently, a new family of reverse HLS inequalities was obtained by Carrillo et al. in \cite{2019-Carrillo-Delgadino-Dolbeault-Frank-Hoffmann-JMPA}, motivated by the analysis of nonnegative solutions to certain Keller--Segel type equations.
That is, for $1\le n<\alpha$, $0<q<1$ and $\overline{\gamma}:=\frac{2n-q(n+\alpha)}{n(1-q)}$, the inequality
\begin{equation}\label{ineq-CDDFH}  
\int_{\mathbb R^n}\int_{\mathbb R^n}f(x)|x-y|^{\alpha-n}f(y)dxdy
\ge \mathscr{F}(n,\alpha, q)\|f\|^{\overline{\gamma}}_{L^1(\mathbb R^n)}\|f\|^{2-\overline{\gamma}}_{L^q(\mathbb R^n)}
\end{equation}
holds for all nonnegative $f\in L^1 \cap L^q(\R^n)$. 
Obviously, by using the reverse HLS inequality \eqref{ineq-Re-HLS} (for the conformal case) and H\"{o}lder's inequality, the inequality \eqref{ineq-CDDFH} holds for $\frac{2n}{n+\alpha}<q<1$. 
It is interesting that Carrillo et al. \cite{2019-Carrillo-Delgadino-Dolbeault-Frank-Hoffmann-JMPA} show that the inequality holds for $\mathscr{F}(n,\alpha, q)>0$ if and only if $\frac{n}{\alpha}<q<1$; and if either $n=1, 2$ or if $n\ge 3$ and $q\ge \min \{1-\frac{2}{n}, \frac{2n}{n+\alpha} \}$, then there is a radial positive, non-increasing, bounded function which achieves the equality case. 
Similar results for the classical HLS inequality can also be seen in \cite{2017-Calvez-Carrillo-Hoffmann} and \cite{2025-ZhangXQ-JMAA}.

Motivated by \cite{2019-Carrillo-Delgadino-Dolbeault-Frank-Hoffmann-JMPA}, it is quite natural to establish analogues of \eqref{ineq-CDDFH} on the upper half-space $\mathbb{R}^n_+$.
However, a main difficulty arises from the fact that the symmetric double-integral structure present in \cite{2019-Carrillo-Delgadino-Dolbeault-Frank-Hoffmann-JMPA} is no longer available in the half-space setting. 
This observation leads us to consider the reverse HLS inequality in its operator form.

We first establish inequalities on $\mathbb{R}^n$, which are analogues of \eqref{ineq-CDDFH} based on the reverse HLS inequality \eqref{ineq-Re-HLS-operator}, to highlight the main ideas without being obscured by additional technical difficulties. 
Then we consider the cases on the upper half-space $\mathbb{R}^n_+$, analogues of \eqref{ineq-CDDFH} based on the reverse HLS inequality \eqref{ineq-Re-HLS-operator-half-space}.

Heuristically, by \eqref{ineq-Re-HLS-operator} and H\"{o}lder's inequality, for $\frac{n}{\alpha}<p\le q<1$ we have
\begin{align}\label{relation-general-t}
\| E_\alpha f \|_{L^{t^\prime}(\mathbb{R}^n)} 
&\ge \mathscr{N}(n,\alpha,p) \|f\|_{L^p(\mathbb{R}^n)} \nonumber\\
&\ge \mathscr{N}(n,\alpha,p) 
\|f\|^{\gamma}_{L^1(\mathbb R^n)}
\|f\|^{1-\gamma}_{L^q(\mathbb R^n)},
\end{align}
where
\begin{equation} \label{def-gamma}
\gamma :=\frac{n-q\alpha-\frac{n}{t^\prime}q}{n(1-q)}
\end{equation}
is determined by scaling and homogeneity. 
It is therefore natural to ask whether the inequality \eqref{relation-general-t}  remains valid in a larger range of exponents.

The first main result gives an affirmative answer to this question.

\begin{thm}\label{them-Rn}    
Assume that $1\le n<\alpha$, $\frac{n}{\alpha}<t<1$, $0<q<1$ and $\gamma$ is defined as above. 
Then for any nonnegative $f\in L^1(\R^n)\cap L^q(\R^n)$, there holds
\begin{equation}\label{ineq-main-Rn}   
\|E_\alpha f\|_{L^{t^\prime}(\mathbb{R}^n)}
\ge
\mathscr{C}(n,\alpha,q,t)
\|f\|_{L^1(\mathbb{R}^n)}^\gamma
\|f\|_{L^q(\mathbb{R}^n)}^{1-\gamma},
\end{equation}
for some positive constant $\mathscr{C}(n,\alpha,q,t)$ if and only if $\frac{n}{\alpha}<q<1$. 
Moreover, in the range $\frac{nt^\prime}{n+\alpha t^\prime}\le q<1$, the sharp constant $\mathscr{C}(n,\alpha,q,t)$ is achieved by a nontrivial, radially symmetric, non-increasing function which is positive almost everywhere.
\end{thm}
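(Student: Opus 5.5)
The proof has three parts: necessity, sufficiency, and existence of extremals, following the strategy of Carrillo et al. \cite{2019-Carrillo-Delgadino-Dolbeault-Frank-Hoffmann-JMPA} but adapted to the operator form.

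\emph{Necessity.} Suppose $q\le \frac{n}{\alpha}$. Take $f_\varepsilon = g + \varepsilon^{?}\,\delta$-approximations, i.e. add to a fixed nice function $g$ a mass $m$ concentrated in a ball of radius $\varepsilon$: $h_\varepsilon = m\varepsilon^{-n}\mathbf 1_{B_\varepsilon}$. Then $\|h_\varepsilon\|_{L^q}^q = m^q \varepsilon^{n(1-q)}|B_1|\to 0$, so $\|g+h_\varepsilon\|_q\to \|g\|_q$ while $\|g+h_\varepsilon\|_1 = \|g\|_1+m$ and $E_\alpha(g+h_\varepsilon)\to E_\alpha g + m|x|^{\alpha-n}$. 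The cleaner route is to use a family whose $L^1$ mass stays fixed but whose $L^q$ norm blows up while $\|E_\alpha f\|_{t'}$ stays bounded. For example, spread mass: $f_R=R^{-n}\mathbf 1_{B_R}\cdot$ profile with heavy tail $f(x)=(1+|x|)^{-\beta}$ with $\beta$ slightly above $n/q$ chosen so that $f\in L^q$ only barely. Then a tail $|x|^{-\beta}$ with $n<\beta\cdot$... Concretely, I will test $f_\beta(x)=(1+|x|)^{-\beta}$ with $\beta\downarrow n/q$ (so $\|f\|_q\to\infty$, $\|f\|_1$ bounded since $n/q>\alpha\ge n$... wait, $q\le n/\alpha$ gives $n/q\ge\alpha>n$). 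Then $E_\alpha f_\beta$ is finite iff $\beta>\alpha$, and $\|E_\alpha f_\beta\|_{t'}$ remains bounded as $\beta\to n/q$ when $n/q>\alpha$, with care at the endpoint $q=n/\alpha$ (logarithmic divergence must be compared). Since $1-\gamma>0$ whenever the right side is meaningful, the ratio tends to $0$, so no positive constant exists. The endpoint $q=n/\alpha$ requires comparing the log divergence of $\|f\|_q$ with that of $E_\alpha f$.

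\emph{Sufficiency.} By Riesz rearrangement (in operator form, via duality $\|E_\alpha f\|_{t'}=\inf_{\|g\|_t=1}\int gE_\alpha f$ for $t'<0$) I reduce to $f$ radial nonincreasing. By homogeneity and scaling I normalize $\|f\|_1=\|f\|_q=1$. The key pointwise lower bound is $|x-y|^{\alpha-n}\ge c(|x|^{\alpha-n}+|y|^{\alpha-n})$ for $\alpha\ge n$, giving
\[
E_\alpha f(x)\ge c\Bigl(|x|^{\alpha-n}\|f\|_1+\int|y|^{\alpha-n}f\,dy\Bigr).
\]
Thus $\|E_\alpha f\|_{t'}$ (negative exponent, so controlled from below by the size of the constant term) is bounded below by a function of $M:=\int |y|^{\alpha-n}f$, computed explicitly by scaling: $\|(A+|x|^{\alpha-n})\|_{t'}\sim A^{\,1+n/((\alpha-n)t')}$, finite because $t>n/\alpha$. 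It remains to bound $M$ below in terms of $\|f\|_1,\|f\|_q$: this is a Carlson--Levin type inequality $\int f\le C\|f\|_q^{\theta}(\int|y|^{\alpha-n}f)^{1-\theta}$... rather, the needed direction follows from H\"older applied to $\int f = \int f\,\mathbf 1_{B_R}+\int_{B_R^c} f$, with the first term $\le \|f\|_q |B_R|^{1-1/q}$? (no: $q<1$, so instead use radial monotonicity $f(x)\le C|x|^{-n/q}\|f\|_q$), and the second term $\le R^{n-\alpha}M$. Using $f(r)\le Cr^{-n/q}$ on $B_R$ is integrable near $0$ only if $n/q<n$, false; so I will instead use $f\le \|f\|_1|B_r|^{-1}$ near zero and the $L^q$ bound on $f$ to control where the mass sits, optimizing $R$ and requiring $q>n/\alpha$ for the exponents to work. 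This bookkeeping is where I expect the main obstacle, including matching the exact $\gamma$.

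\emph{Extremals.} For $q\ge \frac{nt'}{n+\alpha t'}$, take a minimizing sequence of radial nonincreasing functions normalized by $\|f_k\|_1=\|f_k\|_q=1$. Monotonicity gives uniform pointwise bounds $f_k(r)\le C\min(r^{-n},r^{-n/q})$, and Helly's theorem provides a pointwise a.e. limit $f$. The pointwise lower bound above, together with the lower bound on $M$, prevents mass escaping to infinity. Fatou gives $\|f\|_q\le1$ and $\|f\|_1\le 1$, while $E_\alpha f_k\to E_\alpha f$ handles the left side. The threshold on $q$ is used to exclude concentration at the origin. Concentration would lose $L^1$ mass into a Dirac delta, and the limit functional would then involve $E_\alpha f+m|x|^{\alpha-n}$. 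I would show, by a first-variation comparison, that spreading the Dirac mass strictly lowers the quotient precisely when $q\ge \frac{nt'}{n+\alpha t'}$, i.e. exactly when the tested extremal lies in $L^1$. Positivity a.e. follows from the Euler--Lagrange equation, whose right side $E_\alpha^*((E_\alpha f)^{t'-1})$ is strictly positive.
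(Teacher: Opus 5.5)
Your necessity argument for $q<\frac n\alpha$ is correct and simpler than the paper's concentration construction. The mass and the moment of $f_\beta$ stay bounded, so $E_\alpha f_\beta\le C(1+|x|^{\alpha-n})$, while $\|f_\beta\|_{L^q}\to\infty$. The endpoint you leave open also closes with $\beta\downarrow\alpha$: $M_\beta:=\int|y|^{\alpha-n}f_\beta$ and $\|f_\beta\|_{L^q}^q$ both grow like $(\beta-\alpha)^{-1}$, and $\|E_\alpha f_\beta\|_{L^{t'}}\lesssim M_\beta^{1-\gamma}$, so the quotient is $O\bigl((\beta-\alpha)^{(1-\gamma)(\frac\alpha n-1)}\bigr)$. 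The paper uses log-corrected tails there instead.

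The sufficiency part, however, has two gaps.
\begin{enumerate}
\item The kernel bound $|x-y|^{\alpha-n}\ge c(|x|^{\alpha-n}+|y|^{\alpha-n})$ is false (take $y=x$). The lower bound on $E_\alpha f$ must use that $f$ is radially non-increasing. The paper splits at $|y|=\frac{|x|}2$ and $|y|=\frac{3|x|}2$. It controls the mass of the middle shell by that of its inner part $\frac{|x|}2\le|y|<\frac{3|x|}4$, where $|x-y|\ge\frac{|x|}4$, which gives $E_\alpha f(x)\ge C\|f\|_{L^1}|x|^{\alpha-n}$. It gets the constant term from $E_\alpha f(x)\ge E_\alpha f(0)$, since $E_\alpha f^*$ is radially increasing (Lemma~\ref{lemma-rearrangement}).
\item More seriously, the lower bound on $M=\int|y|^{\alpha-n}f$ is left open, and this is the only place where $q>\frac n\alpha$ enters. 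The splitting of $\int f$ you attempt cannot work: for $q<1$ nothing bounds $\int_{B_R}f$ by $\|f\|_{L^q}$. Indeed, almost all the mass may sit near the origin while a small, widely spread part carries both the $L^q$ norm and the moment. You must split $\int f^q$ instead:
\begin{itemize}
\item $\int_{B_R}f^q\le|B_R|^{1-q}\|f\|_{L^1}^q$;
\item $\int_{B_R^c}f^q\le M^q\bigl(\int_{|y|>R}|y|^{-\frac{(\alpha-n)q}{1-q}}dy\bigr)^{1-q}$, where the last integral is finite iff $q>\frac n\alpha$.
\end{itemize}
Optimizing in $R$ gives $M\ge c\,\|f\|_{L^1}^{1-a}\|f\|_{L^q}^{a}$ with $a=\frac{q(\alpha-n)}{n(1-q)}$. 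This is the Carlson--Levin inequality (Lemma~\ref{lemma-CLI} with $\lambda=\alpha-n$), which the paper simply cites. Combined with your scaling of $\|A+B|x|^{\alpha-n}\|_{L^{t'}}$, it yields exactly $\|f\|_{L^1}^{\gamma}\|f\|_{L^q}^{1-\gamma}$.
\end{enumerate}

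In the existence part you misidentify the role of the threshold, and the estimate actually needed is missing. The condition $q\ge\frac{nt'}{n+\alpha t'}$ is exactly $\gamma\le0$, and its only role is to make loss of $L^1$ mass harmless. With $\|f_j\|_{L^1}=\|f_j\|_{L^q}=1$ and $f_j\to f_0$ a.e., $\|f_0\|_{L^1}\le1$ gives $\|f_0\|_{L^1}^{-\gamma}\le1$. So concentration need not be excluded and no first-variation argument is needed. The concentrated mass is simply dropped, which lowers $E_\alpha$ pointwise and hence, since $t'<0$, lowers the $L^{t'}$ quasi-norm. For the numerator you only need $\|E_\alpha f_0\|_{L^{t'}}\le\lim_j\|E_\alpha f_j\|_{L^{t'}}$. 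This follows from Fatou ($E_\alpha f_0\le\liminf_j E_\alpha f_j$) plus dominated convergence with the uniform bound $E_\alpha f_j\ge\mathscr C_0\bigl(|x|^{\alpha-n}+\mathscr L(n,\alpha-n,q)\bigr)$, whose $t'$-th power is integrable. The convergence $E_\alpha f_k\to E_\alpha f$ you assert fails under concentration, as you yourself observe.

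What must be shown is $\|f_0\|_{L^q}=1$. Fatou's $\|f_0\|_{L^q}\le1$ points the wrong way since $1-\gamma>0$, and $f_j\le C\min\{r^{-n/q},r^{-n}\}$ does not stop the $L^q$ quasi-norm from leaking to infinity. The paper applies the rough inequality at some $s\in(\frac n\alpha,q)$ to bound $\|f_j\|_{L^s}$ uniformly. Then $f_j\le C\min\{r^{-n/s},r^{-n}\}\in L^q$, and dominated convergence gives $\|f_0\|_{L^q}=1$. Your remark that the moment prevents escape can also be made to work. It needs an \emph{upper} bound on $\int|y|^{\alpha-n}f_j$, which follows from boundedness of $\|E_\alpha f_j\|_{L^{t'}}$, combined with the tail H\"older estimate above.

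Finally, the Euler--Lagrange equation holds only on $\{f>0\}$ and cannot by itself give positivity. The same positive term $E_\alpha\bigl((E_\alpha f)^{t'-1}\bigr)$ appears for $q>1$, where minimizers have compact support. What is needed is the one-sided perturbation $f+\varepsilon\mathbf 1_E$ on a set $E$ where $f=0$. It raises $\|f\|_{L^q}^q$ by $\varepsilon^q|E|$ while changing the other factors only by $O(\varepsilon)$, so it strictly lowers the quotient when $q<1$.
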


\begin{rem}
The assumption $\frac{n}{\alpha}<t<1$ is necessary. 
Indeed, suppose that $0 < t \le \frac{n}{\alpha}$. 
Let $f \in C_c^\infty(\mathbb{R}^n)$ be nonnegative. 
Then
\[
E_\alpha f(x)
=
\int_{\mathbb{R}^n}
|x-y|^{\alpha-n}f(y)\,dy
\sim
|x|^{\alpha-n}
\int_{\mathbb{R}^n} f(y)\,dy
\]
as $|x| \to \infty$.
Since $ t^\prime (n-\alpha)\le n$, we have 
\[
\int_{\mathbb{R}^n}
\bigl(E_\alpha f(x)\bigr)^{t^\prime}\,dx
=
\infty.
\]
Consequently, $\|E_\alpha f\|_{L^{t^\prime}(\R^n)}=0$, and therefore no meaningful reverse estimate of the form
\eqref{ineq-main-Rn} can hold in this regime. 
\end{rem}

\begin{rem} 
\begin{enumerate}
\item 
Let $q=p=\frac{nt^\prime}{n+\alpha t^\prime}$, that is, \(\gamma=0\). Then \eqref{ineq-main-Rn} reduces to the reverse HLS inequality \eqref{ineq-Re-HLS-operator}.


\item The inequality \eqref{ineq-main-Rn} also implies the reverse HLS-type inequality  \eqref{ineq-CDDFH} obtained by Carrillo  et al.~\cite{2019-Carrillo-Delgadino-Dolbeault-Frank-Hoffmann-JMPA}.
Indeed, for any nonnegative $f\in L^1(\R^n)\cap L^q(\R^n)$ with $q\in \left( \frac{n}{\alpha},1 \right)$, choosing $t=q$ and applying H\"older's inequality together with \eqref{ineq-main-Rn}, we obtain
\begin{align*}
\int_{\mathbb R^n}\int_{\mathbb R^n}f(x)|x-y|^{\alpha-n}f(y)\,dxdy
&\ge 
\|E_\alpha f\|_{L^{q^\prime}(\R^n)}
\|f\|_{L^{q}(\R^n)} \\
&\ge 
\mathscr{C}(n,\alpha,q,q)
\|f\|_{L^{1}(\R^n)}^\gamma
\|f\|_{L^{q}(\R^n)}^{1-\gamma}
\|f\|_{L^{q}(\R^n)} \\
&=
\mathscr{C}(n,\alpha,q,q)
\|f\|_{L^{1}(\R^n)}^{\overline{\gamma}}
\|f\|_{L^{q}(\R^n)}^{2-\overline{\gamma}}.
\end{align*}
\end{enumerate}
\end{rem}

Theorem \ref{them-Rn} establishes a necessary and sufficient condition for the validity of the inequality \eqref{ineq-main-Rn}, namely, $\frac{n}{\alpha} < q < 1$.
Regarding the existence of optimizers, the theorem proves the sufficient condition $\frac{n t^\prime}{n + \alpha t^\prime} \le q < 1$, while the necessity remains open.

The optimizers of \eqref{ineq-main-Rn} have been explicitly characterized in the conformally invariant case $q=t=\frac{2n}{n+\alpha}$ in \cite{2015-Beckner, 2015-DouZhu-IM-R, 2017-NgoNgu-R} and are given by translations, dilations and constant multiples of 
$$
f(x)=\left(1+|x|^{2}\right)^{-\frac{n+\alpha}{2}}.
$$
Consequently, the optimal constant in \eqref{ineq-main-Rn} is
$$
{\mathscr{C}}\left(n,\alpha,\frac{2n}{n+\alpha},\frac{2n}{n+\alpha}\right)
=
{\mathscr{N}}\left( n,\alpha,\frac{2n}{n+\alpha} \right)
=
\pi^{\frac{n-\alpha}{2}}
\frac{\Gamma(\frac{\alpha}{2})}{\Gamma(\frac{n+\alpha}{2})}
\left(\frac{\Gamma(n)}{\Gamma(\frac{n}{2})}\right)^{\frac{\alpha}{n}}.
$$
As it will be shown in Section \ref{Sec-Rn}, one can also characterize the optimizers in the special case $\alpha=n+2$.
If $\frac{n}{n+2}<q,t<1$, then the optimizers for (\ref{ineq-main-Rn}) are given by translations, dilations and constant multiples of 
$$
f(x)=\left(1+|x|^{2}\right)^{-\frac{1}{1-q}}.
$$
And the sharp constant
\[
\scalebox{0.91}{$\displaystyle
\mathscr{C}(n,n+2,q,t)
=
\frac{1}{\pi}
\left(
\frac{n(1-q)}{q(n+2)-n}
\left(\frac{q(n+2)-n}{2q}\right)^{\frac{2}{n(1-q)}}
\left(\frac{\Gamma\!\left(\frac{1}{1-q}\right)}{\Gamma\!\left(\frac{1}{1-q}-\frac{n}{2}\right)}\right)^{\frac{2}{n}}
\right)^{1+\frac{n}{2t^\prime}}
\left(
\frac{\Gamma\!\left(-t^\prime-\frac{n}{2}\right)}{\Gamma(-t^\prime)}
\right)^{\frac{1}{t^\prime}}
$}.
\]

It is natural to ask whether an analogue of \eqref{ineq-main-Rn} continues to hold in the case \(q\ge 1\). 
The following theorem gives an affirmative answer for \(q>1\).

\begin{thm}\label{them-Rn-q-bigger-1}     
Assume that $1\le n<\alpha$ and $\frac{n}{\alpha}<t<1<q$. Let $\gamma>1$ be defined as \eqref{def-gamma}.
Then there holds
\begin{equation}\label{ineq-main-Rn-q-bigger-1}  
	\|E_\alpha f\|_{L^{t^\prime}(\R^n)} \|f\|^{\gamma-1}_{L^q(\R^n)} \geq \mathscr{C}(n,\alpha,q,t)\|f\|^\gamma_{L^1( \R^n)}
\end{equation}
for any nonnegative function $ f\in L^1(\R^n) \cap L^q(\R^n)$, for some positive constant $\mathscr{C}(n,\alpha,q,t)$. 
Moreover, there exists a radially symmetric, non-increasing, bounded function with compact support such that the sharp constant  $\mathscr{C}(n,\alpha,q,t)$ is achieved. 
\end{thm}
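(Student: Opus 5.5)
The plan is to reduce to radially symmetric non-increasing functions, bound $E_\alpha f$ from below by an explicit radial profile, and then control the resulting weighted moment by $\|f\|_{L^1}$ and $\|f\|_{L^q}$.

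\textbf{Reduction to radial functions.} Since $\frac{n}{\alpha}<t<1$, we have $t^\prime<0$, and the reverse H\"older inequality gives the dual formula
\[
\|E_\alpha f\|_{L^{t^\prime}}=\inf\Big\{\int g\,E_\alpha f:\ g\ge0,\ \|g\|_{L^t}=1\Big\}.
\]
The kernel $|x-y|^{\alpha-n}$ is increasing in $|x-y|$. The reverse Riesz rearrangement inequality, as used for \eqref{ineq-Re-HLS}, then gives $\int g\,E_\alpha f\ge\int g^*E_\alpha f^*$, while $\|g^*\|_{L^t}=\|g\|_{L^t}$. Hence $\|E_\alpha f\|_{L^{t^\prime}}\ge\|E_\alpha f^*\|_{L^{t^\prime}}$. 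Both $\|f\|_{L^1}$ and $\|f\|_{L^q}$ are invariant under rearrangement, so it suffices to treat radial non-increasing $f$.

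\textbf{Pointwise lower bound.} For radial non-increasing $f$ I will use a bound of the form
\[
E_\alpha f(x)\ge c\big(|x|^{\alpha-n}M+A\big),\qquad M=\|f\|_{L^1},\quad A=\int|y|^{\alpha-n}f(y)\,dy.
\]
This follows from $|x-y|^{\alpha-n}\ge c\max(|x|,|y|)^{\alpha-n}$ on a fixed fraction of directions, combined with the radial symmetry of $f$. Because $t^\prime(\alpha-n)<-n$, which is equivalent to $t>n/\alpha$, the function $(M r^{\alpha-n}+A)^{t^\prime}$ is integrable. Scaling $r=(A/M)^{1/(\alpha-n)}s$ then gives
\[
\|E_\alpha f\|_{L^{t^\prime}}\ge c\,A^{\theta}M^{1-\theta},\qquad \theta=1+\frac{n}{(\alpha-n)t^\prime}\in(0,1).
\]

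\textbf{Moment bound.} Next I bound the moment $A$ from below by a bathtub-type argument. For any $R>0$, H\"older's inequality (valid since $q>1$) gives
\[
M\le|B_R|^{1-1/q}\|f\|_{L^q}+R^{n-\alpha}A.
\]
Choosing $R$ so that $|B_R|^{1-1/q}\|f\|_{L^q}=M/2$ yields $A\ge c\,M^{1+\frac{(\alpha-n)q}{n(q-1)}}\|f\|_{L^q}^{-\frac{(\alpha-n)q}{n(q-1)}}$. Inserting this into the previous bound, the exponents must reproduce \eqref{ineq-main-Rn-q-bigger-1} by scaling and homogeneity; this can be checked directly against \eqref{def-gamma}. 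This proves the inequality.

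\textbf{Existence of an optimizer.} Take a minimizing sequence of radial non-increasing $f_k$. Using dilations and multiplication by constants, normalize $\|f_k\|_{L^1}=\|f_k\|_{L^q}=1$. Then $f_k(r)\le Cr^{-n/q}$, and Helly's theorem gives a pointwise limit $f$. The $L^q$ bound with $q>1$ prevents mass from concentrating at the origin. Bounded functional values bound $A_k$ through the estimate above, and this gives tightness, so $\|f\|_{L^1}=1$. Fatou's lemma gives $\|f\|_{L^q}\le1$, which is the favourable direction because the exponent $\gamma-1$ is positive. For the extension term, Fatou gives $E_\alpha f\le\liminf_k E_\alpha f_k$, hence $(E_\alpha f)^{t^\prime}\ge\limsup_k(E_\alpha f_k)^{t^\prime}$ pointwise. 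The $f_k$ satisfy the uniform domination
\[
(E_\alpha f_k)^{t^\prime}\le c\big(|x|^{\alpha-n}+a_0\big)^{t^\prime},
\]
where $a_0>0$ is a uniform lower bound for $A_k$, coming from the moment bound. Reverse Fatou then gives $\int(E_\alpha f)^{t^\prime}\ge\limsup_k\int(E_\alpha f_k)^{t^\prime}$, i.e. $\|E_\alpha f\|_{L^{t^\prime}}\le\liminf_k\|E_\alpha f_k\|_{L^{t^\prime}}$. So $f$ is a minimizer.

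\textbf{Boundedness and compact support.} I will obtain these from the Euler--Lagrange equation. With $U=E_\alpha\big((E_\alpha f)^{t^\prime-1}\big)$, it should take the form
\[
f=\big(\lambda_1-\lambda_2\,U\big)_+^{1/(q-1)}
\]
for constants $\lambda_1,\lambda_2>0$. The potential $U$ is positive and grows like $|x|^{\alpha-n}$, which forces compact support; it is also bounded below, which makes $f$ bounded. I expect this step to be the main obstacle. The equation has to be derived carefully, because the constraint set involves the sign condition $f\ge0$, and the variations must be taken one-sided on $\{f=0\}$. This one-sidedness is exactly what produces the positive part in the formula.
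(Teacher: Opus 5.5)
Your proof is correct, but you obtain the inequality by a different route than the paper does. The paper redoes no potential estimate for $q>1$. It uses the moment bound and the Carlson--Levin inequality only to get $f\in L^s$ for an auxiliary $s\in(\frac n\alpha,1)$. It then applies the already proved $q<1$ case (Theorem~\ref{them-Rn}) at exponent $s$, and recovers $\|f\|_{L^1}$ from $\|f\|_{L^s}$ and $\|f\|_{L^q}$ by H\"older, via the identity $\gamma(q)-1=(1-\gamma(s))\frac{q(1-s)}{s(q-1)}$.

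Your argument (pointwise lower bound plus a lower bound on the moment $A$) is the alternative the paper only mentions in the remark after the proof, with the sharp inequality \eqref{CLI+} replaced by your elementary H\"older/bathtub estimate. Your averaging over a fixed fraction of directions gives $E_\alpha f(x)\ge c\bigl(|x|^{\alpha-n}\|f\|_{L^1}+A\bigr)$; for instance $c=\frac14$, using the hemisphere $y\cdot x\le 0$. This uses radial symmetry alone, and is simpler than the paper's three-region decomposition combined with the monotonicity of $E_\alpha f^*$.

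What each route buys: yours is self-contained and does not depend on Theorem~\ref{them-Rn}. The paper's yields $\mathscr C(n,\alpha,q,t)\ge\mathscr C(n,\alpha,s,t)$ uniformly in $q>1$, which it needs later for the limit $q\to1^+$ in Theorem~\ref{them-Rn-q=1-log}. By contrast, the constant from your bathtub step tends to $0$ as $q\to1^+$.

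For existence, the paper only refers to Proposition~\ref{prop-Minimizer}, although for $q>1$ the roles of the two norms are interchanged. Now $\|f\|_{L^1}$ carries the negative exponent, so no mass may be lost, while Fatou for $\|f\|_{L^q}$ is the favourable direction. You handle this correctly with the no-concentration argument from the $L^q$ bound, tightness from the uniform bound on $A_k$, and the reverse Fatou lemma under the domination furnished by $A_k\ge a_0$. The last of these is a clean substitute for the paper's Helly-type Step~2.

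The Euler--Lagrange step matches the paper's. Your point that one-sided variations on $\{f=0\}$ produce the positive part is the right justification for a formula the paper asserts only ``in the interior of the support''.
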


At the endpoint \(q=1\), the inequality takes the following logarithmic form.
\begin{thm}\label{them-Rn-q=1-log}    
Assume that $1\le n<\alpha$ and $\frac{n}{\alpha}<t<1$. Then there holds
\begin{equation}\label{ineq-main-Rn-log}
\int_{\R^n} f \log f \, dx
+
\frac{n}{\alpha-\frac{n}{t}}
\log\left(
\frac{\|E_\alpha f\|_{L^{t^\prime}(\R^n)}}{\mathscr C(n,\alpha,1,t)}
\right)
\ge 0,
\end{equation}
for any nonnegative function $f \in L^1(\R^n)$ such that $\int_{\R^n} f(x)\,dx = 1$ and $f \log f \in L^1(\R^n)$, for some positive constant $\mathscr{C}(n,\alpha,1,t)$.
Moreover, there exists a radially symmetric, non-increasing, bounded function such that the equality holds.
\end{thm}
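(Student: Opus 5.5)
The plan is to obtain \eqref{ineq-main-Rn-log} as the limit $q\to1^-$ of Theorem~\ref{them-Rn}, in the spirit of the derivation of logarithmic HLS inequalities from their power-type versions. The existence of an optimizer will then be handled by a separate direct minimization.

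\textbf{The limiting argument.} Fix $f\ge0$ with $\int f=1$ and $f\log f\in L^1$. Write \eqref{ineq-main-Rn} in logarithmic form:
\[
\log\|E_\alpha f\|_{L^{t'}}-\log\mathscr C(n,\alpha,q,t)\ \ge\ \frac{1-\gamma}{q}\log\int f^q .
\]
Set $\lambda:=\alpha-\frac nt=\alpha+\frac n{t'}-n>0$. From \eqref{def-gamma},
\[
1-\gamma=\frac{q\alpha+\frac n{t'}q-nq}{n(1-q)}=\frac{q\lambda}{n(1-q)} .
\]
As $q\to1^-$ we have $\frac1{1-q}\log\int f^q\to-\int f\log f$, since $\frac{d}{dq}\int f^q\big|_{q=1}=\int f\log f$ and $\int f=1$. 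Differentiating under the integral is justified by dominated convergence: split according to $f\le1$ and $f>1$, and use $f\log f\in L^1$. Some care is needed because $f\in L^q$ for $q<1$ is not automatic. If $\int f^q=\infty$ the inequality is trivial, and in any case one can first prove the result for truncated, compactly supported $f$ and then pass to the limit.

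It remains to show that $\limsup_{q\to1^-}\mathscr C(n,\alpha,q,t)$ is finite and positive. Define $\mathscr C(n,\alpha,1,t)$ as the best constant of \eqref{ineq-main-Rn-log} directly, namely as the infimum over admissible $f$ of
\[
\exp\Bigl(\tfrac{\lambda}{n}\int f\log f\Bigr)\,\|E_\alpha f\|_{L^{t'}} .
\]
Letting $q\to1$ in the previous inequality gives
\[
\log\|E_\alpha f\|-\liminf_{q\to1^-}\log\mathscr C(n,\alpha,q,t)\ \ge\ -\frac\lambda n\int f\log f,
\]
which is \eqref{ineq-main-Rn-log} once $\liminf_{q\to1^-}\mathscr C(n,\alpha,q,t)>0$. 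Here the uniformity in $q$ of the constants produced in the proof of Theorem~\ref{them-Rn} matters. I would trace that proof, which uses the pointwise bound $E_\alpha f(x)\ge c\,(|x|+R)^{\alpha-n}$ on the mass of $f$ in a ball together with the Carlson--Levin inequality, and check that its constant stays bounded below as $q\to1$.

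\textbf{Existence of an optimizer (the main obstacle).} By Riesz rearrangement, replacing $f$ by $f^*$ does not increase $\|E_\alpha f\|_{L^{t'}}$: for $\alpha>n$ the kernel is increasing in the distance, so rearrangement decreases the potential in the reverse sense. Also $\int f\log f$ is rearrangement invariant. Hence we may minimize over radial non-increasing $f$, normalizing the dilation so that the mass of $f$ in the unit ball equals $1/2$.

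For such $f$ the monotone bound $f(x)\le C|x|^{-n}$ holds. For a minimizing sequence, Helly's selection theorem gives a pointwise limit. Tightness of the mass follows from the normalization together with the pointwise lower bound on $E_\alpha f$, which penalizes mass far away. The $\|E_\alpha f\|_{L^{t'}}$ term (with $t'<0$) is lower semicontinuous by Fatou's lemma applied to $(E_\alpha f)^{t'}$, after adjusting the sign appropriately.

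The entropy term is the delicate part. Its negative part $f\log_-f$ on large sets is controlled via $f\le C|x|^{-n}$ and the confinement coming from $E_\alpha$. Its positive part must be prevented from concentrating, and boundedness of the minimizer should come from the Euler--Lagrange equation
\[
\log f=-\tfrac{n}{\lambda}\,c\,E_\alpha\bigl((E_\alpha f)^{t'-1}\bigr)+\text{const},
\]
which shows that $f$ is bounded because $E_\alpha$ of a positive function is bounded below. I expect the hardest step to be excluding loss of mass at infinity and loss of entropy in the limit. I would first try to redo the $q<1$ compactness argument of Theorem~\ref{them-Rn} with $\int f^q$ replaced by the entropy, using the normalization and de la Vallée-Poussin-type uniform integrability.
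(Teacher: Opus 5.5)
\textbf{Inequality part.} Your derivation of the inequality itself is sound, and it takes a genuinely different route from the paper's.

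\begin{itemize}
\item The paper approximates from above. It restricts to bounded $f$ (via $\min\{f,j\}$) and uses Theorem~\ref{them-Rn-q-bigger-1}. There $|f^q\log f|\le\|f\|_{L^\infty}^{q-1}|f\log f|$ makes differentiation at $q=1$ immediate. Positivity of the limiting constant comes from $\mathscr C(n,\alpha,q,t)\ge\mathscr C(n,\alpha,s,t)$ with $s<1$.
\item You approximate from below with Theorem~\ref{them-Rn}. Your identity $1-\gamma=\frac{q\lambda}{n(1-q)}$ (your $\lambda=\alpha-\frac nt$) is correct. The constant $\mathscr C'$ from Proposition~\ref{Prop-SNC-Rn} does stay bounded below, because $\mathscr L(n,\alpha-n,q)\to\mathscr L(n,\alpha-n,1)>0$ as $q\to1^-$. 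This uses $\Gamma(\frac1{1-q})/\Gamma(\frac1{1-q}-\frac n{\alpha-n})\sim(1-q)^{-\frac n{\alpha-n}}$.
\end{itemize}

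One correction. Splitting into $\{f\le1\}$ and $\{f>1\}$ and using $f\log f\in L^1$ does not dominate $f^q|\log f|$ on $\{f\le1\}$ when $q<1$. You need $f\in L^s$ for some $s<1$. This is automatic when $\|E_\alpha f\|_{L^{t'}}<\infty$: then $\int|y|^{\alpha-n}f<\infty$, and \eqref{CLI} gives $f\in L^s$ for all $s\in(\frac n\alpha,1)$. Otherwise \eqref{ineq-main-Rn-log} is trivial. Your truncation remark also works.

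\textbf{The gap: existence.} The existence part remains a plan, and the step you flag as hardest is exactly where it breaks.

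With $\int_{B_1}f_j=\frac12$, a minimizing sequence only controls the product $J_1[f_j]=\|E_\alpha f_j\|_{L^{t'}}\exp(\frac{\lambda}{n}\int f_j\log f_j)$, not the two factors. Your tightness argument (``the lower bound on $E_\alpha f$ penalizes mass far away'') presupposes a bound on $\|E_\alpha f_j\|_{L^{t'}}$, equivalently on $\int|y|^{\alpha-n}f_j$. That bound in turn requires a lower bound on the entropy. A spreading tail sends $\|E_\alpha f_j\|_{L^{t'}}\to+\infty$ and $\int f_j\log f_j\to-\infty$ at compensating rates, so as stated the argument is circular.

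The same missing a priori bounds are needed in three further places:
\begin{itemize}
\item Semicontinuity of $\|E_\alpha\cdot\|_{L^{t'}}$. Fatou's lemma on $\int(E_\alpha f_j)^{t'}$ gives the wrong inequality since $t'<0$. You need reverse Fatou with a majorant $(\mathscr C_0(|x|^{\alpha-n}+c))^{t'}$, hence a uniform lower bound on $\int|y|^{\alpha-n}f_j$.
\item Convergence of $\int(f_j\log f_j)_-$, which needs a uniform tail bound.
\item Excluding concentration, which needs a bound on $\int(f_j\log f_j)_+$.
\end{itemize}

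\textbf{The missing idea.} Get a separate bound on $\|E_\alpha f_j\|_{L^{t'}}$ and feed it into Theorem~\ref{them-Rn} at a fixed $s\in(\frac n\alpha,1)$. Since $\|f_j\|_{L^1}=1$ and $1-\gamma(s)>0$, the estimate $\|E_\alpha f_j\|_{L^{t'}}\ge\mathscr C(n,\alpha,s,t)\|f_j\|_{L^s}^{1-\gamma(s)}$ gives uniform $L^s$ bounds. For radial nonincreasing $f_j$ this gives $f_j(x)\le C|x|^{-n/s}$.

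\emph{The paper's way.} It takes $f_j$ to be extremals for $q_j\downarrow1$. Their Euler--Lagrange equation \eqref{eq-EL-Rn-q>1} gives $\|f_j\|_{L^\infty}\lesssim(\int f_j^{q_j})^{1/(q_j-1)}$. So $\|E_\alpha f_j\|\to\infty$ would force $\|f_j\|_{L^\infty}\to0$, contradicting $\int_{B_1}f_j=\frac12$.

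\emph{A repair within your direct approach.} Spend the dilation freedom differently. $J_1$ is invariant under $f\mapsto\mu^nf(\mu\,\cdot)$, which shifts the entropy by $n\log\mu$, so you may normalize $\int f_j\log f_j=0$.
\begin{itemize}
\item Then $\|E_\alpha f_j\|_{L^{t'}}=J_1[f_j]$ is bounded, so the bound $f_j\le C|x|^{-n/s}$ above holds.
\item That bound gives tightness. For $s<s'<1$ it also gives the integrable majorant $\min\{e^{-1},C|x|^{-ns'/s}\}$ for $(f_j\log f_j)_-$.
\item Hence $\int(f_j\log f_j)_+=\int(f_j\log f_j)_-$ is bounded. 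De la Vall\'ee-Poussin plus Vitali then give $f_j\to f_0$ in $L^1$ with $\int f_0=1$.
\item Entropy is lower semicontinuous: use Fatou for the positive part and dominated convergence for the negative part.
\item \eqref{LCLI} gives $\int|y|^{\alpha-n}f_j\ge\mathscr L(n,\alpha-n,1)$. This supplies the reverse-Fatou majorant for the $E_\alpha$ term, as in Step~2 of Proposition~\ref{prop-Minimizer}.
\end{itemize}
Your Euler--Lagrange boundedness argument then applies on $\{f_0>0\}$. That set has full measure by the paper's $\varepsilon\log\varepsilon$ perturbation.
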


Having established the corresponding inequalities on the whole space \(\mathbb R^n\), we now turn to the upper half-space \(\mathbb R^n_+\). 
The following theorems provide the analogue of Theorem~\ref{them-Rn} in this setting.

\begin{thm}\label{them-half-space}   
Let $2\le n<\alpha $, $\frac{n}{\alpha}<t<1$ and $q\in (0,1)$. 
Define 
\begin{equation}\label{def-gamma-half-space}
\widetilde\gamma :=\frac{(n-1)-q(\alpha-1)-\frac{n}{t^\prime}q}{(n-1)(1-q)}. 
\end{equation}
Then for any nonnegative $f\in L^1(\partial \R^n_+) \cap L^q(\partial \R^n_+)$, there holds 
\begin{equation}\label{Ineq-Main-half-space} 
	\|\widetilde E_\alpha f\|_{L^{t^\prime}(\R^n_+)}\geq \widetilde{\mathscr{C}}(n,\alpha,q,t)\|f\|^{\widetilde\gamma}_{L^1(\partial \R^n_+)}\|f\|^{1-\widetilde\gamma}_{L^q(\partial \R^n_+)}
\end{equation}
 for some positive constant $\widetilde{\mathscr{C}}(n,\alpha,q,t)$ if and only if $\frac{n-1}{\alpha-1}<q<1$.
Moreover, in the range $\frac{t^\prime(n-1)}{n + t^\prime(\alpha-1)} \le q < 1$, the sharp constant $\widetilde{\mathscr{C}}(n,\alpha,q,t)$ is achieved by a nontrivial, radially symmetric, non-increasing function which is positive almost everywhere. 
\end{thm}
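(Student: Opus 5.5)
We prove Theorem \ref{them-half-space} by mirroring the whole-space argument, with the operator form \eqref{ineq-Re-HLS-operator-half-space} as the basic input. The plan has four steps: the easy range via H\"older, the lower range via a pointwise bound, necessity via a test family, and existence of optimizers via rearrangement and compactness.

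\emph{Easy range.} Set $p_0:=\frac{t'(n-1)}{n+t'(\alpha-1)}$. This is exactly the exponent for which $\frac{n-1}{n}\frac1{p_0}+\frac1t+\frac{n-\alpha+1}{n}=2$; note that $p_0>\frac{n-1}{\alpha-1}$ because $t'<0$. For $p_0\le q<1$, combining \eqref{ineq-Re-HLS-operator-half-space} with the interpolation
\[
\|f\|_{L^{p_0}}\ge\|f\|_{L^1}^{\widetilde\gamma}\|f\|_{L^q}^{1-\widetilde\gamma}
\]
gives \eqref{Ineq-Main-half-space}. This is the reverse form of H\"older for $p_0\le q<1$. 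Scaling fixes the exponent to be $\widetilde\gamma$: under $f_\lambda=\lambda^{n-1}f(\lambda\cdot)$ one has $\|f_\lambda\|_1=\|f\|_1$ and $\|\widetilde E_\alpha f_\lambda\|_{t'}=\lambda^{n-\alpha-n/t'}\|\widetilde E_\alpha f\|_{t'}$.

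\emph{Lower range $\frac{n-1}{\alpha-1}<q<p_0$.} Here we use a pointwise lower bound for the Poisson-type potential in the spirit of Carrillo et al.
\begin{enumerate}
\item By Riesz rearrangement on each slice $x_n=\text{const}$, with kernel $(|x'-y'|^2+x_n^2)^{(\alpha-n)/2}$ increasing in $|x'-y'|$, we have $\|\widetilde E_\alpha f\|_{t'}\ge\|\widetilde E_\alpha f^*\|_{t'}$ since $t'<0$. So we may assume $f$ is radial and nonincreasing on $\R^{n-1}$, and normalise $\|f\|_1=\|f\|_q=1$ by scaling and homogeneity.
\item Since $\alpha>n$, the kernel satisfies $|x-y|^{\alpha-n}\ge c\,(|x|^{\alpha-n}+|y'|^{\alpha-n})$ up to a constant. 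Hence
\[
\widetilde E_\alpha f(x)\ge c\bigl(|x|^{\alpha-n}+\textstyle\int|y'|^{\alpha-n}f(y')\,dy'\bigr).
\]
\item Integrating, $\int_{\R^n_+}(\widetilde E_\alpha f)^{t'}\,dx\le C\,M^{t'+n/(\alpha-n)}$, where $M=\int|y'|^{\alpha-n}f$. Convergence uses $t'(\alpha-n)<-n$, i.e.\ $t>n/\alpha$.
\item It remains to bound the moment $M$ from below, of the form $M\ge c\|f\|_1^{a}\|f\|_q^{-b}$, valid exactly when $q>\frac{n-1}{\alpha-1}$. This is a Carlson--Levin type inequality on $\R^{n-1}$; I would use its sharp version, which needs $(\alpha-n)$ weighed against $(n-1)(1/q-1)$, giving the threshold. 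Dimension counting must be redone with $n-1$ in place of $n$; I expect this to reproduce $\widetilde\gamma$.
\end{enumerate}

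\emph{Necessity for $q\le\frac{n-1}{\alpha-1}$.} Take $f_\varepsilon$ equal to a bump of mass $1-\varepsilon$ plus a spread-out tail of mass $\varepsilon$ with slowly decaying profile $|y'|^{-(\alpha-1)}$-like. Then $\|f_\varepsilon\|_q\to\infty$ while $\|\widetilde E_\alpha f_\varepsilon\|_{t'}$ stays bounded, which is a contradiction (up to adjusting signs of the exponents).

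\emph{Existence of optimizers for $q\ge p_0$.} Take a maximizing sequence, symmetric and decreasing with $\|f_k\|_1=\|f_k\|_q=1$. Use monotonicity bounds $f_k(r)\le Cr^{-(n-1)}$ and $f_k(r)\le Cr^{-(n-1)/q}$, and Helly selection to get a pointwise limit $f$. Fatou gives $\|f\|_1,\|f\|_q\le1$, and $\liminf\|\widetilde E_\alpha f_k\|_{t'}\ge\|\widetilde E_\alpha f\|_{t'}$ (pointwise Fatou on $\widetilde E_\alpha$, then reversed because $t'<0$).

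The main obstacle is excluding loss of mass, i.e.\ ruling out $f=0$ or mass escaping to infinity. For $q\ge p_0$ I would use the strict Hölder/HLS interplay: escaping mass would force the quotient down to a constant no larger than the HLS-based one, contradicting maximality. Failing that, I would treat concentration via the bound $\|f_k\|_{L^{p_0}}$ and compactness of the $L^{p_0}$-quasi-norm under the decay bounds. Positivity a.e.\ then follows from the Euler--Lagrange equation, since $\widetilde E_\alpha^*$ applied to a positive function is positive on $\partial\R^n_+$.
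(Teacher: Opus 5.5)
\paragraph{Verdict.} Your architecture matches the paper's: reduction to $f=f^*$, a pointwise lower bound for $\widetilde E_\alpha f$, Carlson--Levin on $\R^{n-1}$ with $\lambda=\alpha-n$, then Helly plus Fatou. The exponent count does give $1-\widetilde\gamma=\frac{q(\alpha-n)+qn/t'}{(n-1)(1-q)}$, and the H\"older route for $q\ge p_0$ is fine. However, several steps fail as written.

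\paragraph{The pointwise lower bound.} The central pointwise step rests on a false inequality. The bound $|x-y|^{\alpha-n}\ge c\,(|x|^{\alpha-n}+|y'|^{\alpha-n})$ fails at $x=(y',\delta)$ as $\delta\to0$. Correspondingly, $\widetilde E_\alpha f(x)\ge c(\|f\|_1|x|^{\alpha-n}+\int|y'|^{\alpha-n}f)$ is false for a bump centred away from the origin. It holds only because $f=f^*$, and the proof must use this:
\begin{enumerate}
\item Split $\partial\R^n_+$ into $\{|y'|<|x|/2\}$, $\{|x|/2\le|y'|<3|x|/2\}$ and $\{|y'|\ge3|x|/2\}$. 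On the first and third regions the kernel is $\ge2^{n-\alpha}|x|^{\alpha-n}$.
\item On the middle annulus, radial monotonicity bounds its mass by a dimensional multiple of the mass of $\{|x|/2\le|y'|<3|x|/4\}$, where $|x-y|\ge|x|/4$. Together with step (i) this gives $\widetilde E_\alpha f(x)\ge C\|f\|_1|x|^{\alpha-n}$.
\item Separately, $\widetilde E_\alpha f(x',x_n)\ge\int|x'-y'|^{\alpha-n}f\,dy'\ge\int|y'|^{\alpha-n}f\,dy'$, because a symmetric decreasing $f$ convolved with a radially increasing kernel is minimised at the origin. Averaging the two bounds gives the estimate.
\end{enumerate}

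\paragraph{The necessity construction.} The same moment controls your test family. An untruncated $|y'|^{-(\alpha-1)}$ tail has $\int|y'|^{\alpha-n}f=\infty$, hence $\widetilde E_\alpha f\equiv\infty$ and $\|\widetilde E_\alpha f\|_{t'}=+\infty$. Your idea does work for every $q\le\frac{n-1}{\alpha-1}$, and would unify the paper's concentrating family and log-corrected tail. Take the tail to be $\varepsilon$ times a normalised $|y'|^{-(\alpha-1)}\mathbf 1_{\{1<|y'|<R\}}$ with $\varepsilon\log R$ bounded, e.g.\ $R=e^{1/\varepsilon}$. Then $\|f_\varepsilon\|_1$ and the moment stay bounded, and $\|f_\varepsilon\|_q\to\infty$. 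Moreover $\|\widetilde E_\alpha f_\varepsilon\|_{t'}$ is bounded above via $\widetilde E_\alpha f(x)\le C(\|f\|_1|x|^{\alpha-n}+\int|y'|^{\alpha-n}f)$. No sign adjustment is needed, since $1-\widetilde\gamma>0$ for all $q\in(0,1)$.

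\paragraph{Existence of minimizers.} The decisive mechanism is missing.
\begin{itemize}
\item Fatou plus $t'<0$ only gives $(\widetilde E_\alpha f)^{t'}\ge\limsup_k(\widetilde E_\alpha f_k)^{t'}$ pointwise. Passing to integrals needs reverse Fatou, i.e.\ an integrable majorant. The paper gets this from $\widetilde E_\alpha f_k(x)\ge\widetilde{\mathscr C}_0\bigl(|x|^{\alpha-n}+\mathscr L(n-1,\alpha-n,q)\bigr)$, which is the corrected pointwise bound plus Carlson--Levin under $\|f_k\|_1=\|f_k\|_q=1$.
\item More importantly, $q\ge p_0$ enters only through $\widetilde\gamma\le0$. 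If $\|f\|_q=1$, then $\widetilde F[f]=\|\widetilde E_\alpha f\|_{t'}\|f\|_1^{-\widetilde\gamma}\le\|\widetilde E_\alpha f\|_{t'}$. So loss of $L^1$ mass is harmless, which matters because $f_k\le Cr^{-(n-1)/q}$ does not rule it out at the origin. Only $\|f\|_q=1$ is needed.
\item Loss of $L^q$ mass near the origin is excluded by $f_k\le Cr^{-(n-1)}$. At infinity you need a uniform $L^s$ bound for some $s<q$. It comes from the inequality itself at any $s\in(\frac{n-1}{\alpha-1},q)$, using $1-\widetilde\gamma(s)>0$. When $q>p_0$ it also comes from reverse HLS with $s=p_0$, which your final remark comes close to.
\item That bound gives $f_k\le C\min\{r^{-(n-1)/s},r^{-(n-1)}\}\in L^q$, and then $\|f\|_q=1$ by dominated convergence. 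Your ``strict H\"older/HLS interplay'' is not an argument. Concentration is not the danger; spreading to infinity is.
\end{itemize}

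\paragraph{Positivity a.e.} This does not follow from the Euler--Lagrange equation. That equation holds only on the support of $f$, and positivity of $\widetilde E_\alpha^*$ holds equally for $q>1$, where minimizers have compact support. What works is the perturbation $f+\varepsilon\mathbf 1_E$ on a bounded set $E\subset\{f=0\}$. Then $\|f\|_q^q$ increases by exactly $|E|\varepsilon^q$. Meanwhile $\|f\|_1$ and $\|\widetilde E_\alpha f\|_{t'}$ change only by a factor $1+O(\varepsilon)$, using $\widetilde E_\alpha\mathbf 1_E\le C\,\widetilde E_\alpha f$ and the mean value theorem. Since $1-\widetilde\gamma>0$ and $q<1$, this forces $\widetilde F[f+\varepsilon\mathbf 1_E]<\widetilde F[f]$, contradicting minimality.
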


\begin{rem} Let $q=\frac{t^\prime(n-1)}{n+t^\prime(\alpha -1)}$, that is, $\widetilde\gamma=0$. 
Then the inequality \eqref{Ineq-Main-half-space} reduces to the reverse HLS inequality \eqref{ineq-Re-HLS-operator-half-space} on the upper half-space $\R^n_+$. 
\end{rem}

As in the whole-space setting, the sharp constant can be computed explicitly in several important cases.
The optimizers of \eqref{Ineq-Main-half-space} have been explicitly characterized in the conformally invariant case $q=\frac{2(n-1)}{n+\alpha-2}$ and $t=\frac{2n}{n+\alpha}$ in \cite{2017-NgoNgu-RH} and are given by translations, dilations and constant multiples of 
$$
f(x)=\left(1+|x|^{2}\right)^{-\frac{n+\alpha}{2}}, \quad x\in \partial \mathbb{R}^n_+.
$$
For the special conformal case $\alpha=n+2$, $q=\frac{n-1}{n}$ and $t=\frac{n}{n+1}$, the sharp constant in \eqref{Ineq-Main-half-space} was computed by \cite{2017-NgoNgu-RH} and is given by
\[
\widetilde{\mathscr{C}}\left(n,n+2,\frac{n-1}{n},\frac{n}{n+1}\right) 
=
\widetilde{\mathscr N} \left(n,n+2,\frac{n-1}{n}\right)
=
\frac{2^{\frac{1}{n}-1}}{\pi}
\left( \frac{\Gamma(n)}{\Gamma(\frac{n}{2})} \right)^\frac{1}{n}
\left( \frac{\Gamma(n-1)}{\Gamma(\frac{n-1}{2})} \right)^\frac{1}{n-1}.
\]
Our result goes beyond the conformally invariant setting. 
In Section~\ref{Sec-half-space}, we characterize the optimizers for \eqref{Ineq-Main-half-space} in the case $\alpha=n+2$, $\frac{n}{n+2}<t<1$ and $\frac{n-1}{n+1}<q<1$, which are given, up to translations, dilations, and constant multiples, by
\[
f(x)=\left(1+|x|^{2}\right)^{-\frac{1}{1-q}},\quad x\in \partial \mathbb{R}^n_+.
\]
The corresponding sharp constant $\widetilde{\mathscr{C}}(n,n+2,q,t)$ is given by \eqref{const-sharp-half-space}.


We next consider the case \(q>1\).


\begin{thm}\label{them-half-space-q-bigger-1}     
Assume that $2\le n<\alpha$ and $\frac{n}{\alpha}<t<1<q$.
Let $\widetilde\gamma>1$ be defined as \eqref{def-gamma-half-space}.
Then there holds
\begin{equation}\label{ineq-main-half-space-q-bigger-1}  
	\|\widetilde E_\alpha f\|_{L^{t^\prime}(\R^n_+)} \|f\|^{\widetilde{\gamma}-1}_{L^q(\partial\R^n_+)} \geq \widetilde{\mathscr{C}}(n,\alpha,q,t)\|f\|^{\widetilde\gamma}_{L^1( \partial\R^n_+)}
\end{equation}
for any nonnegative function $ f\in L^1(\partial\R^n_+) \cap L^q(\partial\R^n_+)$, for some positive constant $\widetilde{\mathscr{C}}(n,\alpha,q,t)$. 
Moreover, there exists a radially symmetric, non-increasing, bounded function with compact support such that the sharp constant  $\widetilde{\mathscr{C}}(n,\alpha,q,t)$ is achieved. 
\end{thm}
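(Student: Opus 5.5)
We prove Theorem \ref{them-half-space-q-bigger-1} by the same scheme as its whole-space counterpart, Theorem \ref{them-Rn-q-bigger-1}. The key steps are: reduce to radial non-increasing data, obtain a pointwise lower bound for the potential, and prove the inequality and existence of an optimizer by a direct method.

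\textbf{Reduction.} The functional is invariant under translations in $\partial\R^n_+$. It scales correctly under $f\mapsto \lambda f(\mu\,\cdot)$; this is exactly what fixes $\widetilde\gamma$. Write $x=(x',x_n)$ and $|x-y|^2=|x'-y|^2+x_n^2$. For each fixed $x_n>0$, the kernel $y\mapsto (|x'-y|^2+x_n^2)^{(\alpha-n)/2}$ is radially \emph{increasing}, because $\alpha>n$. By the reverse Riesz rearrangement inequality (as used for the reverse HLS inequality; see \cite{2015-DouZhu-IM-R, 2017-NgoNgu-RH}), we get $\widetilde E_\alpha f^*(x)\le$ the corresponding rearranged quantity. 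More precisely, $\int_{\partial\R^n_+}(\widetilde E_\alpha f)^{t'}\,dx'$ does not increase under symmetric decreasing rearrangement of $f$, since $t'<0$. Hence $\|\widetilde E_\alpha f\|_{L^{t'}}$, which is a negative power of this integral, does not decrease. The $L^1$ and $L^q$ norms are preserved. Thus it suffices to consider $f=f^*$, normalized by $\|f\|_{L^1}=\|f\|_{L^q}=1$.

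\textbf{Lower bound.} For such $f$ we need a uniform bound $\|\widetilde E_\alpha f\|_{L^{t'}(\R^n_+)}\ge c>0$. Since $f$ is radial non-increasing with $\|f\|_{L^q}=1$, we have $f(r)\le C r^{-(n-1)/q}$. Because $q>1$, the mass cannot concentrate at the origin. Together with $\|f\|_{L^1}=1$, the standard Chebyshev argument gives a radius $R_0$, depending only on $n,q$, with $\int_{|y|\le R_0}f\ge 1/2$. Then for every $x\in\R^n_+$,
\[
\widetilde E_\alpha f(x)\ \ge\ \tfrac12\min_{|y|\le R_0}|x-y|^{\alpha-n}\ \ge\ \tfrac12\bigl(|x|-R_0\bigr)_+^{\alpha-n}.
\]
Also $\widetilde E_\alpha f(x)\ge \tfrac12\bigl(\dots\bigr)$; in fact we may use $|x-y|\ge x_n$ to obtain $\widetilde E_\alpha f(x)\ge \tfrac12 x_n^{\alpha-n}$. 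This gives $\widetilde E_\alpha f(x)\ge c\,(1+|x|)^{\alpha-n}$ off a bounded set, and in general the bound $\widetilde E_\alpha f\ge c\max(x_n,(|x|-R_0)_+)^{\alpha-n}$. Since $t'<0$, we get $\int_{\R^n_+}(\widetilde E_\alpha f)^{t'}\le C\int \max(x_n,(|x|-R_0)_+)^{t'(\alpha-n)}dx$. This is finite exactly when $t'(\alpha-n)<-n$, i.e.\ $t>n/\alpha$. Near the set $\{x_n=0,\ |x'|\le R_0\}$ the integrand is $x_n^{t'(\alpha-n)}$, which is not integrable in $x_n$ near $0$ if $t'(\alpha-n)\le -1$. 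This is the main obstacle. To handle it, I would instead use $\widetilde E_\alpha f(x)\ge \int_{|y|\le R_0}f\,|x-y|^{\alpha-n}dy$ together with $|x-y|\ge |x'-y|$, and exploit the $L^q$ bound so that $f$ cannot concentrate near $x'$. Concretely, $\int_{|x'-y|<\delta}f\le C\delta^{(n-1)(1-1/q)}$. Choosing $\delta$ small uniformly then gives $\widetilde E_\alpha f(x)\ge \tfrac14\delta^{\alpha-n}$ on bounded sets. Combined with the far-field bound, this yields $\int(\widetilde E_\alpha f)^{t'}\le C(n,\alpha,q,t)$, hence \eqref{ineq-main-half-space-q-bigger-1}.

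\textbf{Existence of an optimizer.} Take a minimizing sequence of radial non-increasing functions normalized as above. The bound $f_j(r)\le Cr^{-(n-1)/q}$ and Helly's selection theorem give a pointwise a.e.\ limit $f$. Tightness of mass comes from the uniform $R_0$ above, which yields $\|f\|_{L^1}=1$ after, if needed, a refined concentration-compactness argument. Fatou's lemma gives $\|f\|_{L^q}\le1$, and Fatou applied to $(\widetilde E_\alpha f_j)^{t'}$ gives $\int(\widetilde E_\alpha f)^{t'}\le\liminf$, using pointwise convergence of $\widetilde E_\alpha f_j$ by dominated convergence on compact pieces. Hence $f$ is a minimizer. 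Boundedness and compact support follow from the Euler--Lagrange equation, exactly as in Theorem \ref{them-Rn-q-bigger-1}:
\[
\int(\widetilde E_\alpha f)^{t'-1}(x)\,|x-y|^{\alpha-n}\,dx \;=\; a-b f^{q-1}(y)\quad\text{on }\{f>0\},\qquad \ge a \ \text{ elsewhere}.
\]
The left side grows at infinity, which forces $f^{q-1}=(a-\text{LHS})_+/b$. This is bounded and vanishes for large $|y|$, so $f$ is bounded with compact support.
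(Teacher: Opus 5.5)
Your proposal has genuine gaps, in both the inequality and the existence part.

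\textbf{The localization claim is false.} Your lower bound rests on the claim that there is $R_0=R_0(n,q)$ with $\int_{|y|\le R_0}f\ge\frac12$. The normalization $\|f\|_{L^1(\partial\R^n_+)}=\|f\|_{L^q(\partial\R^n_+)}=1$ with $q>1$ rules out concentration, but it does not rule out escape of mass to infinity. For a counterexample, let $B_r$ be the ball of radius $r$ in $\partial\R^n_+$ and take the sum of two pieces:
\begin{itemize}
\item a spike $a\mathbf 1_{B_r}$ with $a^q|B_r|=1$, whose mass $|B_r|^{1-1/q}\to0$ as $r\to0$;
\item a plateau $|B_R|^{-1}\mathbf 1_{B_R}$, with mass $1$ and $L^q$-norm $|B_R|^{(1-q)/q}\to0$ as $R\to\infty$.
\end{itemize}
After a renormalization $\lambda f(\mu\,\cdot)$ with $\lambda,\mu\to1$, this $f$ is radial, non-increasing, has both norms equal to $1$, and $\int_{|y|\le R_0}f\to0$ for every fixed $R_0$. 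So your far-field bound $\widetilde E_\alpha f(x)\ge\frac12(|x|-R_0)_+^{\alpha-n}$ is unjustified, and so is the tightness you invoke later.

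The inequality can be repaired along your lines. Your non-concentration estimate, applied with the full mass, gives $\widetilde E_\alpha f(x)\ge\delta^{\alpha-n}\bigl(1-C\delta^{(n-1)(1-1/q)}\bigr)$ for \emph{every} $x\in\R^n_+$. The growth at infinity must then come from radial monotonicity, via the three-annuli decomposition behind \eqref{LB_E}, which works verbatim for $y\in\partial\R^n_+$. That gives $\widetilde E_\alpha f(x)\ge C|x|^{\alpha-n}$ with no localization of mass. Since $t^\prime(\alpha-n)<-n$, these two bounds control $\int_{\R^n_+}(\widetilde E_\alpha f)^{t^\prime}$ uniformly. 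For comparison, the paper instead derives the $q>1$ inequality from Theorem \ref{them-half-space} at an auxiliary exponent $s\in(\frac{n-1}{\alpha-1},1)$, followed by H\"older interpolation between $L^s$, $L^1$ and $L^q$.

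\textbf{The rearrangement step is stated backwards.} Lemma \ref{lm-rearrangement-half-space} gives $\|\widetilde E_\alpha f^*\|_{L^{t^\prime}}\le\|\widetilde E_\alpha f\|_{L^{t^\prime}}$. Equivalently, $\int(\widetilde E_\alpha f)^{t^\prime}$ does not \emph{decrease} under rearrangement. Only this direction lets you restrict to $f=f^*$.

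\textbf{Tightness in the existence part.} Fatou only gives $\|f_0\|_{L^1}\le 1$. That is the wrong direction, because $\|f\|_{L^1}$ enters $\widetilde F$ with the negative power $-\widetilde\gamma$. Since the normalization cannot supply tightness, it must come from minimality. One way:
\begin{itemize}
\item Along a minimizing sequence, $\|\widetilde E_\alpha f_j\|_{L^{t^\prime}}$ is bounded.
\item For radial non-increasing $f_j$ we have $\widetilde E_\alpha f_j(x)\ge A_j:=\int_{\partial\R^n_+}|y|^{\alpha-n}f_j\,dy$, and also $\widetilde E_\alpha f_j(x)\ge C|x|^{\alpha-n}$. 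Together these give $\int_{\R^n_+}(\widetilde E_\alpha f_j)^{t^\prime}\lesssim A_j^{t^\prime+\frac{n}{\alpha-n}}$.
\item The exponent $t^\prime+\frac{n}{\alpha-n}$ is negative, so boundedness of the functional forces $\sup_jA_j<\infty$. Then $\int_{|y|>R}f_j\le A_jR^{n-\alpha}$ gives uniform tightness.
\end{itemize}
Alternatively, as in the paper, the $q<1$ inequality bounds $\|f_j\|_{L^s}$ for some $s<1$. This yields the majorant $f_j(r)\le Cr^{-(n-1)/s}$, which is integrable at infinity.

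\textbf{Your Fatou step for the potential term points the wrong way.} The inequality $\int(\widetilde E_\alpha f_0)^{t^\prime}\le\liminf_j\int(\widetilde E_\alpha f_j)^{t^\prime}$ means, since $1/t^\prime<0$, that $\|\widetilde E_\alpha f_0\|_{L^{t^\prime}}\ge\limsup_j\|\widetilde E_\alpha f_j\|_{L^{t^\prime}}$. That says nothing about $f_0$ being a minimizer. You need the reverse inequality, obtained in two steps:
\begin{itemize}
\item Fatou in $y$ gives $\widetilde E_\alpha f_0\le\liminf_j\widetilde E_\alpha f_j$ pointwise, hence $(\widetilde E_\alpha f_0)^{t^\prime}\ge\limsup_j(\widetilde E_\alpha f_j)^{t^\prime}$.
\item A uniform majorant $(\widetilde E_\alpha f_j)^{t^\prime}\le M^{t^\prime}\in L^1(\R^n_+)$, supplied by the corrected uniform lower bound, lets you pass to the limit by dominated convergence, as in Step~2 of Proposition \ref{prop-Minimizer}.
\end{itemize}
Your asserted pointwise convergence $\widetilde E_\alpha f_j\to\widetilde E_\alpha f_0$ is not justified, since the kernel grows at infinity, but it is also not needed.

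Your Euler--Lagrange argument for boundedness and compact support matches the paper's and is fine once a minimizer is in hand.
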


As in the whole-space setting, The endpoint case \(q=1\) leads to the following logarithmic inequality.

\begin{thm}\label{them-half-space-q=1-log}    
Assume that $2\le n<\alpha$ and $\frac{n}{\alpha}<t<1$. Then there holds
\begin{equation}\label{ineq-main-half-space-log}
\int_{\partial\R^n_+} f \log f \, dx
+
\frac{n-1}{\alpha-\frac{n}{t}}
\log\left(
\frac{\|\widetilde E_\alpha f\|_{L^{t^\prime}(\R^n_+)}}{\widetilde{\mathscr C}(n,\alpha,1,t)}
\right)
\ge 0,
\end{equation}
for any nonnegative function $f \in L^1(\partial\R^n_+)$ such that $\int_{\partial\R^n_+} f(x)\,dx = 1$ and $f \log f \in L^1(\partial\R^n_+)$, for some positive constant $\widetilde{\mathscr{C}}(n,\alpha,1,t)$.
Moreover, there exists a radially symmetric, non-increasing, bounded function such that the equality holds.
\end{thm}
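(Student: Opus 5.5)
The logarithmic inequality \eqref{ineq-main-half-space-log} will be derived as the limit $q\to1$ of the inequalities in Theorems~\ref{them-half-space} and \ref{them-half-space-q-bigger-1}. The existence of an optimizer will then be obtained by a direct variational argument on the normalized functional.

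\emph{Step 1: limit $q\to1$.} Fix $f\ge0$ with $\int_{\partial\R^n_+}f=1$ and $f\log f\in L^1$. Taking logarithms in \eqref{Ineq-Main-half-space} for $q<1$ (or \eqref{ineq-main-half-space-q-bigger-1} for $q>1$) gives
\[
\log\|\widetilde E_\alpha f\|_{L^{t'}(\R^n_+)}\ \ge\ \log\widetilde{\mathscr C}(n,\alpha,q,t)+(1-\widetilde\gamma)\log\|f\|_{L^q(\partial\R^n_+)} .
\]
As $q\to1$, we have $1-\widetilde\gamma=\frac{q(\alpha-\frac{n}{t})-(n-1)q+(n-1)q\cdot 0}{(n-1)(1-q)}$ after simplification. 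More precisely, $1-\widetilde\gamma = \frac{q(\alpha-\frac nt)}{(n-1)(1-q)}$, using $\frac{n}{t'}=n-\frac nt$. We also have $\log\|f\|_q=\frac1q\log\int f^q = -(1-q)\int f\log f+o(1-q)$. Hence
\[
(1-\widetilde\gamma)\log\|f\|_q\to-\frac{\alpha-\frac nt}{n-1}\int f\log f .
\]
Multiplying by $\frac{n-1}{\alpha-n/t}$ yields \eqref{ineq-main-half-space-log} with $\widetilde{\mathscr C}(n,\alpha,1,t):=\limsup_{q\to1}\widetilde{\mathscr C}(n,\alpha,q,t)$, provided this limsup is positive. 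This is the delicate point: positivity of the constant uniformly near $q=1$. To secure it, I would redo the proof of Theorem~\ref{them-half-space} with constants tracked. By rearrangement it suffices to take $f$ radial and nonincreasing on $\partial\R^n_+$. The pointwise lower bound for the Poisson-type potential then gives, for $|x|\le R$ with $R$ chosen so that half the mass lies in $B_R$, that $\widetilde E_\alpha f(x)\gtrsim \max(R,|x|)^{\alpha-n}$. The Carlson--Levin step is replaced by the entropy estimate: for $R$ the radius containing half the mass, $\int f\log f\ge -(n-1)\log R - C$ by Jensen. This gives $\|\widetilde E_\alpha f\|_{t'}\gtrsim R^{\alpha-n+n/t'}=R^{\alpha-n/t}$ and directly yields the inequality with an explicit positive constant. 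I would actually prefer this direct proof of existence of some constant, defining $\widetilde{\mathscr C}(n,\alpha,1,t)$ as the sharp constant (the infimum of $\|\widetilde E_\alpha f\|_{t'}\exp(\frac{\alpha-n/t}{n-1}\int f\log f)$).

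\emph{Step 2: optimizer.} Take a minimizing sequence, replace it by symmetric decreasing rearrangements (this decreases $\|\widetilde E_\alpha f\|_{t'}$ by the reverse Riesz rearrangement inequality and preserves the entropy), and normalize by dilation so that the half-mass radius is $1$; the functional is dilation invariant. Radial monotonicity gives $f_k(r)\le C r^{-(n-1)}$, and Helly's theorem gives a pointwise limit $f$. Three points remain. First, no mass escapes to infinity: tail mass only increases $\|\widetilde E_\alpha f\|_{t'}$ relative to the entropy gain, and by Step 1 type bounds a splitting of the mass strictly increases the functional. Second, no concentration at $0$: the entropy would blow up. Third, lower semicontinuity: $\widetilde E_\alpha f_k\to\widetilde E_\alpha f$ pointwise by dominated convergence using the decay, and Fatou's lemma applies to $\|\cdot\|_{t'}$ since $t'<0$ makes the integrand behave correctly. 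The entropy is handled by splitting $f\log f$ into its positive part (lower semicontinuous by Fatou) and its negative part (controlled by uniform moment and decay bounds). Boundedness of the optimizer follows from the Euler--Lagrange equation $\log f = c - \lambda\,(\widetilde E_\alpha^{*}(\widetilde E_\alpha f)^{t'-1})$, whose right-hand side is bounded above since the adjoint potential is positive and bounded below near the origin.

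I expect the main obstacle to be the compactness in Step 2, specifically ruling out loss of mass at infinity while controlling the negative part of the entropy along the sequence.
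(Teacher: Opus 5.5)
Your direct lower bound has a genuine gap. The estimate ``$\int f\log f\ge-(n-1)\log R-C$ by Jensen'', with $R$ the half-mass radius, is false. Jensen controls the entropy only on the finite-measure set $B_R$. On its complement in $\partial\R^n_+\cong\R^{n-1}$ the entropy can be arbitrarily negative, even for radially nonincreasing $f$. For example,
\[
f_L=\frac{\mathbf 1_{B_1}}{2|B_1|}+\frac{\mathbf 1_{B_L\setminus B_1}}{2|B_L\setminus B_1|}
\]
is radially nonincreasing for $L^{n-1}\ge 2$ and has $R=1$, yet $\int f_L\log f_L=-\frac{n-1}{2}\log L+O(1)\to-\infty$. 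The inequality still holds for $f_L$, but only because $\int|y|^{\alpha-n}f_L\,dy\sim L^{\alpha-n}$ makes $\widetilde E_\alpha f_L$ large, and the half-mass radius does not see this. Two parts of your argument rest on this estimate: the positivity of the constant (which, as you note, your Step 1 limit needs) and the compactness claims of Step 2 (no escape of mass, control of the negative part of the entropy, ``Step 1 type bounds''). So the argument as written does not close.

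The missing idea is to measure spreading by the moment $M=\int_{\partial\R^n_+}|y|^{\alpha-n}f\,dy$ rather than by $R$.
\begin{itemize}
\item By \eqref{Estimate-LB-half-space}, $\widetilde E_\alpha f(x)\ge\widetilde{\mathscr C}_0(|x|^{\alpha-n}+M)$, so scaling gives $\|\widetilde E_\alpha f\|_{L^{t^\prime}(\R^n_+)}\ge c\,M^{(\alpha-\frac nt)/(\alpha-n)}$.
\item The logarithmic Carlson--Levin inequality \eqref{LCLI}, in dimension $n-1$ with $\lambda=\alpha-n$, gives $M\ge\mathscr L(n-1,\alpha-n,1)\exp\bigl(-\frac{\alpha-n}{n-1}\int f\log f\bigr)$.
\item The exponents cancel exactly, and \eqref{ineq-main-half-space-log} follows. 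This is the route of the remark following the proof of Theorem~\ref{them-Rn-q=1-log}.
\end{itemize}

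The paper's own proof avoids this issue differently.
\begin{itemize}
\item It uses the uniform bound $\widetilde{\mathscr C}(n,\alpha,q,t)\ge\widetilde{\mathscr C}(n,\alpha,s,t)>0$ for all $q>1$ and a fixed $s\in(\frac{n-1}{\alpha-1},1)$, obtained by H\"older between $L^s$ and $L^q$.
\item It lets $q\to1^+$ for truncated, bounded $f$. Truncation is necessary, since $f\log f\in L^1$ does not give $f\in L^q$ for any $q\neq1$.
\item It obtains the optimizer as a limit of $q_j$-extremals, with uniform $L^\infty$ bounds from the Euler--Lagrange equation and tail bounds from the $q<1$ inequality.
\end{itemize}

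Your direct minimization can be completed with the moment version.
\begin{itemize}
\item Under your normalization, split $f=f\mathbf 1_{B_1}+f\mathbf 1_{B_1^c}$. Use Jensen on the inner piece and \eqref{LCLI} on the outer piece (mass $\frac12$) to get $\int f\log f\ge-\frac{n-1}{2(\alpha-n)}\log M-C$.
\item Hence the functional is $\gtrsim M^{(\alpha-\frac nt)/(2(\alpha-n))}$. This bounds $M$ along a minimizing sequence.
\item Bounded $M$ gives tightness and control of $\int f_k(\log f_k)_-$. Together with the upper bound on the entropy, this gives uniform integrability.
\end{itemize}

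Also, pointwise convergence $\widetilde E_\alpha f_k\to\widetilde E_\alpha f$ does not follow from $f_k\le Cr^{-(n-1)}$, since that bound does not dominate the growing kernel. What you actually need is $\widetilde E_\alpha f\le\liminf_k\widetilde E_\alpha f_k$ (Fatou), together with the integrable majorant $(\widetilde E_\alpha f_k)^{t^\prime}\le C(1+|x|^{\alpha-n})^{t^\prime}$, which comes from $M_k\ge\frac12$. Reverse Fatou then applies.
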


This paper is organized as follows.
Section~\ref{Sec-Rn} is devoted to the proofs of the reverse HLS inequalities on $\mathbb{R}^n$, 
whereas Section~\ref{Sec-half-space} is devoted to the proofs of the corresponding inequalities on the upper half-space $\mathbb{R}^n_+$.

\section{Proofs of theorems on \texorpdfstring{$\mathbb{R}^n$}{Rn}} \label{Sec-Rn}

For $0<q<1$, let $f \in L^{1}(\mathbb{R}^{n}) \cap L^{q}(\mathbb{R}^{n})$ be a function vanishing at infinity. 
Denote by $f^{*}$ its symmetric decreasing rearrangement; see \cite[Chapter~3]{2001-Lieb-Loss-B} for details.

We begin with the following lemma collecting some basic properties of rearrangements.

\begin{lem}\label{lemma-rearrangement} 
Assume $1\le n<\alpha$.
\begin{enumerate}
\item If $f\in L^p(\R^n)$ for $p>0$, then $f^*\in L^p(\R^n)$ and $\|f\|_{L^p(\R^n)}=\|f^*\|_{L^p(\R^n)}$.


\item The function $E_\alpha f^*$ is radially symmetric. Moreover, if $f\not\equiv 0$, then $E_\alpha f^*$ is strictly increasing.

\item For any nonnegative function $f$, 
\begin{equation}\label{ineq-rearrangement-Rn} 
\|E_\alpha f^*\|_{L^{t^\prime}(\R^n)} \le \|E_\alpha f\|_{L^{t^\prime}(\R^n)},
\end{equation}
with equality if and only if $f(x)=f^*(x+x_0)$ for some $x_0\in\R^n$.
\end{enumerate} 
\end{lem}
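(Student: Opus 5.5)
Part (i) is the equimeasurability of $f$ and $f^*$. Since $f^*$ has the same distribution function as $|f|$, the layer-cake representation gives
\[
\int_{\R^n}|f|^p\,dx=p\int_0^\infty s^{p-1}\bigl|\{|f|>s\}\bigr|\,ds=\int_{\R^n}(f^*)^p\,dx
\]
for every $p>0$. This holds with no change for $0<p<1$, where $\|\cdot\|_{L^p}$ is only a quasi-norm.

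For (ii), radial symmetry follows because $f^*$ is radial and the kernel $|x-y|^{\alpha-n}$ is invariant under rotations, so $E_\alpha f^*(Rx)=E_\alpha f^*(x)$ for $R\in O(n)$. For monotonicity, fix $|x_1|<|x_2|$. By radiality we may assume $x_2=\lambda x_1$ with $\lambda>1$, or simply compare two points on a ray. Let $H$ be the hyperplane bisecting $x_1$ and $x_2$, and let $\sigma$ be the reflection across $H$; then $0$ lies on the side of $x_1$. Write
\[
E_\alpha f^*(x_2)-E_\alpha f^*(x_1)=\int_{\text{side of }x_1}\bigl(|x_2-y|^{\alpha-n}-|x_1-y|^{\alpha-n}\bigr)\bigl(f^*(y)-f^*(\sigma y)\bigr)\,dy .
\]
For $y$ on the $x_1$ side we have $|y|\le|\sigma y|$, so $f^*(y)\ge f^*(\sigma y)$. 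We also have $|x_2-y|>|x_1-y|$, and since $\alpha-n>0$ the kernel factor is positive. Hence the integrand is nonnegative. It is strictly positive on a set of positive measure unless $f^*$ is constant on the relevant spheres. Since $f^*\not\equiv0$ decays to zero (it lies in $L^1$), $f^*(y)>f^*(\sigma y)$ on a set of positive measure, which gives strict increase.

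For (iii), the kernel grows with distance, so the usual Riesz rearrangement inequality appears in reversed form. I would write $|x-y|^{\alpha-n}=\int_0^\infty \mathbf 1_{\{|x-y|^{\alpha-n}>s\}}\,ds$ and use duality for exponents $t'<0$. Since $t'<0$, we have $\|E_\alpha f\|_{L^{t'}}=\bigl(\int (E_\alpha f)^{t'}\bigr)^{1/t'}$, and the inequality is equivalent to
\[
\int (E_\alpha f^*)^{t'}\,dx\ \ge\ \int (E_\alpha f)^{t'}\,dx .
\]
The natural tool is the reverse Riesz (or Brascamp--Lieb--Luttinger type) inequality for kernels that are increasing functions of $|x-y|$:
\[
\iint f(x)\,|x-y|^{\alpha-n}\,g(y)\,dx\,dy\ \ge\ \iint f^*(x)\,|x-y|^{\alpha-n}\,g^*(y)\,dx\,dy .
\]
This follows by truncating, $|x-y|^{\alpha-n}=c-(c-|x-y|^{\alpha-n})_+ +\dots$, and applying Riesz's inequality to the symmetric decreasing part, as in Dou--Zhu and Beckner. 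I would combine it with the reverse Hölder duality
\[
\|F\|_{L^{t'}}=\inf\Bigl\{\int Fg : g\ge0,\ \|g\|_{L^t}=1\Bigr\},
\]
valid for $0<t<1$, $F>0$. Take $g$ attaining the infimum for $F=E_\alpha f$, namely $g\propto (E_\alpha f)^{t'-1}$. Then
\[
\|E_\alpha f\|_{L^{t'}}=\int (E_\alpha f)\,g\ \ge\ \int (E_\alpha f^*)\,g^*\ \ge\ \|E_\alpha f^*\|_{L^{t'}},
\]
using $\|g^*\|_{L^t}=1$ from (i).

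Equality forces equality in the reverse Riesz step. The kernel is strictly increasing in $|x-y|$, and by (ii) $g^*$ is strictly symmetric decreasing, being a negative power of a strictly increasing function. Lieb's strict rearrangement theorem then yields $f=f^*(\cdot+x_0)$.

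The main obstacle is this reverse Riesz inequality with its equality case. The kernel is not integrable at infinity, so the truncation must be justified by monotone convergence, and finiteness of $\int f g$ has to be handled (by Hölder, $f\in L^1\cap L^q$). The strict case requires checking that Lieb's strict version applies after the subtraction trick.
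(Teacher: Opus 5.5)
The paper does not prove this lemma; it calls (i) standard and defers (ii)--(iii) to Brascamp--Lieb and Ng\^o--Nguyen. Your (i), (ii) and the inequality in (iii) are sound.

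\begin{itemize}
\item In (ii), the reflection identity is correct once $E_\alpha f^*$ is finite. The decay of $f^*$ does rule out $f^*(y)=f^*(\sigma y)$ a.e.\ on the half-space.
\item In (iii), the chain reverse H\"older, then reverse Riesz, then reverse H\"older, with $g\propto(E_\alpha f)^{t'-1}$, is correct.
\end{itemize}

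One correction: the subtraction trick
\[
\iint f(x)\,\mathbf 1_{\{|x-y|^{\alpha-n}>s\}}\,g(y)\,dx\,dy=\|f\|_{L^1}\|g\|_{L^1}-\iint f(x)\,\mathbf 1_{\{|x-y|^{\alpha-n}\le s\}}\,g(y)\,dx\,dy
\]
needs $g\in L^1(\R^n)$, not a H\"older bound. This does hold. $E_\alpha f\gtrsim|x|^{\alpha-n}$ at infinity and is bounded below on compacts, so $g\lesssim|x|^{-(\alpha-n)/(1-t)}$. That is integrable because $t>\frac n\alpha\ge2-\frac\alpha n$.

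The genuine gap is the ``only if'' part of (iii). Lieb's strict rearrangement theorem (Lieb--Loss, Theorem~3.9) requires the kernel $h$ in $\iint f(x)h(x-y)g(y)$ to be strictly symmetric decreasing, and you have no such kernel. Your kernel $|x-y|^{\alpha-n}$ is increasing. After the subtraction trick you only have ball indicators $\mathbf 1_{\{|z|^{\alpha-n}\le s\}}$, none of which is strictly decreasing. Strictness of $g^*$ does not substitute, since the theorem places no such hypothesis on the rearranged functions. Also, (ii) does not give strictness of $g^*$ directly. $g^*$ is the rearrangement of $(E_\alpha f)^{t'-1}$, not $(E_\alpha f^*)^{t'-1}$; the two coincide only after you use equality in the last reverse H\"older step. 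So as written the equality characterization is asserted, not proved.

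The missing step is short:
\begin{enumerate}
\item By Riesz, each layer satisfies $\iint f\,\mathbf 1_{\{|x-y|^{\alpha-n}\le s\}}\,g\le\iint f^*\,\mathbf 1_{\{|x-y|^{\alpha-n}\le s\}}\,g^*$.
\item In the equality case, the $ds$-integrals of $\|f\|_{L^1}\|g\|_{L^1}$ minus these layers coincide. Both equal $\|E_\alpha f\|_{L^{t'}}<\infty$, so equality holds for a.e.\ $s$.
\item Integrating against $e^{-s}\,ds$ gives
\[
\iint f(x)\,e^{-|x-y|^{\alpha-n}}\,g(y)\,dx\,dy=\iint f^*(x)\,e^{-|x-y|^{\alpha-n}}\,g^*(y)\,dx\,dy<\infty .
\]
\item The kernel $e^{-|z|^{\alpha-n}}$ is bounded and strictly symmetric decreasing, and $f,g\in L^1$. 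Lieb's theorem therefore yields $f=f^*(\cdot-a)$ and $g=g^*(\cdot-a)$ directly, without any strictness of $g^*$.
\end{enumerate}

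The alternative of combining strictness of $g^*$ with Burchard's equality cases for characteristic functions also works, but is much heavier. The converse direction is immediate from translation invariance of $E_\alpha$.
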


\begin{proof}
$(i)$ is standard. The proofs of $(ii)-(iii)$ are similar to that in \cite[Proposition~9]{1976-Brascamp-Lieb} and \cite[Lemma~1]{2017-NgoNgu-R}. We omit them here.

\end{proof}

To establish the reverse HLS inequality, we use the following Carlson--Levin inequality.
The inequality goes back to F.~Carlson \cite{1934-Carlson}, and its sharp form was obtained by V.~Levin \cite{1948-Levin}.
The statement below is taken from \cite[Lemma~1.2]{2019-Nguyen-AAM}; see \cite[Lemma~4.1]{2004-Lutwak-Yang-Zhang} for a proof.
For later reference, we state all relevant cases. In the present section we mainly use the range $\frac{n}{n+\lambda}<q<1$, whereas the cases $q\ge1$ will be used in the next section \ref{sec-q>1}.


\begin{lem}[Carlson--Levin inequality \cite{1934-Carlson,1948-Levin,2004-Lutwak-Yang-Zhang,2019-Nguyen-AAM}]  \label{lemma-CLI}  
Let $\lambda>0$ and $q>\frac{n}{n+\lambda}$. There is a constant $\mathscr L(n,\lambda,q)>0$ such that for all $f\ge0$, one has
\begin{enumerate}

\item if $n/(n+\lambda)<q<1$, then
\begin{equation}\label{CLI}
\int_{\mathbb R^n} f(x)|x|^{\lambda}\,dx
\ge
\mathscr L(n,\lambda,q)
\,
\|f\|_{L^q(\mathbb R^n)}^{\frac{q\lambda}{n(1-q)}}
\,
\|f\|_{L^1(\mathbb R^n)}^{\,1-\frac{q\lambda}{n(1-q)}} 
\end{equation}
with
\[
\mathscr L(n,\lambda,q)
=
\frac{n(1-q)}{(n+\lambda)q-n}
\left(
\frac{(n+\lambda)q-n}
     {\lambda^q q}
\right)^{\frac{\lambda}{n(1-q)}}
\left(
\frac{
\Gamma\!\left(\frac n2\right)
\Gamma\!\left(\frac1{1-q}\right)}
{
2\pi^{n/2}
\Gamma\!\left(\frac1{1-q}-\frac n\lambda\right)
\Gamma\!\left(\frac n\lambda\right)}
\right)^{\frac{\lambda}{n}} ,
\]
and equality holds if and only if
\begin{equation}
f(x)=(1+|x|^\lambda)^{-\frac1{1-q}}
\end{equation}
up to dilations and constant multiples;

\item if $q=1$, then
\begin{equation}\label{LCLI}
\int_{\mathbb R^n} |x|^\lambda f(x)\,dx
\ge
\mathscr L(n,\lambda,1)
\exp\left(
-\frac{\lambda}{n}
\frac{\int_{\mathbb R^n}f(x)\log f(x)\,dx}
{\int_{\mathbb R^n}f(x)\,dx}
\right)
\left(\int_{\mathbb R^n} f(x)\,dx\right)^{\frac{n+\lambda}{n}} 
\end{equation}
with
\[
\mathscr L(n,\lambda,1)
=
\frac{n}{\lambda e}
\left(
\frac{
n
\Gamma\!\left(\frac n2\right)}
{
2\pi^{n/2}
\Gamma\!\left(\frac n\lambda+1\right)}
\right)^{\frac{\lambda}{n}},
\]
and equality holds if and only if
\[
f(x)=e^{-|x|^\lambda}
\]
up to dilations and constant multiples;

\item if $q>1$, then
\begin{equation}\label{CLI+}
\int_{\mathbb R^n} f(x)|x|^\lambda\,dx
\ge
\mathscr L(n,\lambda,q)
\,
\|f\|_{L^1(\mathbb R^n)}^{\frac{\lambda q+n(q-1)}{n(q-1)}}
\,
\|f\|_{L^q(\mathbb R^n)}^{-\frac{\lambda q}{n(q-1)}}
\end{equation}
with
\[
\mathscr L(n,\lambda,q)
=
\frac{n(q-1)}{\lambda q+n(q-1)}
\left(
\frac{\lambda^q q}
     {\lambda q+n(q-1)}
\right)^{\frac{\lambda}{n(q-1)}}
\left(
\frac{
\Gamma\!\left(\frac n2\right)
\Gamma\!\left(\frac q{q-1}+\frac n\lambda\right)}
{
2\pi^{n/2}
\Gamma\!\left(\frac q{q-1}\right)
\Gamma\!\left(\frac n\lambda\right)}
\right)^{\frac{\lambda}{n}}, 
\]
and equality holds if and only if
\[
f(x)=(1-|x|^\lambda)_+^{\frac1{q-1}}
\]
up to dilations and constant multiples.

\end{enumerate}
\end{lem}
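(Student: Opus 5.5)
The plan is to prove all three cases in one way. First, we write down the explicit candidate extremal $g$. Second, we obtain a linear pointwise inequality from the convexity or concavity of $s\mapsto s^q$ (or $s\log s$), evaluated at $a=g(x)$. Third, we integrate, and finally we optimise over the two-parameter family of dilations and constant multiples to turn the additive bound into the multiplicative one. No rearrangement is needed, although one could first note that $\int f|x|^\lambda\,dx\ge\int f^*|x|^\lambda\,dx$ by the Hardy--Littlewood rearrangement inequality, since $|x|^\lambda$ is radially increasing. This would reduce the problem to radial non-increasing $f$, but the convexity argument works directly for general $f\ge0$.

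\emph{Case $\frac{n}{n+\lambda}<q<1$.} Let $g(x)=(1+|x|^\lambda)^{-\frac1{1-q}}$, so that $g^{q-1}=1+|x|^\lambda$. The condition $q>\frac{n}{n+\lambda}$ is exactly what makes $\int g^q$, $\int g$ and $\int |x|^\lambda g$ finite. In polar coordinates with $r^\lambda=s$, these are Beta integrals, which produce the $\Gamma$-factors in $\mathscr L(n,\lambda,q)$. Strict concavity of $s\mapsto s^q$ gives, pointwise,
\[
f^q\le g^q+q\,g^{q-1}(f-g)=g^q+q(1+|x|^\lambda)(f-g),
\]
with equality iff $f(x)=g(x)$. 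Integrating yields
\[
\int f^q\,dx-q\int f\,dx-q\int |x|^\lambda f\,dx\le \int g^q\,dx-q\int g\,dx-q\int|x|^\lambda g\,dx=:A .
\]
Next, apply this to $\mu f(Rx)$ for arbitrary $\mu,R>0$. With $M_1=\|f\|_1$, $M_q=\|f\|_q^q$ and $W=\int|x|^\lambda f\,dx$, this gives for all $\mu,R>0$
\[
\mu^qR^{-n}M_q-q\mu R^{-n}M_1-q\mu R^{-n-\lambda}W\le A .
\]
Maximising the left side over $\mu,R$ is an elementary two-variable calculus problem. It yields a lower bound for $W$ of the form $c\,M_q^{\frac{\lambda}{n(1-q)}}M_1^{1-\frac{q\lambda}{n(1-q)}}$, with $c$ determined by $A$. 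Since equality is attained for $f=g$, and hence for its dilations and multiples, $c$ is the sharp constant $\mathscr L(n,\lambda,q)$. Equality forces $\mu f(R\,\cdot)=g$ a.e., by strict concavity.

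\emph{Case $q>1$.} Take $g=(1-|x|^\lambda)_+^{\frac1{q-1}}$ and use convexity, $f^q\ge g^q+q g^{q-1}(f-g)$. On $\{|x|<1\}$ we have $g^{q-1}=1-|x|^\lambda$. On $\{|x|\ge1\}$, $g=0$ and we need $f^q\ge 0\ge q(1-|x|^\lambda)f$, which holds because $1-|x|^\lambda\le0$ there. Therefore
\[
f^q\ge g^q+q(1-|x|^\lambda)(f-g)\quad\text{everywhere.}
\]
Integrating and rescaling as before gives \eqref{CLI+} with the stated constant. The compact support of $g$ causes no integrability issue.

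\emph{Case $q=1$.} Use convexity of $s\log s$ at $a=g=e^{-|x|^\lambda}$, namely $f\log f\ge g\log g+(1+\log g)(f-g)$, so that $f\log f\ge -|x|^\lambda f+f-g$ up to the $g$-terms. This is Gibbs' inequality. After integrating, optimise over $\mu f(Rx)$ again; the $\log\mu$ term now produces the exponential in \eqref{LCLI}.

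The main obstacle is the optimisation step: carrying out the maximisation over $(\mu,R)$ cleanly, and checking that the resulting constant matches the closed forms of $\mathscr L(n,\lambda,q)$. Doing this requires evaluating $\int g^q$, $\int g$ and $\int|x|^\lambda g$ as $\Gamma$-ratios and simplifying. To handle it, I would first normalise $M_1=M_q=1$ using the same scaling invariance, which reduces the problem to one variable. I would then use the Euler--Lagrange identity $\int g^q=\dots$ satisfied by $g$ to relate the three Beta integrals before substituting.
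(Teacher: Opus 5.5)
The paper does not prove this lemma. It quotes the statement from Nguyen and refers to Lutwak--Yang--Zhang for a proof. There it appears as a moment--R\'enyi-entropy inequality whose extremals are the generalized Gaussians, obtained by a H\"older-type relative-entropy comparison with those profiles. Your proposal is a correct, self-contained proof of all three cases by the elementary form of the same idea. You use the tangent-line inequality for $s^q$ (or $s\log s$) at $s=g(x)$, and then optimize over the scaling group $(\mu,R)$.

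The optimization closes up exactly. Since $g\cdot g^{q-1}=g^q$ (and $(1-|x|^\lambda)g=g^q$ on the support when $q>1$), the constant in your additive bound is $(1-q)\int g^q$, and $-\int g$ when $q=1$. Optimizing first in $\mu$ and then in $s=R^{-\lambda}$, with $\kappa=\frac{n(1-q)}{(n+\lambda)q-n}$ and $\kappa'=\frac{n(q-1)}{\lambda q+n(q-1)}$, gives
\begin{gather*}
\mathscr L(n,\lambda,q)=\kappa(1+\kappa)^{-\frac{q\lambda}{n(1-q)}}\Bigl(\int_{\mathbb R^n} g^q\,dx\Bigr)^{-\frac{\lambda}{n}}\quad(q<1),\qquad
\mathscr L(n,\lambda,1)=\frac{n}{\lambda e}\Bigl(\int_{\mathbb R^n} g\,dx\Bigr)^{-\frac{\lambda}{n}},\\
\mathscr L(n,\lambda,q)=\kappa'(1-\kappa')^{\frac{q\lambda}{n(q-1)}}\Bigl(\int_{\mathbb R^n} g^q\,dx\Bigr)^{-\frac{\lambda}{n}}\quad(q>1).
\end{gather*}
Evaluating $\int g^q$ and $\int g$ as Beta integrals reproduces the stated closed forms. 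For this use $\Gamma(\frac{1}{1-q})=\frac{q}{1-q}\Gamma(\frac{q}{1-q})$ and its analogue for $q>1$.

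The hypothesis $q>\frac{n}{n+\lambda}$ enters where you say, and also to make $s^{n/\lambda}(M_1+sW)^{-q/(1-q)}$ bounded as $s\to\infty$. The extremum in $(\mu,R)$ is attained at an interior point, so equality in the final inequality forces pointwise equality a.e.\ there. This gives the equality cases. For $q>1$, equality in $f^q\ge 0\ge q(1-|x|^\lambda)f$ on $\{|x|\ge1\}$ forces $f=0$ there.

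A few routine details should be stated explicitly:
\begin{enumerate}
\item For $q>1$ the step in $\mu$ is a minimization, and only the range $R^{-\lambda}W<M_1$ is relevant.
\item The degenerate cases $W=\infty$, $\|f\|_q=\infty$ and $\int f\log f=+\infty$ are trivial.
\item For $q=1$, your pointwise bound $f\log f\ge f-g-|x|^\lambda f$ shows $(f\log f)_-\in L^1$ whenever $\int(1+|x|^\lambda)f<\infty$, so the entropy is well defined.
\end{enumerate}

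Compared with citing, your route costs some calculus. In return it is fully elementary, uniform in $q$, needs no rearrangement, and yields the sharp constant and the equality characterization directly from strict convexity.
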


\subsection{Rough reverse HLS inequality on \texorpdfstring{$\mathbb{R}^n$}{Rn}}
Define 
\[
F[f]
:=\frac{\|E_\alpha f\|_{L^{t^\prime}(\R^n)}}{\|f\|^\gamma_{L^1(\R^n)}\|f\|^{1-\gamma}_{L^q(\R^n)}}.
\]
Consider the following minimization problem:
\begin{equation}\label{mini-problem-Rn} 
\mathscr{C}(n,\alpha,q,t)
:=
\inf\left\{
    F[f]:0\le f\in L^1 (\R^n)\cap L^q(\R^n), f \not\equiv 0 
    \right\}.
\end{equation}

\medskip
We first give a necessary and sufficient condition for the inequality ($\ref{ineq-main-Rn}$).

\begin{prop}\label{Prop-SNC-Rn}  
Let $1 \le n < \alpha$ and $\frac{n}{\alpha}<t<1$. 
\begin{enumerate}
\item If  $0 < q \le \frac{n}{\alpha}$, then $\mathscr{C}(n,\alpha,q,t)=0$.

\item If $\frac{n}{\alpha} < q < 1$, then $\mathscr{C}(n,\alpha,q,t) > 0$.
\end{enumerate}
\end{prop}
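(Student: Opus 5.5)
For part (ii) I will reduce to radial decreasing functions and then use a pointwise lower bound on the potential followed by the Carlson--Levin inequality. By Lemma \ref{lemma-rearrangement}, $\|E_\alpha f^*\|_{L^{t'}}\le \|E_\alpha f\|_{L^{t'}}$ while the $L^1$ and $L^q$ norms are preserved, so it suffices to bound $F[f]$ from below for $f=f^*$. The functional is invariant under $f\mapsto c f$ and under dilations $f\mapsto f(\cdot/\mu)$; this is exactly how $\gamma$ was chosen. We may therefore normalize $\|f\|_{L^1}=1$ and fix one further scale, say $\|f\|_{L^q}=1$.

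The key pointwise bound uses $\alpha-n>0$ and the elementary estimate $|x-y|\ge \bigl||x|-|y|\bigr|$. Instead of that, I would rather use $|x-y|^{\alpha-n}\ge c\,(|x|^{\alpha-n}+|y|^{\alpha-n})$ for $x,y$ in suitable half-spaces. For a radial decreasing $f$, averaging over $y$ and $-y$ gives $\max(|x-y|,|x+y|)\ge \sqrt{|x|^2+|y|^2}$. Hence
\[
E_\alpha f(x)\ge c_{n,\alpha}\int_{\R^n}\bigl(|x|^{2}+|y|^{2}\bigr)^{\frac{\alpha-n}{2}}f(y)\,dy
\ge c\Bigl(|x|^{\alpha-n}\|f\|_{L^1}+\int_{\R^n}|y|^{\alpha-n}f(y)\,dy\Bigr).
\]
Write $M:=\int|y|^{\alpha-n}f\,dy$. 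Then $E_\alpha f(x)\ge c\,(M+|x|^{\alpha-n})$, and since $t'<0$,
\[
\int (E_\alpha f)^{t'}\le c^{t'}\int_{\R^n}(M+|x|^{\alpha-n})^{t'}dx = c'\,M^{t'+\frac{n}{\alpha-n}}.
\]
This integral is finite precisely because $t'(\alpha-n)<-n$, i.e.\ $t>n/\alpha$. Taking the $1/t'$ power, and noting that $1+\frac{n}{t'(\alpha-n)}>0$ in this range, gives $\|E_\alpha f\|_{L^{t'}}\ge c''M^{1+\frac{n}{t'(\alpha-n)}}$.

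Next I apply Lemma \ref{lemma-CLI}(i) with $\lambda=\alpha-n$. Its hypothesis $q>\frac{n}{n+\lambda}=\frac n\alpha$ is exactly our assumption. It bounds $M$ below by a positive constant times $\|f\|_{L^q}^{\frac{q\lambda}{n(1-q)}}\|f\|_{L^1}^{1-\frac{q\lambda}{n(1-q)}}$. Combining this with the previous step gives a positive lower bound for $F[f]$. The exponents must match $\gamma$ by homogeneity, since both sides scale identically under $f\mapsto cf(\cdot/\mu)$. A direct check confirms it: $\bigl(1-\frac{q(\alpha-n)}{n(1-q)}\bigr)\bigl(1+\frac{n}{t'(\alpha-n)}\bigr)$ should equal $\gamma$, and I will verify this algebraically as a consistency check.

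For part (i), with $q\le n/\alpha$, I will exhibit a sequence for which $F\to0$. A natural choice is $f_\varepsilon(x)=(1+|x|)^{-\alpha}\cdot(1+\varepsilon|x|)^{-\delta}$, or a truncation $f_R=(1+|x|)^{-n/q}\mathbf 1_{|x|<R}$. As the truncation is removed, $\|f\|_{L^q}\to\infty$ logarithmically when $q\alpha=n$, or polynomially when $q<n/\alpha$. Meanwhile $\|f\|_{L^1}$ stays bounded and $\|E_\alpha f\|_{L^{t'}}$ stays bounded above, because $E_\alpha f\ge$ roughly $\int|y|^{\alpha-n}f$, which blows up, and a large $E_\alpha f$ means a small $L^{t'}$ norm when $t'<0$. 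I must track signs carefully. When $q\le n/\alpha$ we have $\gamma\le0$ if $\frac{n}{t'}$ enters appropriately, so $1-\gamma\ge1$ and the $L^q$ factor appears with a positive power. I would choose $f_R=|x|^{-n/q}\cdot$ (truncated at $1$ and $R$), or more cleanly $f=\mathbf 1_{B_1}+\varepsilon\mathbf 1_{B_R}$ with $\varepsilon R^{n}$ fixed small and $\varepsilon^qR^n\to\infty$, which is possible exactly when $q<1$. For these $f$, $E_\alpha f\gtrsim \varepsilon R^{\alpha}$ on $B_R$ and $\gtrsim |x|^{\alpha-n}$ elsewhere. I would then estimate $\|E_\alpha f\|_{L^{t'}}$ from above and compare with $\|f\|_{L^1}^\gamma\|f\|_q^{1-\gamma}$.

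The main obstacle is this part (i) construction. The parameters $\varepsilon,R$ must be tuned so that the ratio tends to zero exactly when $q\le n/\alpha$, including the borderline case, which requires a logarithmic divergence from the Carlson--Levin extremal profile $|x|^{-\alpha}$. I would treat the borderline case with $f=(1+|x|)^{-\alpha}\mathbf 1_{|x|<R}$ and extract the $\log R$ growth.
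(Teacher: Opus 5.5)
Your part (ii) is correct, and it gets the key pointwise bound by a different route from the paper. The paper splits $\R^n$ into three annuli, using the monotonicity of $f$ on the middle one, and separately uses $E_\alpha f\ge E_\alpha f(0)$, which relies on the radial monotonicity of $E_\alpha f^*$. Your parallelogram-law trick instead gives
\[
E_\alpha f(x)=\tfrac12\int_{\R^n}\bigl(|x-y|^{\alpha-n}+|x+y|^{\alpha-n}\bigr)f(y)\,dy\ge \tfrac12\min\bigl\{1,2^{\frac{\alpha-n}{2}-1}\bigr\}\Bigl(\|f\|_{L^1}|x|^{\alpha-n}+\int_{\R^n}|y|^{\alpha-n}f(y)\,dy\Bigr)
\]
for every even $f\ge0$, in one line and with an explicit constant. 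Only your announced consistency check is off. Set $a=\frac{q(\alpha-n)}{n(1-q)}$ and $b=1+\frac{n}{t'(\alpha-n)}\in(0,1)$. Because the $|x|^{\alpha-n}$ term carries the factor $\|f\|_{L^1}$, the exponent of $\|f\|_{L^1}$ is $1-ab$, not $(1-a)b$. Your double normalization makes this harmless.

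Part (i) has a genuine gap: the mechanism you rely on runs backwards. For $t'<0$ the map $g\mapsto\bigl(\int g^{t'}\bigr)^{1/t'}$ is increasing, and you use exactly this in (ii). So a large $E_\alpha f$ makes $\|E_\alpha f\|_{L^{t'}}$ large, not small. Indeed, your own bound from (ii) gives $\|E_\alpha f\|_{L^{t'}}\gtrsim\|f\|_{L^1}^{1-b}M^{b}$ with $M=\int|y|^{\alpha-n}f$. Also, $\gamma$ is not $\le0$ here: it decreases in $q$ and $\gamma(n/\alpha)=\frac{n}{t'(n-\alpha)}\in(0,1)$, so $0<\gamma<1$ for $q\le n/\alpha$.

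What is missing is the matching upper bound $|x-y|^{\alpha-n}\le\max\{1,2^{\alpha-n-1}\}(|x|^{\alpha-n}+|y|^{\alpha-n})$, used in the paper's Case~2. After the same integration it yields
\[
F[f]\le C\Bigl(\frac{M}{\|f\|_{L^1}}\Bigr)^{b}\Bigl(\frac{\|f\|_{L^1}}{\|f\|_{L^q}}\Bigr)^{ab}.
\]
So you need $\|f\|_{L^q}/\|f\|_{L^1}\to\infty$ while $M/\|f\|_{L^1}$ stays bounded or grows more slowly, which is exactly what Carlson--Levin rules out for $q>n/\alpha$.

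Your ``cleaner'' family $\mathbf 1_{B_1}+\varepsilon\mathbf 1_{B_R}$ with $\varepsilon R^n=\delta$ fixed was designed to make $E_\alpha f\gtrsim\varepsilon R^{\alpha}$ large, and it fails for this reason. Here $M\sim\delta R^{\alpha-n}$ and $\|f\|_{L^q}^q\sim\delta^qR^{n(1-q)}$, so the two-sided bounds give $F\asymp\delta^{\,b(1-a)}$ uniformly for large $R$. Hence $F\not\to0$ as $R\to\infty$: one must also send $\delta\to0$, and at $q=n/\alpha$, where $a=1$, this family stays bounded below.

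By contrast, once the upper bound is in place, your truncation $f_R=(1+|x|)^{-n/q}\mathbf 1_{\{|x|<R\}}$ works for all $q\le n/\alpha$. Here $\|f_R\|_{L^1}\asymp1$ and $\|f_R\|_{L^q}^q\gtrsim\log R$. When $q<n/\alpha$, $M$ stays bounded. When $q=n/\alpha$, $M\lesssim\log R$ and $ab=b$, so $F[f_R]\lesssim(\log R)^{b(1-\alpha/n)}\to0$. This single family would even replace the paper's concentration argument in Case~1 together with its separate logarithmic family in Case~2. As written, though, your bound on $\|E_\alpha f\|_{L^{t'}}$ rests on a false claim, and the parameter choice you single out does not give $F\to0$.
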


\begin{proof}
We first prove $(i)$, which is divided into following two cases.

\textbf{Case~1.}
Assume that $0 < q < \frac{n}{\alpha} < t < 1$, so that $\gamma<1$. 

In this case, we adapt the argument of \cite{2019-Carrillo-Delgadino-Dolbeault-Frank-Hoffmann-JMPA}. 
Define 
\[
I_\alpha[f]
:=
\int_{\mathbb{R}^n} f(x)\,E_\alpha f(x)\,dx
=
\int_{\mathbb{R}^n}\int_{\mathbb{R}^n}
f(x)\,|x-y|^{\alpha-n}\,f(y)\,dy\,dx .
\]
By H\"older's inequality, for every nonnegative
\(0\not\equiv f\in L^1\cap L^q(\mathbb{R}^n)\), we have
\[
\mathscr{C}(n,\alpha,q,t)
\le
F[f]
=
\frac{\|E_\alpha f\|_{t^\prime}}{\|f\|_1^{\gamma}\,\|f\|_q^{1-\gamma}}
\le
\frac{I_\alpha[f]}{\|f\|_1^{\gamma}\,\|f\|_q^{1-\gamma}\,\|f\|_t}
=: G[f].
\]
Therefore, it suffices to construct a family \(\{f_\varepsilon\}\) such that
\[
\lim_{\varepsilon \to 0} G[f_\varepsilon] = 0.
\]

Let $f, g \in C_c^{\infty}(\mathbb{R}^n)$ be nonnegative functions and
\[
\int_{\mathbb{R}^n} g(x)\,dx = 1.
\]
For \(\varepsilon>0\) sufficiently small, define
\[
f_\varepsilon(x) := f(x) + \varepsilon^{-\alpha}\,g\!\left(\frac{x}{\varepsilon}\right).
\]
A direct computation yields
\begin{equation}\label{NE-1}  
\|f_\varepsilon\|_{L^1(\R^n)}
=
\|f\|_{L^1(\R^n)} + \varepsilon^{\,n-\alpha}.
\end{equation}
Moreover, using the elementary inequality
\begin{equation}\label{elementary inequality}
\min\{1,2^{1-a}\}(x+y)^a\le x^a+y^a
\le
\max\{1,2^{1-a}\}(x+y)^a,
\ \  a,x,y\ge0,
\end{equation}
we obtain
\begin{equation}\label{NE-2}
\|f_\varepsilon\|_{L^q(\R^n)}^q
\ge
C_1
\bigl(
\|f\|_{L^q(\R^n)}^q+\varepsilon^{\,n-q\alpha}
\bigr),
\end{equation}
and similarly, 
\begin{equation}\label{NE-3}
\|f_\varepsilon\|_{L^t(\R^n)}^t
\ge
C_2
\bigl(
\|f\|_{L^t(\R^n)}^t+\varepsilon^{\,n-t\alpha}
\bigr).
\end{equation}
Finally, expanding \(I_\alpha[f_\varepsilon]\), we get 
\begin{align}
I_\alpha[f_\varepsilon]
&=
I_\alpha[f]
+
2\varepsilon^{-\alpha}
\int_{\mathbb{R}^n}
\int_{\mathbb{R}^n}
f(x)|x-y|^{\alpha-n}
\sigma\!\left(\frac{y}{\varepsilon}\right)
\,dx\,dy
\nonumber\\
&\quad
+
\varepsilon^{-2\alpha}
\int_{\mathbb{R}^n}
\int_{\mathbb{R}^n}
\sigma\!\left(\frac{x}{\varepsilon}\right)
|x-y|^{\alpha-n}
\sigma\!\left(\frac{y}{\varepsilon}\right)
\,dx\,dy
\nonumber\\
&=
I_\alpha[f]
+
2\left(
\int_{\mathbb{R}^n}
f(x)|x|^{\alpha-n}\,dx
+o_\varepsilon(1)
\right)
\varepsilon^{\,n-\alpha}
+
C_3\varepsilon^{\,n-\alpha}.   \label{NE-4}
\end{align}
Here \(o_\varepsilon(1)\to0\) as \(\varepsilon\to0\).

Combining \eqref{NE-1}--\eqref{NE-4}, and using 
\[
q\alpha<n<t\alpha<\alpha,
\]
we deduce that
\begin{align*}
G[f_\varepsilon]
&\le
\frac{
I_\alpha[f]+C_4\varepsilon^{\,n-\alpha}
}{
\left(
\|f\|_{L^1(\R^n)}+\varepsilon^{\,n-\alpha}
\right)^\gamma
\left[
C_1
\left(
\|f\|_{L^q(\R^n)}^q+\varepsilon^{\,n-q\alpha}
\right)
\right]^{\frac{1-\gamma}{q}}
\left[
C_2
\left(
\|f\|_{L^t(\R^n)}^t+\varepsilon^{\,n-t\alpha}
\right)
\right]^{\frac1t}
}
\\
&=
\frac{
C_4\varepsilon^{\,n-\alpha}
}{
\left(C_1\|f\|_{L^q(\R^n)}^q\right)^{\frac{1-\gamma}{q}}
C_2^{\,1/t}
\,
\varepsilon^{\,\gamma(n-\alpha)+\frac{n}{t}-\alpha}
}
\bigl(1+o_\varepsilon(1)\bigr).
\end{align*}
Thus \(G[f_\varepsilon]\to0\) provided that
\[
n-\alpha > \gamma(n-\alpha) + \frac{n}{t} - \alpha,
\]
or equivalently,
\[
\gamma > \frac{n}{t^\prime(n-\alpha)}.
\]
This is immediate since \(\gamma(q)\) is strictly decreasing in \(q\in(0,1)\), and
\[
\gamma\!\left(\frac{n}{\alpha}\right)
=
\frac{n}{t^\prime(n-\alpha)}.
\]
Therefore, we conclude that $\mathscr{C}(n,\alpha,q,t)=0$ in the case $0<q<\frac{n}{\alpha}$.

\textbf{Case 2.}
Assume that $q=\frac{n}{\alpha}<t<1$. 

Define
\[
f_k(x)
:=
\begin{cases}
e^{-\alpha},
&
|x|\le e,
\\[0.3em]
|x|^{-\alpha}
(\log|x|)^{-\frac{\alpha}{n}\left(1+\frac1k\right)},
&
|x|>e.
\end{cases}
\]
It is straightforward to verify that 
\[
0\not\equiv f_k\in L^1(\R^n)\cap L^q(\mathbb{R}^n).
\]
We claim that
$$
\lim\limits_{k\to\infty}F[f_k]=\lim\limits_{k\to\infty}\frac{\|E_\alpha f_k\|_{L^{t^\prime}(\R^n)}}{\|f_k\|_{L^1(\R^n)}^{\gamma}\,\|f_k\|_{L^q(\R^n)}^{1-\gamma}}=0.
$$
Indeed, elementary estimates show that
\[
\|f_k\|_{L^q(\mathbb{R}^n)}\to\infty
\quad \text{as } k\to\infty,
\]
and that there exists a constant $M>0$, independent of $k$, such that
\begin{equation}\label{pf-2.1(1)-}
\frac1M
\le
\int_{\mathbb{R}^n}f_k(y)\,dy
\le
M,
\ \ 
\int_{\mathbb{R}^n}
|y|^{\alpha-n}f_k(y)\,dy
\le
M.
\end{equation}
It remains to establish a uniform upper bound for $\|E_\alpha f_k\|_{L^{t^\prime}(\R^n)}$.

Using  \eqref{elementary inequality}, we estimate
\begin{align*}
E_\alpha f_k(x) 
&= \int_{\mathbb{R}^n} f_k(y)\,|x-y|^{\alpha-n} \, dy \\
&\le \max\{1,2^{\alpha-n-1}\}
\int_{\mathbb{R}^n} f_k(y)\,\bigl(|x|^{\alpha-n}+|y|^{\alpha-n}\bigr)\,dy \\
&\le \max\{1,2^{\alpha-n-1}\}\, M\, \bigl(|x|^{\alpha-n}+1\bigr).
\end{align*}
Recall the Beta function
\[
B(r,s)=\int_0^1 u^{r-1}(1-u)^{s-1}du,
\ \  r,s>0.
\]
A direct computation  yields 
\begin{equation}\label{Beta Type Function}
\int_0^\infty
v^{\lambda r-1}
(v^\lambda+a)^{-(r+s)}
\,dv
=
\frac{1}{\lambda a^s}
B(r,s),
\end{equation}
for all $a,\lambda,p,q>0$. 
Consequently, 
\begin{align*}
\|E_\alpha f_k\|_{L^{t^\prime}(\R^n)} 
&\le \max\{1,2^{\alpha-n-1}\} M
\Biggl(
\sigma(n)\int_0^\infty r^{n-1}\left(r^{\alpha-n}+1\right)^{t^\prime}\,dr
\Biggr)^{\frac{1}{t^\prime}}\\
&= \max\{1,2^{\alpha-n-1} \} M
\Biggl(
\frac{\sigma(n)}{\alpha-n}
\,
B\Bigl(\frac{n}{\alpha-n},-t^\prime-\frac{n}{\alpha-n}\Bigr)
\Biggr)^{\frac{1}{t^\prime}},
\end{align*}
where $\sigma(n)$ denotes the surface measure of the unit sphere in $\mathbb{R}^n$.
Since $\|E_\alpha f_k\|_{L^{t^\prime}(\R^n)}$ is uniformly bounded in \(k\) while \(\|f_k\|_{L^q(\R^n)}\to\infty\) as \(k\to\infty\), we conclude that
\(F[f_k]\to0\) as \(k\to\infty\).

\medskip

To prove \textit{(ii)}, it suffices by Lemma~\ref{lemma-rearrangement} to consider radially symmetric non-increasing functions.

We first assume $\gamma\neq0$ and establish a pointwise lower bound for $E_\alpha f$. 
We claim that there exists a constant $C>0$, independent of $f$ and $x$, such that
\begin{equation}\label{LB_E}
E_\alpha f(x) \ge C \|f\|_{L^{1}(\R^n)} |x|^{\alpha-n},
\quad \forall x \in \mathbb{R}^n\setminus\{0\},
\quad 0 \le f = f^* \in L^1(\mathbb{R}^n).
\end{equation}
In fact, decompose \(\mathbb{R}^n=\Omega_1\cup\Omega_2\cup\Omega_3\), where
\begin{align*}
\Omega_1 &:= \left\{ |y| < \frac{|x|}{2} \right\}, \\
\Omega_2 &:= \left\{ \frac{|x|}{2} \le |y| < \frac{3|x|}{2} \right\}, \\
\Omega_3 &:= \left\{ |y| \ge \frac{3|x|}{2} \right\}.
\end{align*}
Let
\[
I_i := \int_{\Omega_i} |x-y|^{\alpha-n} f(y)\,dy,
\ \  i=1,2,3.
\]
For $I_1$ and $I_3$, we directly have
\begin{equation}\label{SI_1}
I_1 \ge 2^{\,n-\alpha}|x|^{\alpha-n}\int_{\Omega_1} f(y)\,dy,
\ \ 
I_3 \ge 2^{\,n-\alpha}|x|^{\alpha-n}\int_{\Omega_3} f(y)\,dy .
\end{equation}
For $I_2$, set
\[
\Omega_{21}:=\left\{ \tfrac{|x|}{2}\le |y|<\tfrac{3|x|}{4} \right\},
\ \  
C(n):=\frac{|\Omega_2|}{|\Omega_{21}|}
      =\frac{6^{n}-2^{n}}{3^{n}-2^{n}} .
\]
Since $f$ is radially non-increasing, it follows that
\[
\int_{\Omega_2} f(y)\,dy
\le C(n)\int_{\Omega_{21}} f(y)\,dy .
\]
Moreover, for \(y\in\Omega_{21}\), we have \(|x-y|\ge |x|-|y|\ge \frac{|x|}{4}\). Hence
\begin{equation}\label{SI_2}
I_2
\ge 4^{\,n-\alpha}|x|^{\alpha-n}\int_{\Omega_{21}} f(y)\,dy
\ge \frac{4^{\,n-\alpha}}{C(n)}\,|x|^{\alpha-n}
   \int_{\Omega_2} f(y)\,dy .
\end{equation}
Combining \eqref{SI_1} and \eqref{SI_2}, we may take
\[
C:=\min\left\{2^{\,n-\alpha},\frac{4^{\,n-\alpha}}{C(n)}\right\},
\]
which yields \eqref{LB_E}.


On the other hand, by Lemma \ref{lemma-rearrangement} we know that $E_\alpha f^*$ is radial and strictly increasing. Hence,
\begin{equation}\label{es-rearrange}
E_\alpha f(x)
\ge
E_\alpha f(0)
=
\int_{\mathbb{R}^n} |y|^{\alpha-n} f(y)\,dy .
\end{equation}
Let \(\mathscr C_0=\frac12\min\{1,C\}\). 
Then
\begin{equation}    \label{es-lower-bound-Ef-Rn}
E_\alpha f(x)\ge
\mathscr C_0
\left(
\|f\|_{L^1(\mathbb{R}^n)}|x|^{\alpha-n}
+
\int_{\mathbb{R}^n}|y|^{\alpha-n}f(y)\,dy
\right).
\end{equation}
Consequently,
\[
\|E_\alpha f\|_{L^{t^\prime}(\R^n)}
\ge \mathscr C_0
\left( \int_{\mathbb{R}^n}
\left(
\|f\|_{L^{1}(\R^n)} |x|^{\alpha-n}
+ \int_{\mathbb{R}^n} |y|^{\alpha-n} f(y)\,dy
\right)^{t^\prime} dx
\right)^{\frac{1}{t^\prime}}.
\]
Applying the Carlson--Levin inequality \eqref{CLI} yields
\[
\scalebox{0.94}{$\displaystyle
\|E_\alpha f\|_{L^{t^\prime}(\R^n)}
\ge \mathscr C_0 \|f\|_{L^{1}(\R^n)}
\left(
\sigma(n)
\int_0^\infty r^{n-1}
\left(
r^{\alpha-n}
+
\mathscr L(n, \alpha-n,q)\left(
\frac{\|f\|_{L^{q}(\R^n)}}{\|f\|_{L^{1}(\R^n)}}
\right)^{\frac{q(\alpha-n)}{n(1-q)}}
\right)^{t^\prime} dr
\right)^{\frac{1}{t^\prime}}.
$}
\]
Using the integral identity \eqref{Beta Type Function}, it follows that
\[
\|E_\alpha f\|_{L^{t^\prime}(\R^n)}
\ge 
\mathscr{C}'
\, \|f\|_{L^{1}(\R^n)}^{
1 - \frac{q(\alpha-n)}{n(1-q)}
\left(1+\frac{n}{t^\prime(\alpha-n)}\right)
}
\, \|f\|_{L^{q}(\R^n)}^{
\frac{q(\alpha-n)}{n(1-q)}
\left(1+\frac{n}{t^\prime(\alpha-n)}\right)
}.
\]
Since
\[
1-\gamma
=
\frac{q(\alpha-n)}{n(1-q)}
\left(1+\frac{n}{t^\prime(\alpha-n)}\right),
\]
we conclude that
\[
\mathscr{C}(n,\alpha,q,t)\ge \mathscr{C}'>0.
\]
Moreover, the constant \(\mathscr{C}'\) is explicitly given by
\[
\mathscr{C}'
=\mathscr C_0 \left[
\frac{\sigma(n)}{\alpha - n}\, 
\bigl(\mathscr{L}(n,\alpha-n,q)\bigr)^{
t^\prime + \frac{n}{\alpha - n}}  \,
B\!\left(\frac{n}{\alpha - n},\, -t^\prime - \frac{n}{\alpha - n}\right)\, 
\right]^{\frac{1}{t^\prime}}.
\]

If $\gamma=0$, then the minimization problem \eqref{mini-problem-Rn} becomes
\begin{equation*}
\mathscr{C}(n,\alpha,q,t):=\inf\left\{F[f]:=\frac{\|E_\alpha f\|_{L^{t^\prime}(\R^n)}}{\|f\|_{L^q(\R^n)}}:0\le f\in L^q(\R^n), f \not\equiv 0 \right\}.
\end{equation*}
We can assume $f\in C_0^\infty(\R^n)$, argue as above and apply approximation process to end the proof.
\end{proof}

\begin{coro}\label{coro-extremal-functions}
Let $\alpha=n+2$ and $\frac{n}{n+2}<q,t<1$. 
Then the extremal functions for \eqref{ineq-main-Rn} are given by
\[
f(x) = \bigl(1+|x|^2\bigr)^{-\frac{1}{1-q}},
\]
up to translations, dilations, and constant multiples.
Moreover, the sharp constant is
\[
\scalebox{0.91}{$\displaystyle
\mathscr{C}(n,n+2,q,t)
=
\frac{1}{\pi}
\left(
\frac{n(1-q)}{q(n+2)-n}
\left(\frac{q(n+2)-n}{2q}\right)^{\frac{2}{n(1-q)}}
\left(\frac{\Gamma\!\left(\frac{1}{1-q}\right)}{\Gamma\!\left(\frac{1}{1-q}-\frac{n}{2}\right)}\right)^{\frac{2}{n}}
\right)^{1+\frac{n}{2t^\prime}}
\left(
\frac{\Gamma\!\left(-t^\prime-\frac{n}{2}\right)}{\Gamma(-t^\prime)}
\right)^{\frac{1}{t^\prime}}
$}.
\]
\end{coro}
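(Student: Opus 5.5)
The plan is to exploit the fact that for $\alpha=n+2$ the kernel $|x-y|^{\alpha-n}=|x-y|^2$ is a quadratic polynomial. This turns $E_{n+2}f$ into an explicit function of three moments of $f$, so the problem reduces exactly (with no loss) to the Carlson--Levin inequality of Lemma~\ref{lemma-CLI}(i) with $\lambda=2$.

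\textbf{Step 1: exact formula for $E_{n+2}f$.} Let $0\le f\in L^1\cap L^q(\R^n)$ with $M:=\|f\|_{L^1(\R^n)}>0$.

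If $\int|y|^2f\,dy=\infty$, then $E_{n+2}f\equiv\infty$. Since $t^\prime<0$, this gives $\int (E_{n+2}f)^{t^\prime}dx=0$, hence $\|E_{n+2}f\|_{L^{t^\prime}}=\infty$ and $F[f]=\infty$. Such $f$ are irrelevant for the infimum.

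Otherwise the barycenter $\bar x:=M^{-1}\int y f(y)\,dy$ is finite. Expanding $|x-y|^2=|x-\bar x|^2-2(x-\bar x)\cdot(y-\bar x)+|y-\bar x|^2$ and integrating against $f$, the cross term vanishes, so
\[
E_{n+2}f(x)=M\bigl(|x-\bar x|^2+a\bigr),\qquad a:=\frac{1}{M}\int_{\R^n}|y-\bar x|^2f(y)\,dy .
\]
By translation invariance of all norms involved we may take $\bar x=0$. Note that no rearrangement is needed here.

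\textbf{Step 2: compute the $L^{t^\prime}$ norm.} Use polar coordinates and the Beta identity \eqref{Beta Type Function} with $\lambda=2$. The integral converges because $t>\frac{n}{n+2}$ gives $-t^\prime>\frac n2$. This yields
\[
\|E_{n+2}f\|_{L^{t^\prime}(\R^n)}=M\,a^{1+\frac{n}{2t^\prime}}\Bigl(\tfrac{\sigma(n)}{2}B\bigl(\tfrac n2,-t^\prime-\tfrac n2\bigr)\Bigr)^{1/t^\prime}.
\]
The exponent satisfies $1+\frac{n}{2t^\prime}>0$, so the right-hand side is strictly increasing in $a$.

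\textbf{Step 3: apply Carlson--Levin.} Since $q>\frac{n}{n+2}$, Lemma~\ref{lemma-CLI}(i) with $\lambda=2$ gives
\[
Ma=\int|y|^2f\ge\mathscr L(n,2,q)\,\|f\|_q^{\frac{2q}{n(1-q)}}M^{1-\frac{2q}{n(1-q)}} .
\]
Substitute this into Step 2, using the monotonicity in $a$. The exponents of $\|f\|_1$ and $\|f\|_q$ should come out as $\gamma$ and $1-\gamma$: one has $1-\gamma=\frac{2q}{n(1-q)}\bigl(1+\frac{n}{2t^\prime}\bigr)$, which is the $\alpha=n+2$ case of the identity already used in Proposition~\ref{Prop-SNC-Rn}. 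This gives $F[f]\ge \mathscr L(n,2,q)^{1+\frac{n}{2t^\prime}}\bigl(\tfrac{\sigma(n)}2B(\tfrac n2,-t^\prime-\tfrac n2)\bigr)^{1/t^\prime}$.

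\textbf{Step 4: equality cases.} Steps 1--2 are identities and the monotonicity in $a$ is strict. Hence equality in \eqref{ineq-main-Rn} holds if and only if equality holds in Carlson--Levin for $f(\cdot+\bar x)$. By Lemma~\ref{lemma-CLI}(i) this means $f(x)=c\,(1+\mu|x-\bar x|^2)^{-\frac{1}{1-q}}$. This family is exactly the translations, dilations and multiples of $(1+|x|^2)^{-\frac1{1-q}}$. Each such function has barycenter $\bar x$ by symmetry and finite second moment because $q>\frac n{n+2}$, so it is consistent with Step 1 and attains the bound. The bound is therefore the sharp constant.

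\textbf{Step 5: simplify the constant.} Insert $\sigma(n)=2\pi^{n/2}/\Gamma(\frac n2)$, write $B(\frac n2,-t^\prime-\frac n2)=\Gamma(\frac n2)\Gamma(-t^\prime-\frac n2)/\Gamma(-t^\prime)$, and use the explicit $\mathscr L(n,2,q)$. The powers of $\pi$ should combine to $\pi^{-1}$, with the factor $\pi^{n/(2t^\prime)}$ cancelling against the corresponding power from $\mathscr L(n,2,q)^{n/(2t^\prime)}$. This bookkeeping of Gamma factors and powers of $\pi$, while tracking signs of exponents since $t^\prime<0$, is the only real obstacle; conceptually, the key step is the exact identity of Step 1.
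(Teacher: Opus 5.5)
Your proof is correct, and its core is the same as the paper's. For $\alpha=n+2$ the pointwise bound \eqref{es-lower-bound-Ef-Rn} becomes an identity with $\mathscr C_0=1$. The $L^{t^\prime}$ norm is then evaluated with \eqref{Beta Type Function}, everything reduces to Lemma~\ref{lemma-CLI}(i) with $\lambda=2$, and the equality case is inherited from that of Carlson--Levin.

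The one genuine difference is how the cross term is removed. The paper first passes to radially symmetric non-increasing $f$ via Lemma~\ref{lemma-rearrangement}, so that $\int y f\,dy=0$ by symmetry. Its claim that \emph{every} extremal is, up to translation, a dilate and multiple of $(1+|x|^2)^{-\frac{1}{1-q}}$ therefore tacitly uses the equality case of the rearrangement inequality in Lemma~\ref{lemma-rearrangement}(iii), whose proof is only cited.

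Your barycenter expansion gives $E_{n+2}f(x)=M(|x-\bar x|^2+a)$ for every admissible $f$ with finite second moment. So the inequality, the sharp constant and the full classification, including the translation parameter, follow directly from Carlson--Levin with no symmetrization. You also treat explicitly the case $\int|y|^2f\,dy=\infty$, which the paper leaves implicit. The paper's route fits the general-$\alpha$ framework of Proposition~\ref{Prop-SNC-Rn}; yours is more self-contained for this exponent.

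In Step~5, take $\mathscr L(n,2,q)$ from the general formula in Lemma~\ref{lemma-CLI}(i) and keep the factor $2$ in $2\pi^{n/2}$. Then $2^{-q}$ and $2^{-2/n}$ combine into the base $\frac{q(n+2)-n}{2q}$ with exponent $\frac{2}{n(1-q)}$, and the powers of $\pi$ cancel to $\pi^{-1}$, as you predicted. The specialized $\mathscr L(n,2,q)$ displayed in the paper's proof omits that factor $2$. The corollary's final formula agrees with the general one, so do not copy the specialized display.
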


\begin{proof}
It suffices to prove \eqref{ineq-main-Rn} for radially symmetric non-increasing functions. 
In this case,
\[
\int_{\mathbb{R}^n} y\,f(y)\,dy = 0.
\]
Consequently, the equality holds in \eqref{es-lower-bound-Ef-Rn} with $\mathscr C_0=1$, namely,
\[
E_2 f(x)
=\int_{\mathbb{R}^n}|x-y|^{2}f(y)\,dy
=\|f\|_{L^{1}(\R^n)}|x|^{2}
+\int_{\mathbb{R}^n}|y|^{2}f(y)\,dy .
\]
Therefore, by using the same arguments as the last part of Proposition \ref{Prop-SNC-Rn},  modulo translations, 
the equality in \eqref{ineq-main-Rn}
holds under the same conditions as the equality in the Carlson--Levin
inequality \eqref{CLI}.

Taking into account that 
\[
\mathscr{C}(n,n+2,q,t)
=
\left[
\frac{\sigma(n)}{2}
\bigl(\mathscr{L}(n,2,q)\bigr)^{\frac{n+2t^\prime}{2}}
B\!\left(\frac{n}{2},-t^\prime-\frac{n}{2}\right)
\right]^{\frac{1}{t^\prime}},
\]
where
\[
\mathscr L(n,2,q)
=
\frac{n(1-q)}{(n+2)q-n}
\left(
\frac{(n+2)q-n}
     {2^q q}
\right)^{\frac{2}{n(1-q)}}
\left(
\frac{
\Gamma\!\left(\frac1{1-q}\right)}
{
\pi^{n/2}
\Gamma\!\left(\frac1{1-q}-\frac n2\right)
}
\right)^{\frac{2}{n}}
\]
is the sharp constant in the Carlson--Levin inequality, we obtain the explicit expression for $\mathscr{C}(n,n+2,q,t)$.
\end{proof}

\subsection{Existence of extremal functions for reverse HLS inequality on \texorpdfstring{$\mathbb{R}^n$}{Rn}}
We next study the existence of minimizers for the minimization problem \eqref{mini-problem-Rn}. 

\begin{prop}\label{prop-Minimizer}   
Let $\frac{n}{\alpha}<t<1$, and $\frac{nt^\prime}{n+\alpha t^\prime}\le q<1$. 
Then the best constant $\mathscr{C}(n,\alpha,q,t)$ is achieved.
\end{prop}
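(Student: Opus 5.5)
We follow the direct method of the calculus of variations, adapted to the reverse setting along the lines of Carrillo et al. Take a minimizing sequence $\{f_j\}$ for \eqref{mini-problem-Rn}. By Lemma~\ref{lemma-rearrangement} we may replace each $f_j$ by $f_j^*$, so all $f_j$ are radially symmetric and non-increasing. The functional $F$ is invariant under $f\mapsto c f$ and under dilations $f\mapsto f(\cdot/\lambda)$; the exponent $\gamma$ was chosen by scaling exactly so that this holds. Using these two invariances we normalize
\[
\|f_j\|_{L^1(\R^n)}=\|f_j\|_{L^q(\R^n)}=1 .
\]
When $\gamma=0$ only the $L^q$ normalization is used, together with a dilation fixing, for instance, $\int |y|^{\alpha-n}f_j\,dy=1$.

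\textbf{Step 1: uniform bounds.} For radial non-increasing $f$ we have $f(x)\le C|x|^{-n}\|f\|_{L^1}$ and $f(x)\le C|x|^{-n/q}\|f\|_{L^q}$, so $f_j(x)\le C\min\{|x|^{-n},|x|^{-n/q}\}$ uniformly in $j$. Since $F[f_j]$ is bounded, \eqref{es-lower-bound-Ef-Rn} shows that $\int|y|^{\alpha-n}f_j\,dy$ is bounded: the $L^{t'}$ norm of the lower bound $\mathscr C_0(|x|^{\alpha-n}+m)$, with $m=\int|y|^{\alpha-n}f_j$, grows like $m^{1+n/(t'(\alpha-n))}$. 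By Helly's selection theorem, after passing to a subsequence $f_j\to f$ pointwise a.e., and $f$ is radial and non-increasing.

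\textbf{Step 2: passing to the limit.} On the denominator side, Fatou's lemma gives $\|f\|_1\le 1$ and $\|f\|_q\le 1$. The harder direction is to keep mass from escaping. Tightness at infinity comes from the moment bound, since $\int_{|y|>R} f_j\le R^{n-\alpha}\sup_j\int|y|^{\alpha-n}f_j\to0$ as $R\to\infty$. Together with the pointwise domination near the origin ($|x|^{-n/q}$ with $q<1$ is not integrable at $0$, so we use the $L^1$ equi-integrability furnished by $\|f_j\|_{L^q}=1$ and Hölder on small balls), this gives $\|f\|_{L^1}=1$. Hence $f\not\equiv0$.

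On the numerator side we need $\|E_\alpha f\|_{t'}\le\liminf_j\|E_\alpha f_j\|_{t'}$. Recall $t'<0$, so $\|g\|_{t'}=(\int g^{t'})^{1/t'}$ is a decreasing function of $\int g^{t'}$. It therefore suffices to show $\int (E_\alpha f_j)^{t'}\to\int (E_\alpha f)^{t'}$. Because the kernel grows like $|x-y|^{\alpha-n}$ and the moments are uniformly bounded, $E_\alpha f_j\to E_\alpha f$ locally uniformly. Moreover, by \eqref{es-lower-bound-Ef-Rn} with $\|f_j\|_1=1$, $(E_\alpha f_j)^{t'}\le C(1+|x|^{\alpha-n})^{t'}$, which is integrable because $t>n/\alpha$. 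Dominated convergence then gives equality in the limit.

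\textbf{Step 3: the $L^q$ norm; main obstacle.} The crux is the exponent $1-\gamma$. If $1-\gamma\ge 0$ and $\gamma\ge0$ — note that $q\ge \frac{nt'}{n+\alpha t'}$ is exactly the condition $\gamma\ge0$ — then $\|f\|_1^{\gamma}\|f\|_q^{1-\gamma}\ge$ the corresponding limit fails in the needed direction only through the $L^q$ factor. Indeed $\|f\|_q\le\liminf\|f_j\|_q=1$ would make $F[f]$ larger, not smaller. The danger is loss of $L^q$ mass by concentration at the origin; Case~1 of Proposition~\ref{Prop-SNC-Rn} shows that concentration can lower $F$ when $\gamma$ is too large.

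To handle this we will use a Brezis--Lieb type splitting. Write $f_j=f+r_j$ with $r_j\to0$ a.e. Since $q<1$, we have $\int f_j^q=\int f^q+\int r_j^q+o(1)$ by the concavity version of Brezis--Lieb, and $\|r_j\|_1\to0$ by Step 2. The concentrated part $r_j$ contributes $o(1)$ to $E_\alpha f_j$, since it carries vanishing $L^1$ mass at bounded distance. Hence, with $a=\lim\int r_j^q$,
\[
\mathscr C=\lim F[f_j]\ \ge\ \frac{\|E_\alpha f\|_{t'}}{(\int f^q+a)^{(1-\gamma)/q}}\quad\text{while}\quad \int f^q+a=1 .
\]
This is the wrong direction unless $a=0$. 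We therefore plan to exclude $a>0$ by comparison with the lower bound: a concentrating bubble adds to $\|f\|_q$ but, by Step 2, not to $\|f\|_1$ or to $E_\alpha f$. Using the strict subadditivity $(A+B)^{\theta}<A^\theta+B^\theta$ with $\theta=(1-\gamma)/q$, together with the hypothesis $\gamma\ge0$ (equivalently, $\theta\le 1/q\cdot$ the HLS scaling), we aim to show that redistributing the bubble's $L^q$ mass into a rescaled profile strictly decreases $F$. This would contradict minimality, so $a=0$ and $f$ is a minimizer. Finally, $f>0$ a.e. follows from the Euler--Lagrange equation, $f^{q-1}\propto (E_\alpha f)^{t'-1}$ times positive terms, combined with the positivity of $E_\alpha f$. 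Making this bubble-exclusion step rigorous, and seeing exactly why the threshold $\gamma\ge0$ is what is needed, is where we expect the real difficulty.
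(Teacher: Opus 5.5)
\textbf{There is a genuine gap, and it starts with a sign error in Step~3.} Since $\alpha+\frac{n}{t'}=\alpha+n-\frac{n}{t}>n$, the numerator $n-q(\alpha+\frac{n}{t'})$ of $\gamma$ decreases in $q$ and vanishes at $q=\frac{nt'}{n+\alpha t'}$. So the hypothesis $q\ge\frac{nt'}{n+\alpha t'}$ is exactly $\gamma\le 0$, not $\gamma\ge 0$. The endpoint $\gamma=0$ is the reverse HLS inequality itself, whose extremals are known.

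For $\gamma<0$ you have $F[f]=\|E_\alpha f\|_{L^{t'}}\,\|f\|_{L^1}^{-\gamma}\,\|f\|_{L^q}^{\gamma-1}$ with $-\gamma>0$. Losing $L^1$ mass in the limit therefore only lowers $F$, and you never need $\|f\|_{L^1}=1$. This matters, because your argument for it fails. For $q<1$, H\"older on a ball gives $\int_B f^q\le |B|^{1-q}(\int_B f)^q$: $L^1$ controls $L^q$, not conversely. Nothing in your Steps~1--2 stops $L^1$ mass from concentrating at the origin. For example, adding $\delta\varepsilon_j^{-n}\mathbf 1_{B_{\varepsilon_j}}$ adds mass $\delta$, while its $L^q$ quasi-norm tends to $0$.

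For the same reason, the claimed convergence $E_\alpha f_j\to E_\alpha f$ is unjustified: a concentrating mass $\delta$ leaves $\delta|x|^{\alpha-n}$ in the limit. All you need is Fatou's inequality $E_\alpha f\le\liminf_j E_\alpha f_j$. Combined with your dominating function $C(1+|x|^{\alpha-n})^{t'}$, this gives $\|E_\alpha f\|_{L^{t'}}\le\lim_j\|E_\alpha f_j\|_{L^{t'}}$, which is the paper's Step~2.

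\textbf{The only real issue is then $\|f\|_{L^q}=1$, and you have put the danger in the wrong place.} $L^q$ mass cannot concentrate at the origin. Indeed, $\|f_j\|_{L^1}=1$ and monotonicity give $f_j\le C|x|^{-n}$, and $|x|^{-nq}$ is integrable near $0$ because $q<1$. The danger is $L^q$ mass escaping to infinity. There your only bound $f_j\le C|x|^{-n/q}$ gives just $f_j^q\le C|x|^{-n}$, which is not integrable at infinity. Your Brezis--Lieb/bubble-redistribution plan does not address this. It is also not carried out: you never show that redistributing a bubble lowers $F$.

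The missing step, which is how the paper concludes, is a uniform bound in $L^s$ for some $s\in(\frac{n}{\alpha},q)$. Apply Proposition~\ref{Prop-SNC-Rn}(ii) at exponent $s$; equivalently, use \eqref{CLI} at exponent $s$ with your uniform moment bound from Step~1 and $\|f_j\|_{L^1}=1$. Since $1-\gamma(s)>0$, this bounds $\|f_j\|_{L^s}$, hence $f_j\le C|x|^{-n/s}$. Then $f_j\le C\min\{|x|^{-n},|x|^{-n/s}\}\in L^q(\R^n)$, and dominated convergence gives $\|f\|_{L^q}=1$. Finally, $\mathscr C\ge\|E_\alpha f\|_{L^{t'}}\ge F[f]\ge\mathscr C$.

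Positivity of $f$ is not part of this statement. In any case, the Euler--Lagrange equation holds only on the support of $f$, so it cannot give $f>0$ by itself. The paper instead uses the perturbation $f+\varepsilon\mathbf 1_E$ in Proposition~\ref{prop-positivity-Rn}.
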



\begin{proof}
If $q=\frac{nt^\prime}{n+\alpha t^\prime}$, then $\gamma=0$. This is the case due to \cite{2015-Beckner, 2015-DouZhu-IM-R}.

In the following we always assume that $\frac{nt^\prime}{n+\alpha t^\prime}<q<1$. Then $\gamma<0$.

We consider the minimization problem \eqref{mini-problem-Rn} and let $\{f_j\}$ be a minimizing sequence, that is,
\[
\lim_{j\to\infty}F[f_j]=\mathscr{C}(n,\alpha,q,t).
\]
For convenience, we divide the proof into three steps.

\medskip
\textbf{\textit{Step 1.}} Construction of a candidate minimizer.

By the rearrangement inequality \eqref{ineq-rearrangement-Rn}, we have
$$
F[f_j^*]=\frac{\|E_\alpha f_j^*\|_{L^{t^\prime}(\R^n)}}{\|f_j^*\|^\gamma_{L^1(\R^n)}\|f_j^*\|^{1-\gamma}_{L^q(\R^n)}}
\le F[f_j].
$$
So that $\{f_j^*\}$ is still a minimizing sequence. 
Hence, without loss of generality, we may assume that each $f_j$ is nonnegative, radially symmetric, and non-increasing. 
For simplicity of notation, we write $f_j(x)=f_j(|x|)$.

Moreover, by the scaling invariance and homogeneity of $F$, we may normalize the sequence to satisfy
\[
\|f_j\|_{L^1(\R^n)}=\|f_j\|_{L^q(\R^n)}=1,
\ \  \forall j\in\mathbb{N}.
\]
From the normalization $\|f_j\|_{L^q(\R^n)}=1$, it follows that for any $R>0$,
\begin{equation*}
1=\sigma_n\int_0^\infty f_j(r)^qr^{n-1}\,dr
\ge\sigma_n\int_0^R f_j(r)^qr^{n-1}\,dr 
\ge \frac{\sigma_n}{n}f_j(R)^qR^n.
\end{equation*} 
Consequently, we obtain the pointwise bound
\begin{equation}\label{e-ms-1} 
f_j(r)\le C\min\left\{r^{-\frac{n}{q}},\, r^{-n}\right\},
\ \  \forall r>0,
\end{equation}
where $C>0$ is independent of $j$.

By Helly's selection theorem, up to a subsequence, we have $f_j\to f_0$ almost everywhere for some measurable function $f_0$. 
Moreover, $f_0$ is nonnegative, radially symmetric, and non-increasing, and satisfies the same pointwise bound \eqref{e-ms-1}.

\medskip
\textbf{\textit{Step 2.}} Proof of the inequality
\begin{equation}\label{es-pf-prop-attained-Rn-Ef}
\|E_\alpha f_0\|_{L^{t^\prime}(\R^n)}
\le
\lim_{j\to\infty}\|E_\alpha f_j\|_{L^{t^\prime}(\R^n)}.
\end{equation}

Since $\|E_\alpha f_j\|_{L^{t^\prime}(\R^n)}\to \mathscr C(n,\alpha,q,t)>0$, we may assume that there exists a constant $C>0$, independent of $j$, such that
\[
\|E_\alpha f_j\|_{L^{t^\prime}(\R^n)}^{\,t^\prime} \le C,
\ \  \forall j \in\mathbb{N}.
\]
By Lemma~\ref{lemma-rearrangement}, each $E_\alpha f_j$ is radially symmetric and strictly increasing.
Consequently, 
\[
(E_\alpha f_j(x))^{t^\prime}\le C|x|^{-n},
\ \  \forall x\in \R^n\setminus\{0\}.
\]
Thus, up to a subsequence, Helly's selection theorem implies that 
\[
(E_\alpha f_j(|x|))^{t^\prime}
\to
g(|x|)
\quad \text{a.e. in } \R^n
\]
for some measurable function $g$.
Since $E_\alpha f_j(x)\to g(x)^{1/t^\prime}$ almost everywhere on $\R^n$,
then by Fatou's lemma,
\[
E_\alpha f_0(x)
=\int_{\R^n}|x-y|^{\alpha-n}\lim_{j\to\infty} f_j(y)\,dy
\le \liminf_{j\to\infty} E_\alpha f_j(x)
= \lim_{j\to\infty} E_\alpha f_j(x).
\]
Thus
\[
(E_\alpha f_0(x))^{t^\prime}
\ge
\lim_{j\to\infty}(E_\alpha f_j(x))^{t^\prime}
=
g(x).
\]
On the other hand, by \eqref{es-lower-bound-Ef-Rn} and the Carlson--Levin inequality, we have
\[
E_\alpha f_j(x)
\ge
\mathscr C_0 \Bigl(|x|^{\alpha-n}
+
\mathscr L(n,\lambda,q)\Bigr)
=:M(x),
\ \  \forall j \in\mathbb{N}.
\]
Since $t^\prime<\frac{n}{n-\alpha}<0$, it follows that
\[
(E_\alpha f_j(x))^{t^\prime}\le (M(x))^{t^\prime}\in L^1(\R^n).
\]
Therefore, by the dominated convergence theorem,
\[
\lim_{j\to\infty}\int_{\R^n}(E_\alpha f_j(x))^{t^\prime}\,dx
=
\int_{\R^n} g(x)\,dx
\le
\int_{\R^n}(E_\alpha f_0(x))^{t^\prime}\,dx.
\]
Since $t^\prime<0$, this yields
$$
\|E_\alpha f_0\|_{L^{t^\prime}(\mathbb{R}^n)}\le\lim_{j\to\infty} \|E_\alpha f_j\|_{L^{t^\prime}(\mathbb{R}^n)}.
$$

\medskip
\textbf{\textit{Step 3.}} Attainment of the infimum.

By Fatou's lemma, 
\[
\|f_0\|_{L^1(\R^n)} \le \liminf_{j\to\infty}\|f_j\|_{L^1(\R^n)} =1.
\]
Since $\gamma<0$, it remains to show that $\|f_0\|_{L^q(\R^n)}=1$. 
By \eqref{es-rearrange}, for any fixed $j$, one has  
\[
\int_{\mathbb{R}^n} |y|^{\alpha-n} f_j(y)\,dy<\infty
\]
since $\{f_j\}$ is a minimizing sequence. Then Carlson--Levin inequality \eqref{CLI} implies that $f_j\in L^s(\mathbb{R}^n)$
for any $s\in\left( \frac{n}{\alpha},q \right)$. Thus by using the inequality \eqref{ineq-main-Rn},
$$
\|E_\alpha f_j\|_{{L^{t^\prime}(\mathbb{R}^n)}} 
\ge 
\mathscr{C}^\prime \|f_j\|_{L^s(\mathbb{R}^n)}^{1-\gamma(s)} ,
$$
where 
$$
1-\gamma(s)=1-\frac{n-s\alpha-\frac{n}{t^\prime}s}{n(1-s)}>0. 
$$
This implies that $\{ f_j \}$ is uniformly bounded in $L^s(\R^n)$. 
In particular, 
\[
f_j(|x|)\le C|x|^{-\frac{n}{s}},
\ \  \forall j \in\mathbb{N},
\]
for some constant $C>0$ independent of $j$.
Combining this with the estimate $(\ref{e-ms-1})$, we deduce that
$$ 
f_j(|x|)\le C \min\left\{|x|^{-\frac{n}{s}},|x|^{-n}\right\}\in L^q(\R^n).
$$
Therefore, by the dominated convergence theorem, 
\[
\|f_0\|_{L^q(\R^n)}=\lim_{j\to\infty}\|f_j\|_{L^q(\R^n)}=1.
\]

Finally, by the definition of $\mathscr{C}(n,\alpha,q,t)$ and Step 2, 
$$
\mathscr{C}(n,\alpha,q,t) 
= 
\lim\limits_{j\to\infty}\|E_\alpha f_j\|_{L^{t^\prime}(\R^n)}
\ge 
\|E_\alpha f_0\|_{L^{t^\prime}(\R^n)}
\ge
\frac{\|f_0\|^{-\gamma}_{L^{1}(\R^n)}\|E_\alpha f_0\|_{L^{t^\prime}(\R^n)}}{{\|f_0\|^{1-\gamma}_{L^{q}(\R^n)}}} 
\ge 
\mathscr{C}(n,\alpha,q,t).
$$
Therefore,
\[
F[f_0]=\mathscr{C}(n,\alpha,q,t),
\]
and the infimum is achieved. 
\end{proof}

\begin{rem}\label{rem-mini-sequence}
Let $\{f_j\}$ be a minimizing sequence for the minimization problem \eqref{mini-problem-Rn}. It is obvious that, without loss of generality, we can assume that each $f_j$ is bounded and has compact support. 
\end{rem}

\begin{rem}\label{rem-semi-continuous}
Similarly to \cite{2019-Carrillo-Delgadino-Dolbeault-Frank-Hoffmann-JMPA}, it is easy to see that the map $q\mapsto \mathscr{C}(n,\alpha,q,t)$ is upper semi-continuous. In particular, 
$$ \lim_{q\to\left(n/\alpha\right)^+} \mathscr{C}(n,\alpha,q,t) = 0 .$$
\end{rem}

\begin{prop}\label{prop-positivity-Rn}
Let $1\le n<\alpha$ and $\frac{n}{\alpha}<q,t<1$. 
If $f\ge0$ is an extremal function for $\mathscr C(n,\alpha,q,t)$, then $f$ is radial (up to a translation), monotone non-increasing and positive almost everywhere in $\mathbb R^n$.
\end{prop}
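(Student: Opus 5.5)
The plan has three parts: radial symmetry and monotonicity from the equality case of the rearrangement inequality, and positivity by a perturbation argument that contradicts minimality.

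\emph{Symmetry and monotonicity.} Let $f\ge0$, $f\not\equiv0$, be an extremal function, so $F[f]=\mathscr C(n,\alpha,q,t)$. By Lemma~\ref{lemma-rearrangement}(i), $\|f^*\|_{L^1}=\|f\|_{L^1}$ and $\|f^*\|_{L^q}=\|f\|_{L^q}$. By \eqref{ineq-rearrangement-Rn}, $\|E_\alpha f^*\|_{L^{t'}}\le \|E_\alpha f\|_{L^{t'}}$. Hence $F[f^*]\le F[f]=\mathscr C$. Since $\mathscr C$ is the infimum, equality must hold, and the equality case in Lemma~\ref{lemma-rearrangement}(iii) gives $f(x)=f^*(x+x_0)$. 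So $f$ is radial about $-x_0$ and non-increasing. After translating we assume $f=f^*$.

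\emph{Positivity.} Suppose $f$ is not positive a.e. Since $f$ is radial non-increasing, $f=0$ for $|x|>R$ for some $R<\infty$. Choose a nonnegative bump $h$ supported in the annulus $\{R<|x|<R+1\}$, say $h=\chi_{\{R<|x|<R+1\}}$, and set $f_\varepsilon=f+\varepsilon h$. Because the supports are disjoint, the norms expand exactly:
\[
\|f_\varepsilon\|_1=\|f\|_1+\varepsilon\|h\|_1,\qquad \|f_\varepsilon\|_q^q=\|f\|_q^q+\varepsilon^q\|h\|_q^q .
\]
Since $0<q<1$, the $L^q$ norm grows at order $\varepsilon^q$, which is much larger than $\varepsilon$. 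For the numerator, $E_\alpha f_\varepsilon=E_\alpha f+\varepsilon E_\alpha h$. Since $t'<0$, $\|E_\alpha f_\varepsilon\|_{t'}^{t'}=\int (E_\alpha f+\varepsilon E_\alpha h)^{t'}$, which is decreasing in $\varepsilon$. Hence $\|E_\alpha f_\varepsilon\|_{t'}$ increases, but only at order $\varepsilon$.

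\emph{Main obstacle: the order-$\varepsilon$ control of the numerator.} I need an $O(\varepsilon)$ bound on the increase of $\|E_\alpha f_\varepsilon\|_{t'}$, that is, integrability of $(E_\alpha f)^{t'-1}E_\alpha h$. This comes from \eqref{es-lower-bound-Ef-Rn}: $E_\alpha f\ge \mathscr C_0(\|f\|_1|x|^{\alpha-n}+c)$. Also $E_\alpha h\le C(1+|x|^{\alpha-n})$. Therefore
\[
(E_\alpha f)^{t'-1}E_\alpha h\lesssim (1+|x|^{\alpha-n})^{t'}\in L^1,
\]
because $t'(\alpha-n)<-n$. By the mean value theorem, $\|E_\alpha f_\varepsilon\|_{t'}=\|E_\alpha f\|_{t'}(1+O(\varepsilon))$. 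The case $\gamma\ne0$ needs no special treatment, since $\|f\|_1^\gamma$ only changes by a factor $1+O(\varepsilon)$ for any sign of $\gamma$. Moreover $1-\gamma>0$, because $1-\gamma=\frac{q(\alpha-n)}{n(1-q)}(1+\frac{n}{t'(\alpha-n)})$ and $t'(\alpha-n)<-n$.

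\emph{Contradiction.} Combining,
\[
F[f_\varepsilon]\le F[f]\,\bigl(1+O(\varepsilon)\bigr)\bigl(1+c\,\varepsilon^q\bigr)^{-(1-\gamma)/q}<F[f]
\]
for small $\varepsilon$, with $c=\|h\|_q^q/\|f\|_q^q>0$. This contradicts minimality, so $f>0$ a.e.
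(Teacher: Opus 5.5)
Your proposal is correct and follows essentially the same route as the paper. Symmetry comes from the equality case of \eqref{ineq-rearrangement-Rn}, and positivity from the perturbation $f+\varepsilon\mathbf 1_E$ on a bounded piece of the zero set: the $\varepsilon^q$ gain in $\|f_\varepsilon\|_{L^q}$ beats the $O(\varepsilon)$ change in the numerator, which is controlled by the mean value theorem and the lower bound \eqref{es-lower-bound-Ef-Rn}.

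The differences are cosmetic. You take $E$ to be an annulus just outside the support ball, which monotonicity guarantees lies in the zero set. You dominate directly by $(1+|x|^{\alpha-n})^{t'}$ rather than by $C(E_\alpha f)^{t'}$. You also make explicit the fact $1-\gamma>0$, which the paper uses tacitly.
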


\begin{proof}
Since $\mathscr C(n,\alpha,q,t)>0$, we have $f\not\equiv0$. 
By rearrangement inequalities \eqref{ineq-rearrangement-Rn} and up to a translation, we know that $f$ is radial and monotone non-increasing. 
Assume by contradiction that $f=0$ on a measurable set $E\subset\mathbb R^n$ with $0<|E|<\infty$. 
Replacing $E$ by a bounded subset of sufficiently small positive measure if necessary, we may assume that $E$ is bounded and there exist positive constants $C_1,C_2$ such that
\[
E_\alpha \mathbf 1_E(x)
\le C_1 (1+|x|)^{\alpha-n}
\le C_2 E_\alpha f(x)
\ \  \text{for all }x\in\mathbb R^n.
\]

For $\varepsilon>0$ small, let $f_\varepsilon:=f+\varepsilon \mathbf 1_E$. Since $f=0$ on $E$, we have
\[
\|f_\varepsilon\|_{L^1(\mathbb R^n)}
=\|f\|_{L^1(\mathbb R^n)}+\varepsilon |E|,
\ \ 
\|f_\varepsilon\|_{L^q(\mathbb R^n)}^q
=\|f\|_{L^q(\mathbb R^n)}^q+|E|\varepsilon^q,
\]
and hence
\begin{equation}\label{es-pf-prop-f>0-1}
\|f_\varepsilon\|_{L^1(\mathbb R^n)}^\gamma \|f_\varepsilon\|_{L^q(\mathbb R^n)}^{1-\gamma}
=
\|f\|_{L^1(\mathbb R^n)}^\gamma \|f\|_{L^q(\mathbb R^n)}^{1-\gamma}
\bigl(1+O(\varepsilon)\bigr)
\biggl(
1+\frac{1-\gamma}{q}\frac{|E|}{\|f\|_{L^q(\mathbb R^n)}^q}\varepsilon^q+o(\varepsilon^q)
\biggr)
\end{equation}
as $\varepsilon\to0^+$.

Since $E_\alpha f_\varepsilon=E_\alpha f+\varepsilon E_\alpha \mathbf 1_E$, the mean value theorem yields
\[
\frac{(E_\alpha f_\varepsilon)^{t^\prime}-(E_\alpha f)^{t^\prime}}{\varepsilon}
=
t^\prime\bigl(E_\alpha f+\theta_\varepsilon(x) E_\alpha \mathbf 1_E\bigr)^{t^\prime-1}E_\alpha \mathbf 1_E
\]
for some $\theta_\varepsilon(x)\in(0,\varepsilon)$. Since $t^\prime<0$ and $E_\alpha \mathbf 1_E\ge0$, we have
\[
\bigl(E_\alpha f+\theta_\varepsilon(x) E_\alpha \mathbf 1_E\bigr)^{t^\prime-1}\le (E_\alpha f)^{t^\prime-1},
\]
and therefore
\[
\left|
\frac{(E_\alpha f_\varepsilon)^{t^\prime}-(E_\alpha f)^{t^\prime}}{\varepsilon}
\right|
\le |t^\prime|(E_\alpha f)^{t^\prime-1}E_\alpha \mathbf 1_E
\le C_2|t^\prime|(E_\alpha f)^{t^\prime}.
\]
Then
\[
\int_{\mathbb R^n}(E_\alpha f_\varepsilon)^{t^\prime}\,dx
=
(1+O(\varepsilon)\bigr)
\int_{\mathbb R^n}(E_\alpha f)^{t^\prime}\,dx .
\]
It follows that
\begin{equation}\label{es-pf-prop-f>0-2}
\|E_\alpha f_\varepsilon\|_{L^{t^\prime}(\mathbb R^n)}
=
\|E_\alpha f\|_{L^{t^\prime}(\mathbb R^n)}\bigl(1+O(\varepsilon)\bigr).
\end{equation}

Combining estimates \eqref{es-pf-prop-f>0-1} and \eqref{es-pf-prop-f>0-2}, we obtain
\[
F[f_\varepsilon]
=
F[f]\,
\frac{1+O(\varepsilon)}
{1+\frac{1-\gamma}{q}\frac{|E|}{\|f\|_{L^q(\mathbb R^n)}^q}\varepsilon^q+o(\varepsilon^q)}.
\]
Since $0<q<1$, one has $O(\varepsilon)=o(\varepsilon^q)$ as $\varepsilon\to0^+$. Therefore,
\[
F[f_\varepsilon]
=
F[f]
\left(
1-\frac{1-\gamma}{q}\frac{|E|}{\|f\|_{L^q(\mathbb R^n)}^q}\varepsilon^q+o(\varepsilon^q)
\right),
\]
which implies $F[f_\varepsilon]<F[f]$ for   $\varepsilon>0$ sufficiently small. 
This is a contradiction. Hence $f>0$ almost everywhere in $\mathbb R^n$.
\end{proof}



\subsection{Inequalities for \texorpdfstring{$q\ge1$}{q>1} on \texorpdfstring{$\mathbb{R}^n$}{Rn}} \label{sec-q>1}
We first consider the case $q>1$. As in the case $0<q<1$, the exponent $\gamma$ is given by 
\[
\gamma=\gamma(q):=\frac{q\alpha+\frac{n}{t^\prime}q-n}{n(q-1)},
\]
which takes values in $(1+\frac{\alpha}{n}-\frac{1}{t},+\infty)$.

\begin{proof}[\textbf{Proof of Theorem \ref{them-Rn-q-bigger-1}}]
Without loss of generality, we assume that $\|E_\alpha f\|_{L^{t^\prime}(\R^n)}<\infty$ for nonnegative function $0\not\equiv f\in L^1(\R^n) \cap L^q(\R^n)$.   
By \eqref{es-rearrange}, one has 
\[
\int_{\mathbb{R}^n} |y|^{\alpha-n} f(y)\,dy<\infty.
\]
Then Carlson--Levin inequality \eqref{CLI} implies that $f\in L^s(\mathbb{R}^n)$ for any $s\in\left( \frac{n}{\alpha},q \right)$. 
By Theorem \ref{them-Rn},
$$
\|E_\alpha f\|_{L^{t^\prime}(\R^n)}
\geq
\mathscr{C}(n,\alpha,s,t)\|f\|^{\gamma(s)}_{L^1( \R^n)}\|f\|^{1-\gamma(s)}_{L^s(\R^n)},
$$
where $$\gamma(s):=\frac{s\alpha+\frac{n}{t^\prime}s-n}{n(s-1)}.$$ 
Since $s<1<q$, H\"older's inequality yields
$$
\|f\|^{s}_{L^s(\R^n)}\|f\|^{q\frac{1-s}{q-1}}_{L^q(\R^n)}\ge \|f\|^{\frac{q-s}{q-1}}_{L^1(\R^n)}.
$$
Observe that
$$
\gamma(q)-1=(1-\gamma(s))\frac{q(1-s)}{s(q-1)}.
$$
 Hence, 
\begin{equation}\label{ineq-temp-index-s}  
\|E_\alpha f\|_{L^{t^\prime}(\R^n)} \|f\|^{\gamma(q)-1}_{L^q(\R^n)} \geq \mathscr{C}(n,\alpha,s,t)\|f\|^{\gamma(q)}_{L^1( \R^n)}.
\end{equation}
This proves \eqref{ineq-main-Rn-q-bigger-1} for some constant
\begin{equation} \label{ineq-const-lower-bound-Rn}
    \mathscr{C}(n,\alpha,q,t)\ge\mathscr{C}(n,\alpha,s,t).
\end{equation}

The existence of a radial non-increasing minimizer follows as in the proof of Proposition~\ref{prop-Minimizer}.

It remains to show that every minimizer has compact support.
The Euler--Lagrange equation for the minimizing problem \eqref{mini-problem-Rn}  for $q>1$  is
\begin{equation} \label{eq-EL-Rn-q>1}
   \frac{E_\alpha\bigl((E_\alpha f)^{t^\prime-1}\bigr)}
{\|E_\alpha f\|_{L^{t^\prime}(\R^n)}^{t^\prime}}
-\gamma\,\frac{1}{\|f\|_{L^1(\R^n)}}
+(\gamma-1)\,\frac{f^{q-1}}{\|f\|_{L^q(\R^n)}^{q}}
=0 
\end{equation}
in the interior of the support of $f\ge0$.
Therefore,
\[
f(x) = \left(
C_1 - C_2 E_\alpha\bigl((E_\alpha f)^{t^\prime-1}\bigr)(x)
\right)_+^{\frac{1}{q-1}  }
\]
for some positive constants $C_1,C_2$.
Moreover, since $0<\|E_\alpha f\|_{L^{t^\prime}(\R^n)}<\infty$, 
\[
E_\alpha\bigl((E_\alpha f)^{t^\prime-1}\bigr)(x)
=
\int_{\R^n}|x-y|^{\alpha-n}(E_\alpha f(y))^{t^\prime-1}\,dy
\sim
|x|^{\alpha-n}
\int_{\R^n}(E_\alpha f(y))^{t^\prime-1}\,dy
\]
as $|x|\to\infty$.
Hence
\[
C_1-C_2E_\alpha\bigl((E_\alpha f)^{t^\prime-1}\bigr)(x)<0
\]
for sufficiently large $|x|$.
Consequently, $f(x)=0$ for sufficiently large $|x|$, and therefore $f$ has compact support.
\end{proof}

\begin{rem}
Using lower bound \eqref{es-lower-bound-Ef-Rn} together with the Carlson--Levin inequality \eqref{CLI+}, one can also derive the inequality \eqref{ineq-main-Rn-q-bigger-1}, and
\[
\mathscr{C}(n,\alpha,q,t)
\ge
\mathscr C_0 \left[
\frac{\sigma(n)}{\alpha - n}\, 
\bigl(\mathscr{L}(n,\alpha-n,q)\bigr)^{
t^\prime + \frac{n}{\alpha - n}}  \,
B\!\left(\frac{n}{\alpha - n},\, -t^\prime - \frac{n}{\alpha - n}\right)\, 
\right]^{\frac{1}{t^\prime}}
>0.
\]
Moreover, if $\alpha=n+2$ and $\frac{n}{\alpha}<t<1<q$, then the equality holds in \eqref{ineq-main-Rn-q-bigger-1} if and only if
\[
f(x)=(1-|x|^2)_+^{\frac{1}{q-1}},
\]
up to translations, dilations, and multiplication by a positive constant. 
The corresponding sharp constant is given by
\[
\scalebox{0.89}{$\displaystyle
\mathscr{C}(n,n+2,q,t)
=
\frac{1}{\pi}
\left(
\frac{n(q-1)}{2q+n(q-1)}
\left(\frac{2q}{2q+n(q-1)}\right)^{\frac{2}{n(q-1)}}
\left(\frac{\Gamma\!\left(\frac{q}{q-1}+\frac{n}{2}\right)}{\Gamma\!\left(\frac{q}{q-1}\right)}\right)^{\frac{2}{n}}
\right)^{1+\frac{n}{2t^\prime}}
\left(
\frac{\Gamma\!\left(-t^\prime-\frac{n}{2}\right)}{\Gamma(-t^\prime)}
\right)^{\frac{1}{t^\prime}}
.
$}
\]
\end{rem}

\begin{proof}[\textbf{Proof of Theorem \ref{them-Rn-q=1-log}}]


We adapt the argument of \cite{2003-DelPino-Dolbeault-JFA}, which is based on an approximation via inequalities with \(q>1\).

Define
\[
J_{1}[f]
:=
\|E_\alpha f\|_{L^{t^\prime}(\R^n)}
\exp\left\{
\left(\frac{\alpha}{n}-\frac{1}{t}\right)
\int_{\R^n} f\log f\,dx
\right\}.
\]
For \(q>1\), set
\[
J_{q}[f]
:=
\|E_\alpha f\|_{L^{t^\prime}(\R^n)}
\left(
\int_{\R^n} f^q\,dx
\right)^{
\frac{1}{q-1}
\left(\frac{\alpha}{n}-\frac{1}{t}\right)
}.
\]
Let
\[
\mathscr A
:=
\left\{
f\ge0:
\int_{\R^n} f(x)\,dx=1,\ 
f\log f\in L^1(\R^n)
\right\}.
\]
We then consider the minimization problem
\begin{equation}\label{mini-problem-log-Rn}
\mathscr C(n,\alpha,1,t)
:=
\inf_{f\in\mathscr A} J_1[f].
\end{equation}

It suffices by Lemma~\ref{lemma-rearrangement} to consider radially symmetric non-increasing functions. 
Without loss of generality, we may further assume that \(f\in \mathscr A\cap L^\infty(\R^n)\). 
Actually, set
\[
g_j(x):=\min\{f(x),j\}, \quad f_j(x):=\frac{g_j(x)}{\|g_j\|_{L^1(\R^n)}}.
\]
One can check that $f_j\in\mathscr{A}\cap L^\infty(\R^n)$ and  $\lim\limits_{j\to\infty}J_1[f_j]=J_1[f]$.

By \eqref{ineq-main-Rn-q-bigger-1}, we get
\begin{equation}
\label{vari-ineq-main-Rn-q-bigger-1}
J_q[f]\ge \mathscr C(n,\alpha,q,t).
\end{equation}
Let
\[
F(q):=\int_{\R^n} f^q\,dx.
\]
For \(q\) close to \(1\) we have
\[
|f^q\log f|
\le
\|f\|^{q-1}_{L^\infty(\R^n)}|f\log f|
\in L^1(\R^n).
\]
Therefore, 
\[
F'(q)=\int_{\R^n}f^q\log f\,dx.
\]
Since \(F(1)=1\), it follows that
\[
\lim_{q\to1^+}
\frac{1}{q-1}\log\int_{\R^n}f^q\,dx
=
\lim_{q\to1^+}
\frac{\log F(q)-\log F(1)}{q-1}
=
\int_{\R^n}f\log f\,dx.
\]
Thus, letting \(q\to1^+\) in \eqref{vari-ineq-main-Rn-q-bigger-1}, we obtain
\[
J_1[f]
=
\lim_{q\to1^+}J_q[f]
\ge
\lim_{q\to1^+}\mathscr C(n,\alpha,q,t).
\]
Combining with \eqref{ineq-const-lower-bound-Rn}, for $q>1$ close to $1$, there exists a constant \(C_1>0\) such that
\begin{equation} \label{estimate-lower-upper-C}
\frac{1}{C_1}\le \mathscr C(n,\alpha,q,t)\le C_1.
\end{equation}
Then by the definition of $\mathscr C(n,\alpha,1,t)$, we have
\begin{equation}\label{estimate-const-q=1}
\mathscr C(n,\alpha,1,t)\ge\lim_{q\to1^+}\mathscr C(n,\alpha,q,t)>0.    
\end{equation}

Let \(f_j\) be an extremal function for \eqref{ineq-main-Rn-q-bigger-1} with \(q=q_j>1\), and $\int_{\R^n} f_j(x)\,dx=1,$ namely,
\[
J_{q_j}[f_j]
=
\mathscr C(n,\alpha,q_j,t),
\]
where each \(f_j\) is radially symmetric and non-increasing.
Without loss of generality, we may normalize it by
\[
\int_{B_1} f_j(x)\,dx=\frac12.
\]
Let $q_j\to 1$ as $j\to+\infty$.
By Helly's selection theorem, there exists a nonnegative radially non-increasing function \(f_0\in L^1(\R^n)\) such that, up to a subsequence,
\[
f_j\to f_0 \quad \text{a.e. in } \R^n.
\]
To prove $J_1[f_0]=\mathscr C(n,\alpha,1,t)$, we need following claims.

\medskip
\textbf{Claim 1}. There exists a constant \(C_2>0\), independent of \(j\), such that
\begin{equation}\label{estimate-lower-upper-Ef}
 \frac{1}{C_2}
\le
\|E_\alpha f_j\|_{L^{t^\prime}(\R^n)}
\le
C_2.
\end{equation}

If $\|E_\alpha f_j\|_{L^{t^\prime}(\R^n)}\to\infty$, then 
\[
\left(
\int_{\R^n} f_j^{q_j}\,dx
\right)^{
\frac{1}{q_j-1}}
\to0.
\]
Moreover, from the Euler--Lagrange equation \eqref{eq-EL-Rn-q>1},
\[
f_j(x)
=
\left\{
\left(\int_{\R^n} f_j^{q_j}\,dx\right)
\left[
1+
\frac{q_j-1}{
q_j\left(\frac{\alpha}{n}-\frac1t\right)
}
-
\frac{
q_j-1
}{
q_j\left(\frac{\alpha}{n}-\frac1t\right)
}
\frac{
E_\alpha\!\left((E_\alpha f_j)^{t^\prime-1}\right)(x)
}{
\int_{\R^n}(E_\alpha f_j)^{t^\prime}\,dx
}
\right]
\right\}_+^{\frac{1}{q_j-1}} .
\]
Hence
\begin{equation}\label{estimate-from-EL-1}
\|f_j\|_{L^\infty(\R^n)}
\le
\left(\int_{\R^n} f_j^{q_j}\,dx\right)^{\frac{1}{q_j-1}}
\left[
1+
\frac{q_j-1}{
q_j\left(\frac{\alpha}{n}-\frac1t\right)}
\right]^{\frac{1}{q_j-1}} .
\end{equation}
Since
\[
\left[
1+
\frac{q_j-1}{
q_j\left(\frac{\alpha}{n}-\frac1t\right)}
\right]^{\frac{1}{q_j-1}}
\to
\exp\left\{
\frac{1}{\frac{\alpha}{n}-\frac1t}
\right\},
\]
as $q_j\to1$, we obtain
\[
\|f_j\|_{L^\infty(\R^n)}\to0.
\]
This contradicts the normalization
\[
\frac12
=
\int_{B_1} f_j(x)\,dx
\le
|B_1|\,\|f_j\|_{L^\infty(\R^n)}
\to0.
\]

Then $\|E_\alpha f_j\|_{L^{t^\prime}(\R^n)}\le C$ for some $C>0$. And therefore for some fixed \(s_0\in \left(\frac{n}{\alpha},1\right)\), it follows from \eqref{ineq-main-Rn} that
\[
C
\ge
\|E_\alpha f_j\|_{L^{t^\prime}(\R^n)}
\ge
\mathscr C(n,\alpha,s_0,t)
\|f_j\|_{L^1(\R^n)}^{\gamma(s_0)}
\|f_j\|_{L^{s_0}(\R^n)}^{1-\gamma(s_0)}.
\]
Since \(\gamma(s_0)<1\), we deduce that \(\{f_j\}\) is uniformly bounded in \(L^{s_0}(\R^n)\). 
On the other hand, because of
\[
f_j(x)
\le
f_j(1)
\le
\frac{1}{|B_1|}
\int_{B_1} f_j(y)\,dy
=
\frac{1}{2|B_1|},\ \ \forall |x|\ge 1.
\]
We have 
\[
\|f_j\|_{L^{s_0}(\R^n)}^{s_0}
\ge
\int_{\R^n\setminus B_1} f_j(x)^{s_0}\,dx
\ge
\left(\frac{1}{2|B_1|}\right)^{s_0-1}
\int_{\R^n\setminus B_1} f_j(x)\,dx
=
2^{-s_0}|B_1|^{\,1-s_0}
>0.
\]
Thus there exists a constant \(C_2>0\), independent of \(j\), such that \eqref{estimate-lower-upper-Ef} holds.

\medskip
\textbf{Claim 2.}  We claim that $f_0\in\mathscr{A}$, i.e. 
\[
\int_{\R^n} f_0(x)\,dx =1,\ \  f_0\log f_0\in L^1(\R^n).
\]

Recalling \eqref{estimate-lower-upper-C} and \eqref{estimate-lower-upper-Ef}, we obtain
\[
\frac{1}{C_1C_2}
\le
\left(
\int_{\R^n} f_j^{q_j}\,dx
\right)^{\frac{1}{q_j-1}
\left(\frac{\alpha}{n}-\frac{1}{t}\right)}
\le
C_1C_2.
\]
Combining this with \eqref{estimate-from-EL-1}, we conclude that
\[
\|f_j\|_{L^\infty(\R^n)}
\le C_3.
\]
Thus, we deduce that \(\{f_j\}\) is uniformly bounded in \(L^{q_0}(\R^n)\) for some $q_0>1$. 
Thus we have 
$$ 
f_j(|x|)\le C_4 \min\left\{|x|^{-\frac{n}{s}},|x|^{-\frac{n}{q_0}}\right\}\in L^1(\R^n).
$$
Consequently, the dominated convergence theorem implies that
\[
\|f_0\|_{L^1(\R^n)}
=
\lim_{j\to\infty}
\|f_j\|_{L^1(\R^n)}
=
1.
\]



Consider
\[
|f_0\log f_0|=(f_0\log f_0)_++(f_0\log f_0)_-.
\]
By the uniform \(L^\infty\)-bound on \(f_j\),
\[
0\le f_0\le C
\ \ \text{a.e. in }\R^n.
\]
Therefore, 
\[
(f_0\log f_0)_+
\le
(\log C)_+ f_0
\in L^1(\R^n).
\]
On the other hand, for \(s_0<s<1\), there exists a constant \(C_s>0\) such that
\[
0\le -r\log r\le C_s r^s,
\ \  0<r\le1.
\]
Hence
\[
(f_0\log f_0)_-
\le
C_s f_0^s
\in L^1(\R^n).
\]
Combining the estimates for the positive and negative parts of $f_0\log f_0$, we conclude that
\[
f_0\log f_0\in L^1(\R^n).
\]

\textbf{Claim 3.} We have the inequality $\lim\limits_{j\to\infty} J_{q_j} [f_j] \ge J_1 [f_0].$

For each \(j\), since \(\int_{\R^n}f_j\,dx=1\), we have
\[
\frac{1}{q_j-1}
\log\int_{\R^n} f_j^{q_j}\,dx
=
\frac{\log\int_{\R^n} f_j^{q_j}\,dx-\log\int_{\R^n} f_j\,dx}{q_j-1}.
\]
The mean value theorem gives some \(\theta_j\in(1,q_j)\) such that
\[
\frac{1}{q_j-1}
\log\int_{\R^n} f_j^{q_j}\,dx
=
\frac{
\int_{\R^n} f_j^{\theta_j}\log f_j\,dx
}{
\int_{\R^n} f_j^{\theta_j}\,dx
}.
\]
The dominated convergence theorem implies that as $j\to +\infty$,
\[
f_j^{\theta_j}\to f_0
\ \ \text{in }L^1(\R^n),
\]
and
\[
f_j^{\theta_j}\log f_j
\to
f_0\log f_0
\ \ \text{in }L^1(\R^n).
\]
Hence
\[
\lim_{j\to\infty}
\frac{1}{q_j-1}
\log\int_{\R^n} f_j^{q_j}\,dx
=
\int_{\R^n} f_0\log f_0\,dx.
\]
Combining this with \eqref{es-pf-prop-attained-Rn-Ef}, we conclude that 
\[
\lim_{j\to\infty} J_{q_j}[f_j]
\ge
\|E_\alpha f_0\|_{L^{t^\prime}(\R^n)}
\exp\left\{
\left(\frac{\alpha}{n}-\frac{1}{t}\right)
\int_{\R^n} f_0\log f_0\,dx
\right\}
=
J_1[f_0].
\]

\medskip
By using the above claims, the definition of $\mathscr C(n,\alpha,1,t)$ and  \eqref{estimate-const-q=1}, we have 
\[
J_1 [f_0]=\mathscr C(n,\alpha,1,t).
\] 

\medskip
We now show some basic properties of extremal functions.

Let \(f\) be an extremal function for \eqref{ineq-main-Rn-log}. 
By the rearrangement inequality, we may assume, up to a translation, that \(f\) is radially symmetric and non-increasing. 
More precisely, if \(f^*\) denotes the symmetric decreasing rearrangement of \(f\), then
\[
\|E_\alpha f^*\|_{L^{t^\prime}(\R^n)}
\le
\|E_\alpha f\|_{L^{t^\prime}(\R^n)},
\ \ 
\int_{\R^n} f^*\log f^*\,dx
=
\int_{\R^n} f\log f\,dx.
\]
Hence \(f^*\) is also an extremal function. 
In particular, every extremal function is, up to translation, radially symmetric and non-increasing. 

\textbf{Claim 4.} The extremal function \(f>0\) almost everywhere on \(\R^n\).

Since \(\mathscr C(n,\alpha,1,t)>0\), we have \(f\not\equiv0\). 
Assume by contradiction that \(f=0\) on a measurable set \(E\subset\R^n\) with \(0<|E|<\infty\). 
Replace \(E\) by a bounded subset of positive measure if necessary.
For \(\varepsilon>0\) small, set
\[
f_\varepsilon
:=
\frac{f+\varepsilon\mathbf 1_E}{1+\varepsilon |E|}.
\]
We first compute the entropy term. Since \(f=0\) on \(E\),
\begin{align*}
\int_{\R^n} f_\varepsilon\log f_\varepsilon\,dx
&=
\frac{1}{1+\varepsilon |E|}
\int_{\R^n\setminus E}
f\log\left(
\frac{f}{1+\varepsilon |E|}
\right)\,dx
+
\frac{\varepsilon}{1+\varepsilon |E|}
\int_E
\log\left(
\frac{\varepsilon}{1+\varepsilon |E|}
\right)\,dx
\\
&=
\int_{\R^n} f\log f\,dx + |E|\varepsilon\log\varepsilon+O(\varepsilon).
\end{align*}
Next, similar as \eqref{es-pf-prop-f>0-2}, we have
\[
\log\|E_\alpha f_\varepsilon\|_{L^{t^\prime}(\mathbb R^n)}
=
\log\|E_\alpha f\|_{L^{t^\prime}(\mathbb R^n)}+O(\varepsilon).
\]
Combining the above two expansions, we obtain
\[
\log J_1[f_\varepsilon]-\log J_1[f]
=
\left(\frac{\alpha}{n}-\frac1t\right)
\varepsilon |E|\log\varepsilon + O(\varepsilon).
\]
Thus
\[
\log J_1[f_\varepsilon]-\log J_1[f]<0
\]
for all \(\varepsilon>0\) sufficiently small.  
Hence $J_1[f_\varepsilon]<J_1[f]$, which contradicts the minimality of $f$.

Finally, consider the Euler--Lagrange equation associated with the constrained minimization problem \eqref{mini-problem-log-Rn}.
There exists a Lagrange multiplier \(\lambda\in\R\) such that
\[
\left(\frac{\alpha}{n}-\frac1t\right)
\bigl(\log f(x)+1\bigr)
+
\frac{
E_\alpha\!\left((E_\alpha f)^{t^\prime-1}\right)(x)
}{
\int_{\R^n}(E_\alpha f)^{t^\prime}\,dx
}
=
\lambda.
\]
This equation follows from the fact that $f$ is positive almost everywhere according to Claim~4.
Equivalently, after absorbing constants into the normalization,
\[
f(x)
=
A\exp\left\{
-\frac{1}{\frac{\alpha}{n}-\frac1t}
\frac{
E_\alpha\!\left((E_\alpha f)^{t^\prime-1}\right)(x)
}{
\int_{\R^n}(E_\alpha f)^{t^\prime}\,dx
}
\right\}
\ \ 
\text{for a.e. }x\in\R^n,
\]
where \(A>0\) is chosen so that
\[
\int_{\R^n}f(x)\,dx=1.
\]
In particular, it is easy to see that $f$ is bounded.

\end{proof}

\begin{rem}

Using the lower bound \eqref{es-lower-bound-Ef-Rn} together with the Carlson--Levin inequality \eqref{LCLI}, one can also derive the rough inequality \eqref{ineq-main-Rn-log}, where
\[
\mathscr{C}(n,\alpha,1,t)
\ge
\mathscr C_0 \left[
\frac{\sigma(n)}{\alpha - n}\, 
\bigl(\mathscr{L}(n,\alpha-n,1)\bigr)^{
t^\prime + \frac{n}{\alpha - n}}  \,
B\!\left(\frac{n}{\alpha - n},\, -t^\prime - \frac{n}{\alpha - n}\right)\, 
\right]^{\frac{1}{t^\prime}}
>0.
\]
Moreover, if $\alpha=n+2$ and $\frac{n}{\alpha}<t<1$, then equality holds in \eqref{ineq-main-Rn-log} if and only if
\[
f(x)=\pi^{-\frac{n}{2}}e^{-|x|^2}
\]
up to translations and dilations. 
The corresponding sharp constant is given by
\[
\mathscr{C}(n,n+2,1,t)
=
\frac{1}{\pi}
\left(\frac{n}{2e}\right)^{1+\frac{n}{2t^\prime}}
\left(
\frac{\Gamma\!\left(-t^\prime-\frac{n}{2}\right)}{\Gamma(-t^\prime)}
\right)^{\frac{1}{t^\prime}}
.
\]
\end{rem}

\section{Proofs of theorems on \texorpdfstring{$\mathbb{R}^n_+$}{Rn+}} \label{Sec-half-space}

This section is devoted to the half-space case. We begin with the following rearrangement inequality on the upper half-space \cite[Lemma 2]{2017-NgoNgu-RH}. 

\begin{lem}[\cite{2017-NgoNgu-RH}]\label{lm-rearrangement-half-space}
Assume that $2\le n<\alpha$.
For any nonnegative function $f\in L^p(\partial\mathbb{R}^n_+)$, one has
\begin{equation}\label{ineq-rearrangement-half-space}
\|\widetilde E_\alpha f^*\|_{L^{t^\prime}(\mathbb{R}^n_+)}
\le
\|\widetilde E_\alpha f\|_{L^{t^\prime}(\mathbb{R}^n_+)},
\end{equation}
with equality if and only if  $f(x)=f^*(x+x_0)$ for some $x_0\in\partial\mathbb{R}^n_+$.
\end{lem}

\subsection{Rough reverse HLS inequality on \texorpdfstring{$\mathbb{R}^n_+$}{Rn+}}
Define
\[
\widetilde F[f]
    :=\frac{\|\widetilde E_\alpha f\|_{L^{t^\prime}(\R^n_+)}}{\|f\|^{\widetilde\gamma}_{L^1(\partial\R^n_+)}\|f\|^{1-\widetilde\gamma}_{L^q(\partial\R^n_+)}}.
\]
Consider the minimization problem on the upper half-space 
\begin{equation}\label{mini-problem-half-space} 
\widetilde{\mathscr{C}}(n,\alpha,q,t)
:=\inf\left\{ 
    \widetilde F[f]
:0\le f\in L^1(\partial\R^n_+) \cap L^q(\partial\R^n_+), f \not\equiv 0 \right\}.
\end{equation}
\medskip
As in the whole-space case, we first establish a necessary and sufficient condition for the inequality~\eqref{Ineq-Main-half-space}.

\begin{prop}\label{Prop-SNC-Rn_+}  
Let $2 \le n < \alpha$ and $\frac{n}{\alpha}<t<1$.
\begin{enumerate}
\item If $0 < q \le \frac{n-1}{\alpha-1}$, then $\widetilde{\mathscr{C}}(n,\alpha,q,t)=0$.
\item If $\frac{n-1}{\alpha-1}<q<1$, then $\widetilde{\mathscr{C}}(n,\alpha,q,t) > 0$.
\end{enumerate}
\end{prop}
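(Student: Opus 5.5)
I will follow the proof of Proposition~\ref{Prop-SNC-Rn} closely, replacing $\R^n$ by the boundary $\partial\R^n_+\cong\R^{n-1}$ for the function $f$ and keeping the extension $\widetilde E_\alpha f$ on $\R^n_+$. The key scaling facts: for $f_\lambda(y)=\lambda^{-a}f(y/\lambda)$ one has $\|f_\lambda\|_{L^1}=\lambda^{n-1-a}\|f\|_{L^1}$, $\|f_\lambda\|_{L^q}=\lambda^{(n-1)/q-a}\|f\|_{L^q}$ and $\|\widetilde E_\alpha f_\lambda\|_{L^{t'}(\R^n_+)}=\lambda^{\alpha-1-a+n/t'}\|\widetilde E_\alpha f\|_{L^{t'}}$. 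This is what fixes $\widetilde\gamma$, and the threshold $q=\frac{n-1}{\alpha-1}$ is where $\widetilde\gamma$ equals $\frac{n}{t'(n-\alpha)}$. Throughout, the model decay is $\widetilde E_\alpha f(x)\sim|x|^{\alpha-n}\|f\|_{L^1}$ on $\R^n_+$, and $\int_{\R^n_+}(1+|x|)^{(\alpha-n)t'}dx<\infty$ because $t>n/\alpha$.

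\textbf{Part (i).} For $0<q<\frac{n-1}{\alpha-1}$ I cannot use the double integral $I_\alpha$, since there is no symmetric structure. Instead I work directly with the operator form. I take $f_\varepsilon=f+\varepsilon^{-(\alpha-1)}g(\cdot/\varepsilon)$ on $\partial\R^n_+$ with $\int g=1$, so that $\|f_\varepsilon\|_1\approx\varepsilon^{n-\alpha}$ and $\|f_\varepsilon\|_q^q\gtrsim\varepsilon^{n-1-q(\alpha-1)}\to\infty$. For the numerator I bound $\widetilde E_\alpha f_\varepsilon$ from below only; since $t'<0$, a pointwise lower bound on $\widetilde E_\alpha f_\varepsilon$ gives an upper bound on the norm. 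I use $\widetilde E_\alpha f_\varepsilon\ge \widetilde E_\alpha\bigl(\varepsilon^{-(\alpha-1)}g(\cdot/\varepsilon)\bigr)$, which is $\varepsilon^{n-\alpha}$ times a rescaled $\widetilde E_\alpha g$, giving $\|\widetilde E_\alpha f_\varepsilon\|_{t'}\lesssim\varepsilon^{n-\alpha+n/t'}$. I then compare exponents and choose whichever piece (spike or background $f$) dominates. If the bare spike is not enough, I use a plain dilation family $f_\lambda$ with the exponent $a$ free and optimize $a$, exactly as in the Carrillo et al.\ computation. The condition reduces to $\widetilde\gamma>\frac{n}{t'(n-\alpha)}$, which holds by the monotonicity of $\widetilde\gamma$ in $q$.

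For the endpoint $q=\frac{n-1}{\alpha-1}$ I mimic Case~2 with
\[
f_k(y)=|y|^{-(\alpha-1)}(\log|y|)^{-\frac{\alpha-1}{n-1}(1+\frac1k)}\quad\text{for }|y|>e,
\]
constant inside. Then $\|f_k\|_q\to\infty$ while $\|f_k\|_1$ and $\int|y|^{\alpha-n}f_k$ stay bounded. The elementary inequality gives $\widetilde E_\alpha f_k(x)\le C(|x|^{\alpha-n}+1)$, which is in $L^{t'}(\R^n_+)$ with a uniform bound. Hence $\widetilde F[f_k]\to0$.

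\textbf{Part (ii).} By Lemma~\ref{lm-rearrangement-half-space} I may assume $f=f^*$ on $\R^{n-1}$. I want the analogue of \eqref{es-lower-bound-Ef-Rn}:
\[
\widetilde E_\alpha f(x',x_n)\ge \mathscr C_0\Bigl(\|f\|_{L^1}|x|^{\alpha-n}+\int_{\R^{n-1}}|y|^{\alpha-n}f(y)\,dy\Bigr).
\]
The first term comes from the three-region decomposition in $y\in\R^{n-1}$ with radii measured by $|x'|$. Since $|x-y|\ge|x'-y|$ and $|x-y|\ge x_n$, the decomposition is run at scale $|x|$, and monotonicity of $f$ on annuli in $\R^{n-1}$ handles the middle region. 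The second term is the main obstacle, because the half-space lacks the monotonicity in Lemma~\ref{lemma-rearrangement}(ii). I would instead use $|x-y|^2=|x'-y|^2+x_n^2\ge|x'-y|^2$ together with radial monotonicity of $y'\mapsto\int|y'-y|^{\alpha-n}f(y)dy$ on $\R^{n-1}$. This is the Brascamp--Lieb argument for $f=f^*$, since $\alpha-n>0$ makes the kernel increasing in distance, and it gives $\widetilde E_\alpha f(x)\ge \int|y|^{\alpha-n}f\,dy$. Next I apply the Carlson--Levin inequality \eqref{CLI} in dimension $n-1$ with $\lambda=\alpha-n$; this needs $q>\frac{n-1}{\alpha-1}$, which is exactly the hypothesis. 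This bounds the second term by $\|f\|_1(\|f\|_q/\|f\|_1)^{\frac{q(\alpha-n)}{(n-1)(1-q)}}$. Finally I integrate $(|x|^{\alpha-n}+A)^{t'}$ over $\R^n_+$ in polar coordinates (half the sphere) via \eqref{Beta Type Function}, giving $A^{1+n/(t'(\alpha-n))}$. The resulting exponent matches $1-\widetilde\gamma$, and I will verify this algebraically. The case $\widetilde\gamma=0$ follows by approximation, as before.
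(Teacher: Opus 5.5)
Your endpoint case $q=\frac{n-1}{\alpha-1}$ and your part (ii) follow the paper. Your reduction via $|x-y|\ge|x'-y|$ to the radial monotonicity of $x'\mapsto\int_{\partial\R^n_+}|x'-y|^{\alpha-n}f(y)\,dy$ is a correct way to get the term $\int|y|^{\alpha-n}f\,dy$, which the paper only asserts.

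However, Case~1 of part (i) has a genuine gap. The step ``a pointwise lower bound on $\widetilde E_\alpha f_\varepsilon$ gives an upper bound on the norm'' is backwards. If $u\ge v>0$ and $t'<0$, then $u^{t'}\le v^{t'}$, so $\int u^{t'}\le\int v^{t'}$. Raising to the negative power $1/t'$ reverses the inequality again, giving $\|u\|_{L^{t'}}\ge\|v\|_{L^{t'}}$. The $L^{t'}$ quasi-norm is increasing and $1$-homogeneous, exactly as you use it, correctly, in part (ii). So $\widetilde E_\alpha f_\varepsilon\ge\widetilde E_\alpha\bigl(\varepsilon^{-(\alpha-1)}g(\cdot/\varepsilon)\bigr)$ only bounds the numerator of $\widetilde F[f_\varepsilon]$ from below, which says nothing about $\widetilde F[f_\varepsilon]\to0$. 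What you need is an upper bound on $\|\widetilde E_\alpha f_\varepsilon\|_{L^{t'}(\R^n_+)}$, i.e.\ a lower bound on $\int(\widetilde E_\alpha f_\varepsilon)^{t'}$.

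The paper obtains this from reverse H\"older, $\|\widetilde E_\alpha h_\varepsilon\|_{L^{t'}(\R^n_+)}\le\widetilde I_\alpha[h_\varepsilon]/\|H_\varepsilon\|_{L^t(\R^n_+)}$, with the slab function $H_\varepsilon=h_\varepsilon\mathbf 1_{[0,\varepsilon]}(x_n)$. This non-symmetric pairing is precisely how it replaces the missing double integral, and it yields the order $\varepsilon^{n/t'}$. Within your framework, the fix is a pointwise \emph{upper} bound on a small half-ball. Since $\widetilde E_\alpha\bigl(\varepsilon^{-(\alpha-1)}g(\cdot/\varepsilon)\bigr)(x)=\widetilde E_\alpha g(x/\varepsilon)$, one has $\widetilde E_\alpha f_\varepsilon\le M$ on $\{x\in\R^n_+:|x|<\varepsilon\}$ for $\varepsilon\le1$. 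Hence $\int_{\R^n_+}(\widetilde E_\alpha f_\varepsilon)^{t'}\,dx\ge cM^{t'}\varepsilon^n$ and $\|\widetilde E_\alpha f_\varepsilon\|_{L^{t'}}\le C\varepsilon^{n/t'}$. Combined with $\|f_\varepsilon\|_{L^1}\ge\varepsilon^{n-\alpha}$, $\|f_\varepsilon\|_{L^q}\ge\|f\|_{L^q}$ and $0<\widetilde\gamma<1$, this gives $\widetilde F[f_\varepsilon]\le C\varepsilon^{\frac{n}{t'}-\widetilde\gamma(n-\alpha)}\to0$ exactly when $\widetilde\gamma>\frac{n}{t'(n-\alpha)}$.

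Several of your exponents are also wrong, and as written they would not produce the condition you state. With $a=\alpha-1$, your own scaling formula gives no factor $\varepsilon^{n-\alpha}$: the spike's extension has $L^{t'}$ quasi-norm of order $\varepsilon^{n/t'}$, not $\varepsilon^{n-\alpha+n/t'}$. With the latter, the comparison with $\|f_\varepsilon\|_{L^1}^{\widetilde\gamma}\approx\varepsilon^{\widetilde\gamma(n-\alpha)}$ would require $\widetilde\gamma>1+\frac{n}{t'(n-\alpha)}$, which is impossible because $\widetilde\gamma<1$. Also, for $q<\frac{n-1}{\alpha-1}$ the exponent $n-1-q(\alpha-1)$ is positive, so the spike's $L^q$ contribution $\varepsilon^{n-1-q(\alpha-1)}$ tends to $0$, not $\infty$. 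The actual mechanism is that the spike adds $L^1$ mass $\varepsilon^{n-\alpha}\to\infty$ at negligible $L^q$ cost, and the entire gain comes from $\|f_\varepsilon\|_{L^1}^{\widetilde\gamma}$. Finally, your fallback to a pure dilation family $\lambda^{-a}f(\cdot/\lambda)$ cannot work. $\widetilde\gamma$ is defined precisely so that $\widetilde F$ is invariant under these maps, so $\widetilde F[f_\lambda]=\widetilde F[f]$ for all $\lambda$ and $a$. You really need the superposition of a fixed profile and a concentrating spike.
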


\begin{proof}

To prove \textit{(i)}, we again divide the argument into two cases. 

\textbf{Case~1.} Assume that $0 < q < \frac{n-1}{\alpha-1} < \frac{n}{\alpha} < t < 1$.

Let $h, g \in C_c^{\infty}(\partial\mathbb{R}^n_+)$ be nonnegative functions and
\[
\int_{\partial\R^n_+} g(y)\,dy = 1.
\]
For $\varepsilon>0$ sufficiently small, define
\[
h_\varepsilon(x')
:=
h(x')
+
\varepsilon^{-\alpha+1}
g\!\left(\frac{x'}{\varepsilon}\right),
\ \ 
x'\in\partial\mathbb R^n_+,
\]
and
\[
H_\varepsilon(x',x_n)
:=
h_\varepsilon(x')
\mathbf 1_{[0,\varepsilon]}(x_n),
\ \ 
(x',x_n)\in\mathbb R^n_+.
\]
Define
\[
\widetilde I_\alpha[h_\varepsilon]
:=
\int_{\mathbb R^n_+}
H_\varepsilon(x)
\widetilde E_\alpha h_\varepsilon(x)
\,dx.
\]
By H\"older's inequality,
\[
\begin{aligned}
\widetilde{\mathscr C}(n,\alpha,q,t)
\le \widetilde F[h_\varepsilon]
&=
\frac{
\|\widetilde E_\alpha h_\varepsilon\|_{L^{t^\prime}(\mathbb R^n_+)}
}{
\|h_\varepsilon\|_{L^1(\partial\mathbb R^n_+)}^{\widetilde\gamma}
\|h_\varepsilon\|_{L^q(\partial\mathbb R^n_+)}^{1-\widetilde\gamma}
}
\\
&\le
\frac{
\widetilde I_\alpha[h_\varepsilon]
}{
\|h_\varepsilon\|_{L^1(\partial\mathbb R^n_+)}^{\widetilde\gamma}
\|h_\varepsilon\|_{L^q(\partial\mathbb R^n_+)}^{1-\widetilde\gamma}
\|H_\varepsilon\|_{L^t(\mathbb R^n_+)}
}
=: \widetilde G[h_\varepsilon].
\end{aligned}
\]
Therefore, it suffices to show that 
\[
\lim_{\varepsilon \to 0} \widetilde G[h_\varepsilon] = 0.
\]

A direct computation yields
\begin{equation}\label{estimate-NC-half-space-1}
\|h_\varepsilon\|_{L^1(\partial\mathbb R^n_+)}
=
\|h\|_{L^1(\partial\mathbb R^n_+)}
+
\varepsilon^{n-\alpha}.
\end{equation}
Moreover,
\begin{equation}\label{estimate-NC-half-space-2}
\|h_\varepsilon\|_{L^q(\partial\mathbb R^n_+)}^q
\ge
\widetilde C_1
\left(
\|h\|_{L^q(\partial\mathbb R^n_+)}^q
+
\varepsilon^{\,n-1-q(\alpha-1)}
\right).
\end{equation}
Similarly,
\begin{equation}\label{estimate-NC-half-space-3}
\|H_\varepsilon\|_{L^t(\R^n_+)}^t
=
\varepsilon
\|h_\varepsilon\|_{L^t(\partial\R^n_+)}^t
\ge
\widetilde C_2
\left(
\varepsilon\|h\|_{L^t(\partial\R^n_+)}^t
+
\varepsilon^{\,n-t(\alpha-1)}
\right).
\end{equation}
Furthermore,
\begin{align}\label{estimate-NC-half-space-4}
\widetilde I_\alpha[h_\varepsilon]
&=
\int_0^\varepsilon
\int_{\partial\mathbb R^n_+}
\int_{\partial\mathbb R^n_+}
\Bigl(
h(x')
+
\varepsilon^{-\alpha+1}
\sigma\!\left(\frac{x'}{\varepsilon}\right)
\Bigr)
\nonumber\\
&\ \ \ \ \cdot
\bigl(
|x'-y|^2+x_n^2
\bigr)^{\frac{\alpha-n}{2}}
\Bigl(
h(y)
+
\varepsilon^{-\alpha+1}
\sigma\!\left(\frac{y}{\varepsilon}\right)
\Bigr)
\,dy\,dx'\,dx_n
\nonumber\\
&=
\varepsilon
\left(
\int_{\partial\mathbb R^n_+}
\int_{\partial\mathbb R^n_+}
h(x')
|x'-y|^{\alpha-n}
h(y)
\,dy\,dx'
+
o_\varepsilon(1)
\right)
\nonumber\\
&\quad
+
2\varepsilon^{\,n-\alpha+1}
\left(
\int_{\partial\mathbb R^n_+}
h(x')
|x'|^{\alpha-n}
\,dx'
+
o_\varepsilon(1)
\right)
\nonumber\\
&\quad
+
\widetilde C_3
\varepsilon^{\,n-\alpha+1}.
\end{align}
Combining \eqref{estimate-NC-half-space-1}--\eqref{estimate-NC-half-space-4}, we obtain
\[
\scalebox{0.90}{$\displaystyle
\widetilde G[h_\varepsilon]
\le
\frac{
O(\varepsilon)
+
\varepsilon^{\,n-\alpha+1}
\bigl(
\widetilde C_4+O_\varepsilon(1)
\bigr)
}{
\left(
\|h\|_{L^1(\partial\mathbb R^n_+)}
+\varepsilon^{\,n-\alpha}
\right)^{\widetilde\gamma}
\left(
\widetilde C_1
\left(
\|h\|_{L^q(\partial\mathbb R^n_+)}^q
+
\varepsilon^{\,n-1-q(\alpha-1)}
\right)
\right)^{\frac{1-\widetilde\gamma}{q}}
\left(
\widetilde C_2
\left(
\varepsilon\|h\|_{L^t(\partial\mathbb R^n_+)}^t
+
\varepsilon^{\,n-t(\alpha-1)}
\right)
\right)^{1/t}
}.
$}
\]
Since $n-\alpha+1<1$, $n-1-q(\alpha-1)>0$, and $n-t(\alpha-1)<1$,
the preceding estimate reduces to 
\begin{align*}
\widetilde G[h_\varepsilon]
&\le
\frac{
\widetilde C_4
\varepsilon^{\,n-\alpha+1}
}{
\left(\widetilde C_1\|h\|_{L^q(\partial\R^n_+)}^q
\right)^{\frac{1-\widetilde\gamma}{q}}
\widetilde C_2^{\,1/t}
\,
\varepsilon^{\,\widetilde\gamma(n-\alpha)+\frac{n}{t}-(\alpha-1)}
}
+
o_\varepsilon(1).
\end{align*}
Therefore, it suffices to verify that
\[
n-\alpha +1 > \widetilde \gamma(n-\alpha)+\frac{n}{t}-(\alpha-1),
\]
or equivalently, 
\[
\widetilde \gamma > \frac{n}{t^\prime(n-\alpha)}.
\]
This is immediate since \(\widetilde\gamma(q)\) is strictly decreasing in \(q\in(0,1)\), and
\[
\widetilde\gamma\!\left(
\frac{n-1}{\alpha-1}
\right)
=
\frac{n}{t^\prime(n-\alpha)}.
\]
Hence we conclude that $\widetilde{\mathscr{C}}(n,\alpha,q,t)=0$ in the case $0<q<\frac{n-1}{\alpha-1}$.

\textbf{Case 2.}  Assume that $q=\frac{n-1}{\alpha-1}<\frac{n}{\alpha}<t<1$.
Define
\[
h_k(x)
:=
\begin{cases}
e^{-\alpha+1},
&
|x|\le e,
\\[0.3em]
|x|^{-\alpha+1}
(\log|x|)^{-\frac{\alpha-1}{n-1}\left(1+\frac1k\right)},
&
|x|>e.
\end{cases}
\]
Repeating the argument in Case~2 of the proof of Proposition~\ref{Prop-SNC-Rn}, one verifies that 
$$
\lim\limits_{k\to\infty}\widetilde F[h_k]
=\lim\limits_{k\to\infty}
\frac{\|\widetilde E_\alpha h_k\|_{L^{t^\prime}(\R^n_+)}}
{\|h_k\|_{L^1(\partial\R^n_+)}^{\widetilde \gamma}\,\|h_k\|_{L^q(\partial\R^n_+)}^{1-\widetilde\gamma}}
=0.
$$
Therefore, $\widetilde{\mathscr C}(n,\alpha,q,t)=0$ in this case.

\medskip
To prove \textit{(ii)}, it suffices by Lemma~\ref{lm-rearrangement-half-space} to consider radially symmetric non-increasing functions.

Arguing as in the proof of Proposition~\ref{Prop-SNC-Rn}, one obtains the lower bound
\begin{equation}\label{Estimate-LB-half-space}
\widetilde E_\alpha f(x)
\ge
\widetilde{\mathscr C}_0
\left(
\|f\|_{L^1(\partial\R^n_+)} |x|^{\alpha-n}
+
\int_{\partial\mathbb R^n_+}
|y|^{\alpha-n}f(y)\,dy
\right),
\end{equation}
for all $x\in\mathbb R^n_+$ and $0\le f=f^*\in L^1(\partial\mathbb R^n_+)$, where \(\widetilde{\mathscr C}_0>0\) is independent of \(f\) and \(x\).
Consequently,
\begin{align*}
\|\widetilde E_\alpha f\|_{L^{t^\prime}(\R^n_+)}
&\ge
\widetilde{\mathscr C}_0
\left(
\int_{\mathbb R^n_+}
\left(
\|f\|_{L^1(\partial\R^n_+)} |x|^{\alpha-n}
+
\int_{\partial\mathbb R^n_+}
|y|^{\alpha-n}f(y)\,dy
\right)^{t^\prime}
dx
\right)^{1/t^\prime}.
\end{align*}
Applying the Carlson--Levin inequality \eqref{CLI}, and 
using \eqref{Beta Type Function}, it follows that
\begin{align*}
\|\widetilde E_\alpha f\|_{L^{t^\prime}(\R^n_+)}
&\ge
\widetilde {\mathscr C}'
\,
\|f\|_{L^1(\partial\R^n_+)}^{
1-
\frac{q(\alpha-n)}{(n-1)(1-q)}
\left(
1+\frac{n}{t^\prime(\alpha-n)}
\right)
}
\cdot
\|f\|_{L^q(\partial\R^n_+)}^{
\frac{q(\alpha-n)}{(n-1)(1-q)}
\left(
1+\frac{n}{t^\prime(\alpha-n)}
\right)
},
\end{align*}
where
\begin{align*}
\widetilde {\mathscr C}'
:=
\widetilde{\mathscr C}_0
\Biggl[
\frac{\sigma(n)}{2(\alpha-n)}\,
\mathscr L(n-1,\alpha-n,q)^{
t^\prime+\frac{n}{\alpha-n}
}\,
B\!\left(
\frac{n}{\alpha-n},
-t^\prime-\frac{n}{\alpha-n}
\right)
\Biggr]^{1/t^\prime}.
\end{align*}
Since
\[
1-\widetilde\gamma
=
\frac{q(\alpha-n)}{(n-1)(1-q)}
\left(
1+\frac{n}{t^\prime(\alpha-n)}
\right),
\]
we conclude that
\[
\widetilde{\mathscr C}(n,\alpha,q,t)
\ge
\widetilde {\mathscr C}'
>
0.
\]
\end{proof}

\begin{coro} \label{coro-sharp-classification-half-space}
Let $\alpha=n+2$, $\frac{n}{n+2}<t<1$ and $\frac{n-1}{n+1}<q<1$. 
Then the extremal functions for \eqref{Ineq-Main-half-space} are given by
\[
f(x) = \bigl(1+|x|^2\bigr)^{-\frac{1}{1-q}},
\]
up to translations, dilations, and constant multiples.  
Moreover, the sharp constant is
\begin{align}
\widetilde{\mathscr C}(n,n+2,q,t)
={}&
\frac{1}{\pi}
\Biggl[
\left(\frac{q(n+1)-n+1}{2q}\right)^{
\frac{n+2t^\prime}{(n-1)(1-q)}
}
\left(\frac{(n-1)(1-q)}{q(n+1)-n+1}\right)^{
\frac n2+t^\prime
}
\nonumber\\
&\qquad\times
\left(
\frac{
\Gamma\!\left(\frac{1}{1-q}\right)
}{
\Gamma\!\left(\frac{1}{1-q}-\frac{n-1}{2}\right)
}
\right)^{\frac{n+2t^\prime}{n-1}}
\frac{
\Gamma\!\left(-t^\prime-\frac n2\right)
}{
2\Gamma(-t^\prime)
}
\Biggr]^{\frac1{t^\prime}} .
\label{const-sharp-half-space}
\end{align}




\end{coro}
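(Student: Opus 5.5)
By Lemma~\ref{lm-rearrangement-half-space} it suffices to treat nonnegative $f=f^*$ on $\partial\mathbb R^n_+$, i.e.\ radially symmetric and non-increasing in $x'\in\mathbb R^{n-1}$. The plan mirrors Corollary~\ref{coro-extremal-functions}: for $\alpha=n+2$ the kernel is quadratic, so the pointwise lower bound \eqref{Estimate-LB-half-space} becomes an exact identity with constant $1$. Indeed, for $x=(x',x_n)\in\mathbb R^n_+$ and $y\in\partial\mathbb R^n_+$,
\[
|x-y|^2=|x'|^2+x_n^2-2x'\cdot y+|y|^2=|x|^2-2x'\cdot y+|y|^2 .
\]
Since $f$ is radial, $\int_{\partial\mathbb R^n_+} y f(y)\,dy=0$. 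Hence
\[
\widetilde E_{n+2}f(x)=\|f\|_{L^1(\partial\mathbb R^n_+)}|x|^2+\int_{\partial\mathbb R^n_+}|y|^2f(y)\,dy .
\]

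With this identity, $\|\widetilde E_{n+2} f\|_{L^{t'}(\mathbb R^n_+)}$ is an explicit function of $A:=\|f\|_{L^1}$ and $B:=\int|y|^2 f$. I would write it in polar coordinates over the half-space (solid angle $\sigma(n)/2$) and evaluate via \eqref{Beta Type Function} with $\lambda=2$, $r=n/2$, $s=-t'-n/2>0$; this is legitimate because $t'<-n/2$, which follows from $t>n/(n+2)$. The result is
\[
\|\widetilde E_{n+2}f\|_{L^{t'}(\mathbb R^n_+)}^{t'}=\frac{\sigma(n)}{4}\,B\!\left(\tfrac n2,-t'-\tfrac n2\right)A^{-n/2}B^{\,t'+n/2}.
\]
Since $t'<0$ and $t'+n/2<0$, the quantity $\|\widetilde E f\|_{L^{t'}}=c\,A^{-n/(2t')}B^{1+n/(2t')}$ is increasing in $B$, because $1+\frac{n}{2t'}>0$.

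Next I apply the Carlson--Levin inequality \eqref{CLI} in dimension $n-1$ with $\lambda=2$. It requires $q>\frac{n-1}{n+1}$, which is exactly the hypothesis, and gives
\[
B\ge \mathscr L(n-1,2,q)\,\|f\|_q^{\frac{2q}{(n-1)(1-q)}}A^{1-\frac{2q}{(n-1)(1-q)}} .
\]
Substituting this and checking that the resulting exponents of $A$ and $\|f\|_q$ equal $\widetilde\gamma$ and $1-\widetilde\gamma$ for $\alpha=n+2$ (a scaling bookkeeping, as in Proposition~\ref{Prop-SNC-Rn_+}(ii)) yields
\[
\widetilde F[f]\ge\Big[\tfrac{\sigma(n)}{4}B\big(\tfrac n2,-t'-\tfrac n2\big)\mathscr L(n-1,2,q)^{t'+n/2}\Big]^{1/t'} .
\]
Because each step is an equality except Carlson--Levin, equality holds precisely when equality holds in \eqref{CLI}. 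For radial $f$ this means $f=(1+|x|^2)^{-1/(1-q)}$ up to dilation and constant multiple; undoing the rearrangement reduction via the equality case in Lemma~\ref{lm-rearrangement-half-space} adds translations. So the infimum is attained exactly on this family, and the minimizer is admissible since $q>\frac{n-1}{n+1}$ gives $f\in L^1\cap L^q$.

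Finally, the explicit constant \eqref{const-sharp-half-space} follows by inserting $\sigma(n)=2\pi^{n/2}/\Gamma(n/2)$, $B(\frac n2,-t'-\frac n2)=\Gamma(\frac n2)\Gamma(-t'-\frac n2)/\Gamma(-t')$, and the formula for $\mathscr L(n-1,2,q)$, then simplifying the powers of $\pi$ and $2$. I expect the main obstacle to be this last simplification, in particular tracking the factor $\frac12$ from the half-space solid angle and the $\pi$-powers so that they collapse to the prefactor $\frac1\pi$ and the $\frac{1}{2}$ inside the bracket. The structural argument itself is routine once the exact identity for $\widetilde E_{n+2}f$ is in hand.
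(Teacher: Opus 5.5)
Your proposal is correct and follows the route the paper indicates: the paper omits the proof and defers to Corollary~\ref{coro-extremal-functions}, which uses the exact identity $\widetilde E_{n+2}f(x)=\|f\|_{L^1(\partial\mathbb R^n_+)}|x|^2+\int_{\partial\mathbb R^n_+}|y|^2f(y)\,dy$ for radial $f$, evaluates the $L^{t^\prime}$ norm via \eqref{Beta Type Function} (here with the half-sphere factor $\sigma(n)/2$), and reduces sharpness and the classification to the equality case of \eqref{CLI} in dimension $n-1$ with $\lambda=2$. Your constant $\bigl[\frac{\sigma(n)}{4}B(\frac n2,-t^\prime-\frac n2)\,\mathscr L(n-1,2,q)^{t^\prime+n/2}\bigr]^{1/t^\prime}$ does simplify to \eqref{const-sharp-half-space}: the powers of $2$ inside $\mathscr L(n-1,2,q)$ combine into $\bigl(\frac{q(n+1)-n+1}{2q}\bigr)^{\frac{n+2t^\prime}{(n-1)(1-q)}}$, the $\pi$-powers collapse to $\frac1\pi$, and the half-space factor gives the $\frac12$ in $\frac{\Gamma(-t^\prime-n/2)}{2\Gamma(-t^\prime)}$.
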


\begin{proof}
We omit it here since the argument is almost the same as that in Corollary~\ref{coro-extremal-functions} . 
\end{proof}

\subsection{Existence of extremal functions for reverse HLS inequality on \texorpdfstring{$\mathbb{R}^n_+$}{Rn+}}
We next investigate the existence of nonnegative minimizers for \(\widetilde{\mathscr C}(n,\alpha,q,t)\).

\begin{prop}\label{prop-Minimizer-half-space}   
Let $2\le n<\alpha $, $\frac{n}{\alpha}<t<1$, and $\frac{t^\prime(n-1)}{n+t^\prime(\alpha-1)}\le q<1$. 
Then the infimum \(\widetilde{\mathscr C}(n,\alpha,q,t)\) is achieved. 
Moreover, any extremal function is radially symmetric (up to a translation on \(\partial\mathbb R^n_+\)), monotone non-increasing and positive almost everywhere on $\partial\mathbb R^n_+$.
\end{prop}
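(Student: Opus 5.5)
We follow the proof of Proposition~\ref{prop-Minimizer} and Proposition~\ref{prop-positivity-Rn}, replacing $\mathbb R^n$ by $\partial\mathbb R^n_+\cong\mathbb R^{n-1}$ for the functions and by $\mathbb R^n_+$ for the potentials.

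\emph{Endpoint case.} When $q=\frac{t^\prime(n-1)}{n+t^\prime(\alpha-1)}$ we have $\widetilde\gamma=0$. Then \eqref{Ineq-Main-half-space} is the reverse HLS inequality \eqref{ineq-Re-HLS-operator-half-space}, and its extremals are known to exist by \cite{2017-NgoNgu-RH}.

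\emph{Interior case: minimizing sequence.} For the remaining $q$ we have $\widetilde\gamma<0$. Take a minimizing sequence $\{f_j\}$. By Lemma~\ref{lm-rearrangement-half-space} we replace it by $\{f_j^*\}$, so each $f_j$ is radial and non-increasing on $\mathbb R^{n-1}$. Scaling and homogeneity of $\widetilde F$ allow the normalization $\|f_j\|_{L^1}=\|f_j\|_{L^q}=1$. This gives the uniform bound $f_j(r)\le C\min\{r^{-(n-1)/q},r^{-(n-1)}\}$, and Helly's theorem gives $f_j\to f_0$ a.e.

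\emph{Lower semicontinuity.} To get $\|\widetilde E_\alpha f_0\|_{L^{t^\prime}}\le\lim_j\|\widetilde E_\alpha f_j\|_{L^{t^\prime}}$ we argue as follows.
\begin{enumerate}
\item Fatou's lemma gives $\widetilde E_\alpha f_0\le\liminf_j \widetilde E_\alpha f_j$ pointwise on $\mathbb R^n_+$.
\item The lower bound \eqref{Estimate-LB-half-space} together with \eqref{CLI} on $\mathbb R^{n-1}$ (with $\lambda=\alpha-n$) gives the uniform bound $\widetilde E_\alpha f_j(x)\ge \widetilde{\mathscr C}_0(|x|^{\alpha-n}+\mathscr L)$.
\item The majorant $(|x|^{\alpha-n}+c)^{t^\prime}$ is integrable on $\mathbb R^n_+$ because $t^\prime(\alpha-n)<-n$.
\item Since $t^\prime<0$, the reversed monotonicity turns the pointwise inequality into the desired norm inequality.
\end{enumerate}
We do not need a.e. convergence of $\widetilde E_\alpha f_j$ via Helly, because $\widetilde E_\alpha f_j$ is no longer radial in $\mathbb R^n_+$. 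Instead we apply Fatou's lemma to the nonnegative functions $(M(x))^{t^\prime}-(\widetilde E_\alpha f_j)^{t^\prime}$. Together with $\limsup_j(\widetilde E_\alpha f_j)^{t^\prime}\le(\widetilde E_\alpha f_0)^{t^\prime}$ pointwise, this yields $\limsup_j\int(\widetilde E_\alpha f_j)^{t^\prime}\le\int(\widetilde E_\alpha f_0)^{t^\prime}$.

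\emph{Strong $L^q$ convergence (main obstacle).} We need $\|f_0\|_{L^q}=1$, while Fatou's lemma only gives $\|f_0\|_{L^1}\le1$. Proceed in three steps.
\begin{enumerate}
\item Since $\widetilde E_\alpha f_j(x',0)\ge\widetilde E_\alpha f_j(0)=\int|y|^{\alpha-n}f_j$, which requires monotonicity of $\widetilde E_\alpha f_j$ in $|x'|$ and $x_n$, the moment $\int|y|^{\alpha-n}f_j$ is finite. By \eqref{CLI}, $f_j\in L^s$ for each $s\in(\frac{n-1}{\alpha-1},q)$.
\item Proposition~\ref{Prop-SNC-Rn_+}(ii) at exponent $s$, with $1-\widetilde\gamma(s)>0$, together with the uniform bound on $\|\widetilde E_\alpha f_j\|_{L^{t^\prime}}$, shows that $\|f_j\|_{L^s}$ is bounded uniformly in $j$.
\item Hence $f_j\le C\min\{|x'|^{-(n-1)/s},|x'|^{-(n-1)}\}$. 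This majorant lies in $L^q(\mathbb R^{n-1})$ because $s<q<1$, and dominated convergence gives $\|f_0\|_{L^q}=1$.
\end{enumerate}
Since $\widetilde\gamma<0$, the chain $\widetilde{\mathscr C}\ge\|\widetilde E_\alpha f_0\|\ge\widetilde F[f_0]\ge\widetilde{\mathscr C}$ shows that $f_0$ is a minimizer.

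\emph{Properties of extremals.} Radial symmetry up to translation and monotonicity follow from the equality case in Lemma~\ref{lm-rearrangement-half-space}. Positivity a.e. follows from the perturbation $f+\varepsilon\mathbf 1_E$ exactly as in Proposition~\ref{prop-positivity-Rn}:
\begin{itemize}
\item the denominator gains a factor $1+c\varepsilon^q$;
\item the numerator changes by $O(\varepsilon)$, using $\widetilde E_\alpha\mathbf 1_E\le C_1(1+|x|)^{\alpha-n}\le C_2\widetilde E_\alpha f$ on $\mathbb R^n_+$.
\end{itemize}
Since $O(\varepsilon)=o(\varepsilon^q)$ for $0<q<1$, this contradicts minimality.
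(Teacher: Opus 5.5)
Your proposal is correct and follows the paper's proof essentially step for step. The shared steps are: the endpoint $\widetilde\gamma=0$ taken from Ng\^o--Nguyen; rearrangement plus the normalization $\|f_j\|_{L^1}=\|f_j\|_{L^q}=1$; Helly; the uniform $L^s$ bound from the rough inequality at some $s\in(\frac{n-1}{\alpha-1},q)$, which gives dominated convergence in $L^q$; the closing chain using $\widetilde\gamma<0$; and the $\varepsilon^q$-versus-$O(\varepsilon)$ perturbation for positivity.

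The only deviation is the lower-semicontinuity step. There the paper just says ``as in Step~2'' of the whole-space proof, which applies Helly to the radial potentials $E_\alpha f_j$. You instead apply reverse Fatou against the integrable majorant $M^{t^\prime}$. This is a genuine improvement: $\widetilde E_\alpha f_j$ is not radial on $\mathbb R^n_+$, and your argument needs no pointwise convergence of the potentials.
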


\begin{proof}
If $q=\frac{t^\prime(n-1)}{n+t^\prime(\alpha-1)}$, then $\gamma=0$. This is the case due to \cite{2017-NgoNgu-RH}.

In the following we always assume
\[
\frac{t^\prime(n-1)}{n+t^\prime(\alpha-1)}<q<1.
\]
Then \(\widetilde\gamma<0\).

We consider the minimization problem \eqref{mini-problem-half-space}
and let \(\{f_j\}\) be a minimizing sequence, namely,
\[
\lim_{j\to\infty}\widetilde F[f_j]
=
\widetilde{\mathscr C}(n,\alpha,q,t).
\]
For convenience, we divide the proof into three steps.

\medskip
\textbf{\textit{Step 1.}} Construction of a candidate minimizer. 

By Lemma~\ref{lemma-rearrangement}~and~Lemma \ref{lm-rearrangement-half-space}, we may assume that each \(f_j\) is nonnegative, radially symmetric, and non-increasing on \(\partial\mathbb R^n_+\).
Using the scaling invariance and homogeneity of \(\widetilde F\), we normalize the sequence so that
\[
\|f_j\|_{L^1(\partial\mathbb R^n_+)}
=
\|f_j\|_{L^q(\partial\mathbb R^n_+)}
=
1,
\ \ 
\forall j\in\mathbb N.
\]
Arguing exactly as in Proposition~\ref{prop-Minimizer}, we obtain
\begin{equation}\label{estimate-minimizing-sequence-half-space}
f_j(r)
\le
C\min\left\{
r^{-\frac{n-1}{q}},
\,r^{-n+1}
\right\},
\ \ 
\forall r>0,
\end{equation}
where \(C>0\) is independent of \(j\).
By Helly's selection theorem, after passing to a subsequence if necessary, there exists a measurable function \(f_0\) such that
\[
f_j\to f_0
\ \ \text{a.e. on }\partial\mathbb R^n_+.
\]
Moreover, $f_0$ is nonnegative, radially symmetric, and non-increasing on \(\partial\mathbb R^n_+\).

\medskip
\textbf{\textit{Step 2.}} Proof of the inequality

\[
\|\widetilde E_\alpha f_0\|_{L^{t^\prime}(\mathbb R^n_+)}
\le
\lim_{j\to\infty}
\|\widetilde E_\alpha f_j\|_{L^{t^\prime}(\mathbb R^n_+)}.
\]
It can be obtained by an argument as that in Step~2 of the proof of Proposition~\ref{prop-Minimizer}.

\medskip
\textbf{\textit{Step 3.}} Attainment of the infimum.


As that in Proposition~\ref{prop-Minimizer}, for any $s\in\left(\frac{n-1}{\alpha-1},q\right)$,  Carlson--Levin inequality \eqref{CLI} implies that $f_j\in L^s(\partial\mathbb{R}^n_+)$.
Then applying the rough inequality \eqref{Ineq-Main-half-space}, we obtain
\[
\|\widetilde E_\alpha f_j\|_{L^{t^\prime}(\R^n_+)}
\ge
\widetilde{\mathscr{C'}}\|f_j\|_{L^s(\partial\R^n_+)}^{\,1-\widetilde\gamma(s)},
\]
where $1-\widetilde\gamma(s)>0$. 
Consequently, $\{f_j\}$ is uniformly bounded in $L^s(\partial\mathbb{R}_+^n)$. Then
\[
f_j(|x|)
\le
C|x|^{-\frac{n-1}{s}},
\ \ 
\forall j\in\mathbb N.
\]
Combining this estimate with \eqref{estimate-minimizing-sequence-half-space}, we deduce that
\[
f_j(|x|)
\le
C\min\left\{
|x|^{-\frac{n-1}{s}},
\,|x|^{-n+1}
\right\}
\in
L^q(\partial\mathbb R^n_+).
\]
Therefore, the dominated convergence theorem yields
\[
\|f_0\|_{L^q(\partial\mathbb R^n_+)}
=
\lim_{j\to\infty}
\|f_j\|_{L^q(\partial\mathbb R^n_+)}
=
1.
\]
The properties of extremal functions follow as in the proof of Proposition~\ref{prop-positivity-Rn}.

\end{proof}

\begin{rem} \label{rem-semi-continuous-half-space}
The map $q\mapsto \widetilde{\mathscr{C}}(n,\alpha,q,t)$ is upper semi-continuous.
In particular,
\[
\lim_{q\to\left(\frac{n-1}{\alpha-1}\right)^+}
\widetilde{\mathscr C}(n,\alpha,q,t)
=
0.
\]
\end{rem}

\subsection{Inequalities for \texorpdfstring{$q\ge1$}{q>1} on \texorpdfstring{$\mathbb{R}^n_+$}{Rn+}}
The proofs of Theorems~\ref{them-half-space-q-bigger-1} and \ref{them-half-space-q=1-log} are analogous to those of Theorems~\ref{them-Rn-q-bigger-1} and \ref{them-Rn-q=1-log} respectively.
We therefore omit the details.

Similarly, the corresponding inequalities \eqref{ineq-main-half-space-q-bigger-1} and \eqref{ineq-main-half-space-log} can also be derived from the Carlson--Levin inequality.
In particular, when $\alpha=n+2$, the sharp constant can be computed explicitly.

 \vskip 1cm
\noindent {\bf Acknowledgements}\\
\noindent

The project is supported by  the
National Natural Science Foundation of China (Grant Nos. 12371119, 12271436).




\bibliographystyle{elsarticle-num-Z2}
\bibliography{Zcite_260808}

\end{document}